\DeclareMathOperator{\rank}{rank}
\DeclareMathOperator{\Pic}{Pic}
\DeclareMathOperator{\Hom}{Hom}
\DeclareMathOperator{\Gal}{Gal}
\theoremstyle{plain}
\newtheorem{theorem}{Theorem}[section]
\newtheorem{lemma}[theorem]{Lemma}
\newtheorem{proposition}[theorem]{Proposition}
\newtheorem{corollary}[theorem]{Corollary}
\newtheorem{conjecture}[theorem]{Conjecture}
\numberwithin{equation}{section} \numberwithin{figure}{section}
\theoremstyle{definition}
\newtheorem{definition}[theorem]{Definition}
\newtheorem{example}[theorem]{Example}
\newtheorem{question}[theorem]{Question}
\theoremstyle{remark}
\newtheorem{remark}[theorem]{Remark}
\numberwithin{table}{section}
\newcommand{\dan}[1]{{\color{blue}{[Dan: #1]}}}
\newcommand{\Z}{{\mathbb Z}}
\renewcommand{\O}{{\mathcal O}}
\newcommand{\Q}{{\mathbb Q}}	
\newcommand{\R}{{\mathbb R}}
\newcommand{\F}{{\mathbb F}}
\newcommand\GG{\mathbb{G}}
\newcommand\Ga{\GG_\mathrm{a}}
\newcommand\Gm{\GG_\mathrm{m}}
\renewcommand{\P}{{\mathbb P}}	
\renewcommand{\H}{\mathrm{H}}
\renewcommand{\leq}{\leqslant}
\renewcommand{\geq}{\geqslant}
\begin{document}
\thispagestyle{empty}
	
\title{Fermat near misses and the integral Hilbert property}
\date{\today}

\author[J. Alessandrì]{Jessica Alessandr\`i}
\address{Max Planck Institute for Mathematics\\
Vivatsgasse 7\\
53111 Bonn\\
Germany}
\email{alessandri@mpim-bonn.mpg.de}
\urladdr{https://sites.google.com/view/jessicaalessandri}

\author[D. Loughran]{Daniel Loughran}
\address{Department of Mathematical Sciences \\
University of Bath \\
Claverton Down \
Bath\\ 
BA2 7AY\\
UK.}
\urladdr{https://sites.google.com/site/danielloughran/}

\subjclass[2020]{Primary 14G05; Secondary 11D25, 14J20, 14J26}

\renewcommand{\thefootnote}{\arabic{footnote}}
\setcounter{footnote}{0}

\begin{abstract}
    We consider the Diophantine equation $x^4 + y^4 - w^2 = n$ for $n \in \Z$, which is related to near misses for the quartic case of Fermat's Last Theorem. For certain $n$ we show that the set of solutions is infinite, or more generally \emph{not thin}. Our approach is via the geometry of del Pezzo surfaces of degree $2$, and we prove a more general result on non-thinness of integral points on double conic bundle surfaces.
\end{abstract}

\maketitle
\tableofcontents

\medskip
\section{Introduction}

\subsection{Fermat near misses}
Which integers $n$ are a sum of three integer cubes
$$x^3 + y^3 + z^3 = n \; ?$$
The case $n = 0$ is a special case of Fermat's Last Theorem; for more general $n$ one can view a solution as a ``Fermat near miss''. A conjecture of Heath--Brown \cite{HB92} predicts that the equation has a solution whenever $n \not \equiv 4,5 \bmod 9$. Moreover, if a solution exists, then he conjectured that there are infinitely many solutions. Despite much study of this problem, including recent important computational work finding large solutions for various integers $n$ \cite{Boo19, BS21}, this conjecture is wide open in general. Infinitude is only known when $n$ or $2n$ is a cube.

We consider analogues for more general families of equations:
$$F(x,y,z) = n$$
where $F$ is weighted homogeneous in $x,y,z$ with weights $p,q,r$, respectively. To have a similar structure to sums of three cubes we require that $p + q + r = \deg F$.
We consider the next simplest possible weights $(p,q,r) = (1,1,2)$, giving an equation of weighted degree $4$. We also consider diagonal equations, being indefinite to allow the possibility for infinitely many solutions. This leads us to consider
\begin{equation} \label{def:Fermat}
    x^4 + y^4 - w^2 = n, \quad n \in \Z.
\end{equation}
One can also view solutions to this equation as a kind of ``Fermat near miss'', as the equation $x^4 + y^4 = w^2$ plays a key role in the quartic case of Fermat's Last Theorem.

These equations \eqref{def:Fermat} have been recently been studied in \cite{Eliashar2023An=4}, where it was shown, rather surprisingly, that there are infinitely many solutions for $n = 8$ (the case $n = -1$ appears on Mathematics Stack Exchange and MathOverflow \cite{Tito, MO} as a challenge problem, currently open). The solutions for $n=8$ are constructed in \cite{Eliashar2023An=4} via a complicated recursive sequence. We put this argument into a geometric context, which allows us to prove existence of infinitely many solutions for a wide list of examples.
More than this,  we are able to show that the integral points are \emph{not thin} (see Definition \ref{def:IHP}); this implies for example that they are Zariski dense.

\begin{theorem} \label{thm:Fermat}
    Let $n \in \Z$ and consider the affine surface 
    $$U_n: \quad x^4 + y^4 - w^2 = n.$$   
    Then $U_n(\Z)$ is not thin provided there exists $m \in \Z$ and $(x,y,w) \in U_n(\Z)$ such that
    one of the following holds.
    \begin{enumerate}
        \item $n = m^4$ or $n = -4m^4$.
        \item $n = m^2$ is a square and $st(s+t)(s-t) > 0$, where
        $s = x^2-m, t = w - y^2$.
        \item $n = 2m^2$ is twice a square and $st\neq 0$ where $s = xy-m$, $t= x^2 - y^2-w$.
        \item $n = -2m^2$ is minus twice a square and $4s^4 - 12s^2t^2+t^4 >0$ and $st \neq 0$, where $s = xy-m$, $t= x^2 + y^2-w$.
    \end{enumerate}
\end{theorem}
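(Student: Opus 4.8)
The plan is to exhibit each affine surface $U_n$ as an open subset of a del Pezzo surface of degree $2$ carrying two conic bundle fibrations, and then to invoke the general non-thinness criterion for double conic bundle surfaces established earlier in the paper. First I would compactify: introducing a weight-$1$ variable $z$ embeds $U_n$ as the affine chart $\{z=1\}$ of the degree-$4$ hypersurface $X_n\colon w^2 = x^4 + y^4 - n z^4$ in $\P(1,1,1,2)$. For all but finitely many $n$ this $X_n$ is a smooth del Pezzo surface of degree $2$, and $U_n$ is the complement of the anticanonical boundary $\{z=0\}$, namely the quartic Fermat curve $x^4 + y^4 = w^2$; the excluded singular values of $n$ I would treat separately.

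The crux is the second step: producing the two conic bundles. In each of the four cases the arithmetic shape of $n$ forces a factorisation of the defining polynomial as a product of two quadratics equal to another such product, and each of the two ways of pairing the four factors yields a conic bundle $U_n \to \mathbb{A}^1$ whose fibre over $\lambda$ is obtained by setting the relevant ratio of factors equal to $\lambda$ and eliminating $w$. Concretely, for $n = m^2$ one writes $(w - y^2)(w + y^2) = (x^2 - m)(x^2 + m)$; for $n = -4m^4$ the Sophie Germain identity $y^4 + 4m^4 = (y^2 - 2my + 2m^2)(y^2 + 2my + 2m^2)$ supplies the factorisation of $w^2 - x^4$; and for $n = \pm 2m^2$ one uses $x^4 + y^4 - w^2 = (x^2 \mp y^2 - w)(x^2 \mp y^2 + w) \pm 2x^2 y^2$ to reach a factorisation in exactly the linear forms whose values at the given point are the quantities $s$ and $t$ of the statement. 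The two distinct pairings of the four factors furnish the \emph{double} conic bundle structure needed to sweep out a genuinely two-dimensional family of points.

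Third, I would check the hypotheses of the general theorem at the supplied integral point $(x,y,w)$. By construction $s$ and $t$ are the values there of the two distinguished linear factors, so they determine the fibres of the two bundles through the point and detect whether those fibres are indefinite Pell-type conics carrying infinitely many integral points. The inequalities $st(s+t)(s-t) > 0$, $st \neq 0$, and $4s^4 - 12 s^2 t^2 + t^4 > 0$ are precisely the sign and non-degeneracy conditions placing the starting point in the correct real component and away from the degenerate fibres and the boundary, so that iterating the unit action along one fibre and then moving transversally via the second bundle produces an orbit of integral points satisfying the genericity requirement of the criterion. Non-thinness of $U_n(\Z)$ then follows from the general double conic bundle result; case (1), where $x^4 - m^4$ and $y^4 - m^4$ factor further so that the bundles acquire sections, is the degenerate situation requiring no inequality.

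I expect the main obstacle to be this last step: translating the explicit algebraic inequalities into the geometric input of the general theorem—verifying that each pairing really defines a conic bundle with non-split generic fibre, that the starting fibre carries a real point in the component guaranteeing infinitely many integral points, and that the finitely many degenerate or singular fibres together with the boundary $\{z=0\}$ do not obstruct the propagation. Confirming smoothness of $X_n$ and disposing of the finitely many bad values of $n$ is a secondary technical matter.
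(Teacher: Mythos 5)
Your overall strategy is the same as the paper's: compactify $U_n$ to the degree-$2$ del Pezzo surface $X_n \subset \P(1,1,1,2)$ with smooth anticanonical boundary $z=0$, exhibit two conic bundle structures (dual to each other), and feed the given integral point into the general double-conic-bundle criterion (Theorem \ref{thm:dP}). Your factorisation heuristic is in fact a genuinely nicer route to the fibrations than the paper's: the pencils you describe — e.g.\ $(w-y^2)(w+y^2)=(x^2-mz^2)(x^2+mz^2)$ for $n=m^2$, the Sophie Germain pairing for $n=-4m^4$, and $(x^2\mp y^2-w)(x^2\mp y^2+w)=\mp 2(xy-mz^2)(xy+mz^2)$ for $n=\pm 2m^2$ — coincide exactly with the conic bundles the paper constructs in Proposition \ref{prop:conic_bundle_equation} via Galois-invariant collections of lines, cohomology of line bundles and Magma computations, and the two pairings of factors do give the dual pair (the Geiser involution $w\mapsto -w$). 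You would still have to check that these pencils define genuine conic bundle morphisms (the paper invokes Lemma \ref{lem:conic_bundle_exists} for this: the common zero locus of the two quadrics is a pair of lines, and the moving part of the pencil is the conic class), and you would have to carry out the verifications you defer: that the inequality is exactly the condition that the fibre through the point meets $D:z=0$ in a \emph{real} point, that this boundary point is irreducible (degree $2$, not a pair of rational points), that the point lies on a \emph{smooth} fibre, and that $\pi_2|_D$ is unramified along $C\cap D$. The paper does all of this in Lemma \ref{lem:Fermat}, the smoothness and irreducibility via the explicit discriminants of the conic bundles, and the unramifiedness by the observation that $D$ is preserved by the Geiser involution. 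These are computations your framework can absorb, so for cases (2)--(4) your plan is essentially the paper's proof.

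The genuine gap is case (1). Your claim that this is ``the degenerate situation requiring no inequality'' because the factors split further and ``the bundles acquire sections'' does not prove anything, and the reasoning would in fact fail as stated: for $n=m^4$ (resp.\ $n=-4m^4$) the obvious integral points such as $(m,0,0)$ (resp.\ $(0,0,2m^2)$) have $s=t=0$, i.e.\ they lie on the base locus of the pencil/on lines of the surface, so they cannot be used as the starting point of the double fibration method; moreover, when $m'=m^2$ is itself a square the $\Q$-singular fibres of the bundle become split, so even the paper's argument that every integral point lies on a smooth fibre breaks down in this case, and extra sections of the bundles are irrelevant to \emph{integral} points. What the paper actually does is exhibit specific non-trivial ``good'' points — $(5,7,55)$ for $n=1$ and $(2,2,6)$ for $n=-4$ from Table \ref{tab:solutions} — check directly that they lie on smooth fibres meeting $D$ in a real quadratic point, and then deduce the general case $n=m^4$, $n=-4m^4$ by the fourth-power scaling $(x,y,w)\mapsto (mx,my,m^2w)$, which embeds a non-thin set into $U_{m^4}(\Z)$. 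Without producing such explicit points (or an argument that good points exist), case (1) of the theorem remains unproven in your proposal.
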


Any solution for some $n$, yields a solution for $nq^4$ for any $q \in \Z$; so without loss of generality one can assume that $n$ is fourth power free. Theorem~\ref{thm:Fermat} gives indirect evidence for Heath--Brown's conjecture that any integer with at least one representation as a sum of three cubes has infinitely many. Theorem \ref{thm:Fermat} applies to some non-rational surfaces (see Remark \ref{rem:non_rational}), but the only case of sums of three cubes for which Zariski density is known is the Fermat cubic surface, which is rational.

\begin{table}[htb] 
    \centering
    \begin{tabular}{c|c|c|c|c|c|c} 
        $n$ & $1$ & $2$ & $-2$ & $-4$ & $8$  & $25$   \\  \hline
        $(x,y,w)$ & $(5,7,55)$ & $(15,33,1112)$ & $(47,39,2682)$ & $(2,2,6)$ & $(3,6,37)$ & $(5,5,35)$ 
    \end{tabular}
    \caption{Some solutions satisfying the hypotheses of Theorem~\ref{thm:Fermat}}
    \label{tab:solutions}
\end{table}

We do not know in general whether the equations \eqref{def:Fermat} have a solution, and there is no reason to expect this to be easier than sums of three cubes (some simple families do admit obvious solutions, e.g.~if $-n$ is a square). Hence a natural approach is computational. Our theorem says that provided one is able to find a ``good'' solution, then there are very many solutions. Good in the context of Theorem \ref{thm:Fermat} means a solution satisfying some real conditions; these real conditions describe unbounded domains, hence we expect very many such solutions if they exist.

\begin{remark} \label{rem:trivial}
    There are some ``trivial'' integer solutions for the cases $n = \pm 2m^2$. Let $xy = m$. Then we can rewrite the equation as $x^4 \mp 2x^2y^2 + y^2 = w^2$, which rearranges to $(x^2 \mp y^2)^2 = w^2$. Thus we obtain a solution by taking $w = x^2 \mp y^2$. These solutions are not ``good'' since they clearly satisfy $s= 0$ in the notation of Theorem \ref{thm:Fermat}. However, in the case $n = 2m^2$, Theorem \ref{thm:Fermat} applies as soon as one can find one non-trivial integer solution. 
\end{remark}

\subsection{Conic bundles}
We prove Theorem \ref{thm:Fermat} using geometric techniques. The natural compactifications of the surfaces $U_n$ lie in weighted projective space $\P(1,1,1,2)$, and define \emph{del Pezzo surfaces of degree $2$}. The arithmetic of del Pezzo surfaces is a rich very and ancient topic, with lower degree surfaces representing a greater challenge than higher degree surfaces. Our key observation is that the surfaces in Theorem \ref{thm:Fermat} admit a \emph{conic bundle structure}; this is not at all obvious from the equation and finding these structures and writing them down explicitly requires some detailed geometric arguments.  

A \textit{conic} on a del Pezzo surface $X$ is a connected reduced curve $C \subset X$ of arithmetic genus $0$ such that $-K_X \cdot C = 2$. A single conic moves in a pencil and gives rise to a conic bundle structure $\pi: X \to \P^1$ (see \cite[Lem.~2.8]{LS22}). Moreover if the degree of $X$ equals $1,2,4$, then there is actually a second associated conic bundle, which we call the \emph{dual} of $\pi$ (see Lemma \ref{lem:Str2conic}). For the surfaces in Theorem~\ref{thm:Fermat}, the dual is obtained by applying the Geiser involution $w \mapsto -w$. Given this, Theorem \ref{thm:Fermat} is then a special case of the following. 

\begin{theorem} \label{thm:dP}
    Let $X$ be a del Pezzo surface over $\Q$ of degree $d \in \{1,2,4\}$.
	Let $D \subset X$ be a smooth anticanonical divisor and $\mathcal{X}$ a projective model for $X$ over $\Z$ and $\mathcal{D}$ the closure of $D$ in $\mathcal{X}$. 

    Assume that $X$ admits a conic bundle structure $\pi_1: X \to \P^1$ with dual $\pi_2:X \to \P^1$.
    Assume that $(\mathcal{X} \setminus \mathcal{D})$ admits an integral point which lies on a smooth fibre $C$ of $\pi_1$ which meets $D$ in a real quadratic point such that $\pi_2|_D: D \to \P^1$ is unramified along $C \cap D$. Then $(\mathcal{X} \setminus \mathcal{D})(\Z)$ is not thin.
\end{theorem}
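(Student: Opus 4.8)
The plan is to combine a norm-torus (Pell) mechanism on the fibres of $\pi_1$ with the dual fibration $\pi_2$ to manufacture a genuinely two-dimensional supply of integral points, and then to rule out thin sets of both kinds. Write $U=\mathcal{X}\setminus\mathcal{D}$. A smooth fibre $C$ of $\pi_1$ satisfies $-K_X\cdot C=2$, so $C\cap D$ is a single closed point of degree $2$; removing it, the open fibre $C^\circ=C\setminus D$ is a smooth affine conic carrying the given integral point, hence is a torsor under the one-dimensional torus $R^1_{K/\Q}\Gm$, where $K=\Q(C\cap D)$ is the \emph{real} quadratic residue field of the point $C\cap D$. By Dirichlet's unit theorem this torus has rank $1$, so the group of norm-one units acts on $P_0\in C^\circ(\Z)$ with infinite orbit; after passing to a finite-index subgroup to respect the integral model, this gives an infinite Pell family $\{P_i\}_{i\in\Z}\subset C^\circ(\Z)$.

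I would then use the dual bundle to leave the curve $C$. By the numerical description of the dual (Lemma~\ref{lem:Str2conic}) the restriction $\pi_2|_C\colon C\to\P^1$ is a finite morphism of degree $8/d\ge 2$, so $b_i':=\pi_2(P_i)$ is an infinite sequence in $\P^1(\Q)$, and each $P_i$ lies on the dual fibre $C_i':=\pi_2^{-1}(b_i')$; but all the $P_i$ still lie on the single $\pi_1$-fibre $C$, so this step by itself only records points on one curve. To escape $C$ I would iterate the construction along $\pi_2$: since $\pi_2|_D\colon D\to\P^1$ is a degree-$2$ cover of the genus-$1$ curve $D$ ramified at four points, its branch locus is a separable quartic $\Delta_2$; as $\Delta_2$ is a nonzero polynomial, $\Delta_2(b_i')\neq0$ for all but finitely many $i$, so $C_i'$ meets $D$ in a genuine quadratic point with residue field $\Q(\sqrt{\Delta_2(b_i')})$, the unramifiedness hypothesis guaranteeing that this intersection is transverse and nonsplit for the starting fibre. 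For infinitely many $i$ this field is real quadratic — this is where a real-positivity input enters, encoded by the sign conditions such as $st(s+t)(s-t)>0$ in Theorem~\ref{thm:Fermat} asserting $\Delta_2>0$ on an unbounded real domain — and applying the first step to each such good $C_i'$ produces a Pell family $\{Q_{i,j}\}_j$. The doubly-indexed set $\{Q_{i,j}\}$ now meets infinitely many fibres of both $\pi_1$ and $\pi_2$ and is Zariski dense; since the type-I part of any thin set is a fixed finite union of curves, it contains at most finitely many of the fibres carrying these families, so all but finitely many of the $\{Q_{i,j}\}_j$ survive and the problem reduces to ruling out type-II covers.

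The remaining and essential task is to defeat thin sets of type~II, and here I would invoke a fibration criterion for the integral Hilbert property in the spirit of Corvaja--Zannier and Demeio. The hypothesis to verify is that integral points occur on a \emph{non-thin} set of \emph{good} fibres, namely those for which the norm torus has positive rank. Crucially, the locus of good base points is itself non-thin: the condition ``real quadratic'' is the real, unbounded domain $\{\Delta>0\}$, while the excluded locus $\{\Delta\in(\Q^\times)^2\}$ is thin, so their difference is non-thin; the unboundedness of these domains (noted in the introduction) is precisely what lets the two-directional bootstrapping above deposit integral points on a non-thin subset of good fibres, rather than on a merely sparse sequence of them.

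The mechanism against an individual type-II cover $\phi\colon Y\to X$ of degree $\ge 2$ is that its restriction to a smooth fibre $C_b^\circ\cong\Gm$ (geometrically) is a cover of a one-dimensional torus, hence up to translation a Kummer cover of bounded degree, so the points of $C_b^\circ$ it captures satisfy an $n$-th power condition inside $K_b$. As $b$ ranges over the non-thin set of good base points, the quadratic fields $K_b=\Q(\sqrt{\Delta(b)})$ run through infinitely many distinct real quadratic fields (a non-constant separable $\Delta$ realises infinitely many square classes), and no fixed finite family of Kummer compatibilities can persist across all of them; the unramifiedness of $\pi_2|_D$ is what keeps each $C_b\cap D$ a genuine nonsplit quadratic point and the torus nontrivial, so these fibrewise covers stay genuinely Kummer. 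I expect this uniform control of the fibrewise covers as $K_b$ varies — equivalently, verifying the fibration criterion's hypothesis that the fibre points are not eventually trapped in a subcover — to be the delicate heart of the proof; by contrast Dirichlet's theorem, the intersection-theoretic degree computations on $X$, and the real positivity estimates should be routine.
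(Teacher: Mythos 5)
Your opening step is sound and agrees with the paper: since $C\cap D$ is an irreducible real quadratic point, $C^\circ$ is a torsor under a rank-one norm torus and $C^\circ(\Z)$ is infinite (this is Lemma~\ref{lem:pointsatinftyconic}). After that, there are genuine gaps. First, the propagation step: you assert that infinitely many of the dual fibres $C_i'$ meet $D$ in a \emph{real} quadratic point, and you attribute this to the sign conditions of Theorem~\ref{thm:Fermat}. Those conditions concern only the starting solution; they say nothing about $\Delta_2(b_i')$ for the iterated fibres. The actual mechanism (Proposition~\ref{prop:v_adic_approximation} combined with the cylinder argument in Lemma~\ref{lem:Zariski_dense}) is that the Pell orbit, being the orbit of a discrete subgroup inside the compact set $C(\R)$, accumulates at $(C\cap D)(\R)$; since $\pi_2|_D$ is unramified along $C\cap D$, it is a local homeomorphism there, so dual fibres through integral points close enough to the boundary again meet $D$ in real points. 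This is recoverable from what you wrote, but it is the engine of the whole proof and it is missing. Relatedly, you conflate the set of base points with $\Delta(b)>0$ (real quadratic boundary) with the set of base points whose fibres carry infinitely many \emph{integral} points: the fibration criterion needs non-thinness of the latter set (the sets $A_1,A_2$ of the paper), and a fibre with real quadratic boundary may contain no integral point at all. Proving non-thinness of $A_1,A_2$ (Proposition~\ref{prop:A_not_thin}) requires the Siegel-type lifting-finiteness result Lemma~\ref{lem:conic_finiteness}, not the observation that $\{\Delta>0\}\setminus\{\Delta\in(\Q^\times)^2\}$ is non-thin.

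The more serious problem is your mechanism against type-II covers, which is not a gap but a wrong idea. Nowhere do you use simple connectedness of $X\setminus(D\cup E)$ --- yet in the setting of Theorem~\ref{thm:dP} this is precisely what $d\in\{1,2,4\}$ and the duality $-(4/d)K_X=C+C'$ buy (Lemma~\ref{lem:noconstant} together with the fact that the complement of a smooth anticanonical divisor is simply connected). Your claim that ``no fixed finite family of Kummer compatibilities can persist'' as $K_b$ varies fails: if $X\setminus(D\cup E)$ admitted a nontrivial \'etale double cover, its pullback would restrict on \emph{every} good fibre to an isogeny of the norm torus and would capture a finite-index subgroup of $C_b^\circ(\Z)$ on every fibre, uniformly in $K_b$ --- the condition ``is a square in the unit group'' makes sense simultaneously in all these fields, so variation of $K_b$ rules out nothing. (Indeed, if your mechanism worked it would also prove non-thinness for cubic surfaces with two skew lines, which the paper explicitly cannot do precisely because the relevant complement is not simply connected; see Lemma~\ref{lem:cubic_lines} and Question~\ref{ques:cubic}.) Moreover, the restriction of an arbitrary cover $\phi:Y\to X$ to a fibre need not be a Kummer cover at all: it can ramify inside $C_b^\circ$, and such covers are handled in the paper by Siegel-type finiteness (Lemma~\ref{lemma:5.3}, Lemma~\ref{lem:conic_finiteness}). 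The correct argument, in the paper's proof of Theorem~\ref{thm:generalisation}, splits covers into $\pi_i$-ramified and $\pi_i$-unramified, uses simple connectedness to show no cover is unramified for both fibrations, and then plays the two fibrations against each other: one finds a point lying on a $\pi_2$-fibre on which $\pi_1$-unramified (hence $\pi_2$-ramified) covers capture only finitely many integral points, yet on a $\pi_1$-fibre all of whose integral points do lift to them. Your proposal contains no substitute for this interplay.
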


By definition $-K_X \cap C$ is a finite scheme of degree $2$; our hypotheses in Theorem~\ref{thm:dP} require this to be irreducible and have a real point. This is fairly natural to impose, otherwise the conic has only finitely integral points (see Lemma \ref{lem:pointsatinftyconic}). More philosophically, real conditions are frequently required in the study of integral points on varieties. Our condition implies that $D(\R) \neq \emptyset$; this condition is necessary for the result to hold. For example, for the equation
$$x^4 + y^4 + w^2 = n$$
there are clearly only finitely many integral solutions. In this case the relevant boundary divisor $x^4 + y^4 + w^2 = 0$ has no real point. The technical unramified condition is fairly mild as it holds for all but finitely many conics (see Lemma \ref{lem:meet_D_unramified}); in fact it is so rare that it is automatically satisfied in the situation of Theorem \ref{thm:Fermat} (see Lemma \ref{lem:Fermat}).

We have an analogous result for cubic surfaces. Recall that any conic bundle on a smooth cubic surface arises via taking the residual intersections of the family of planes through a fixed  line.

\begin{theorem} \label{thm:cubic}
    Let $X$ be a smooth cubic surface over $\Q$
	and $D \subset X$ be a smooth hyperplane section. Let $\mathcal{X}$ be a projective model for $X$ over $\Z$ and $\mathcal{D}$ the closure of $D$ in $\mathcal{X}$. 

    Assume that $X$ admits two lines and let $\pi_1,\pi_2$ denote the corresponding conic bundles. 
    Assume that $\mathcal{X} \setminus \mathcal{D}$ admits an integral point which lies on a smooth fibre $C$ of $\pi_1$ which meets $D$ in a real quadratic point such that $\pi_2|_D: D \to \P^1$ is unramified along $C \cap D$. Then $(\mathcal{X} \setminus \mathcal{D})(\Z)$ is Zariski dense.

    If moreover the lines are coplanar, then $(\mathcal{X} \setminus \mathcal{D})(\Z)$ is not thin.
\end{theorem}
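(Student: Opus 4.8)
The plan is to reduce the statement to an analysis of the two conic bundles $\pi_1,\pi_2$ attached to the two lines, and to treat the coplanar case as an instance of the double conic bundle situation underlying Theorem~\ref{thm:dP}. Write $H=-K_X$ for the hyperplane class, so that $D\equiv H$ is a smooth genus $1$ curve. If $\ell_1,\ell_2$ are the two lines, then the smooth fibres of $\pi_i$ are the residual conics $C\equiv H-\ell_i$, with $-K_X\cdot C = H\cdot(H-\ell_i)=2$ as required, and the restriction $\pi_i|_D\colon D\to\P^1$ has degree $D\cdot C = 2$, hence is a double cover of $\P^1$ by an elliptic curve, ramified over $4$ points. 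The interaction of the two bundles is governed by $\ell_1\cdot\ell_2$: two fibres satisfy $C_1\cdot C_2 = (H-\ell_1)(H-\ell_2) = 1+\ell_1\cdot\ell_2$. Coplanar lines meet, so $\ell_1\cdot\ell_2=1$ and $C_1\cdot C_2=2$, whence $\phi:=(\pi_1,\pi_2)\colon X\to\P^1\times\P^1$ has degree $2$; skew lines give $\ell_1\cdot\ell_2=0$ and a birational $\phi$. Since $X$ has degree $3\notin\{1,2,4\}$, Lemma~\ref{lem:Str2conic} supplies no canonical dual bundle, which is exactly why two lines must be hypothesised, and coplanarity is the condition that makes $(\pi_1,\pi_2)$ behave like a dual pair, i.e.\ realises $X$ as a double conic bundle surface.

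For Zariski density I would argue as follows. The good fibre $C=\pi_1^{-1}(t_0)$ carries an integral point and meets $D$ in a real quadratic point, so by Lemma~\ref{lem:pointsatinftyconic} the affine conic $C\setminus D$ has infinitely many integral points $\{P_k\}$, which accumulate in the real topology at the two real points of $C\cap D$. Each $P_k$ lies on the fibre $C'_k:=\pi_2^{-1}(\pi_2(P_k))$, which therefore carries an integral point. It remains to see that infinitely many of the $C'_k$ are themselves good fibres of $\pi_2$: all but finitely many are smooth, and since $\pi_2(P_k)\to\pi_2(C\cap D)$ while the unramified hypothesis places $\pi_2(C\cap D)$ away from the branch locus of $\pi_2|_D$, the intersections $C'_k\cap D=(\pi_2|_D)^{-1}(\pi_2(P_k))$ are real quadratic points for all large $k$ (here Lemma~\ref{lem:meet_D_unramified} guarantees that goodness is the generic behaviour and persists near the limit). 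As $\pi_2|_C$ has finite degree the parameters $\pi_2(P_k)$ take infinitely many values, so we obtain infinitely many distinct good fibres of $\pi_2$, each with infinitely many integral points by Lemma~\ref{lem:pointsatinftyconic}; these sweep out a Zariski dense subset.

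For the final, non-thinness statement I would invoke the degree $2$ structure forced by coplanarity. The map $\phi$ exhibits $X$ as a double conic bundle surface, and I would deduce non-thinness from the same general criterion for such surfaces that yields Theorem~\ref{thm:dP}, after checking that the transferred good fibre provides the required input. Concretely, suppose $(\mathcal{X}\setminus\mathcal{D})(\Z)$ were contained in a thin set $Z$. Since the fibres of a conic bundle are curves, a Hilbert-irreducibility-type statement shows that the set of $t\in\P^1(\Q)$ with $Z\cap\pi_i^{-1}(t)$ infinite is thin in $\P^1(\Q)$, and the strategy is to contradict this by producing a non-thin set of such parameters. The construction above, bootstrapped through both bundles (integral points on a good $\pi_1$-fibre produce good $\pi_2$-fibres, whose integral points in turn produce good $\pi_1$-fibres), yields a countable union of Pell families of parameters, and the degree $2$ cover $\phi$ is what lets one organise these into a genuinely non-thin set in the base.

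The main obstacle, I expect, is precisely this last point: showing that the resulting set of base parameters is non-thin rather than merely infinite. Each individual fibre contributes only a sparse Pell family, which on its own is thin inside $\P^1(\Q)$, so non-thinness can emerge only from combining infinitely many fibres coming from both bundles. This is exactly where coplanarity is indispensable: it is the degree $2$ map $\phi\colon X\to\P^1\times\P^1$ that couples the two families and allows the double conic bundle argument to beat an arbitrary thin set. In the skew case $\phi$ is merely birational, the coupling is absent, and the method delivers only Zariski density, which accounts for the weaker conclusion there.
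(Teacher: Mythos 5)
Your setup (intersection numbers $C_1\cdot C_2 = 1+\ell_1\cdot\ell_2$, the bootstrap between the two fibrations via real accumulation of integral points) matches the paper's strategy, but both halves of your proof have a genuine gap. For Zariski density, the step that fails is the claim that the fibres $C'_k = \pi_2^{-1}(\pi_2(P_k))$ meet $D$ in \emph{real quadratic} points for large $k$. Proximity of $\pi_2(P_k)$ to $\pi_2(C\cap D)$ away from the branch locus of $\pi_2|_D$ gives only that $C'_k\cap D$ is reduced and has a real point; it does not give irreducibility over $\Q$. If $C'_k\cap D$ is a pair of \emph{rational} points, then Lemma \ref{lem:pointsatinftyconic} requires $|S|\geq 2$ to conclude infinitude of integral points, and this fails over $\Z$, where $S$ consists of the single archimedean place (compare $xy=1$, with four integral points, against a Pell conic). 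Since $D$ has genus $1$, the set $D(\Q)$ may be infinite, so a priori infinitely many of your $C'_k$ could have rational boundary and hence only finitely many integral points. This is exactly hypothesis \eqref{hyp_intro:2b} of Theorem \ref{thm:generalisation}, and the paper closes the gap with Lemma \ref{lem:conic_finiteness}(1): applying it to $Y=D$, $f=\pi_2|_D$ (so using Siegel's theorem and the genus-$1$ hypothesis on $D$), the set $\pi_2(C^\circ(\Z))\cap\pi_2(D(\Q))$ is finite, so all but finitely many of the $C'_k$ have irreducible boundary. Your proposal contains no substitute for this step.

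For non-thinness, you invoke "the same general criterion as Theorem \ref{thm:dP}" but never verify its decisive hypothesis, and your explanation of the role of coplanarity is not the actual mechanism. What Theorem \ref{thm:generalisation} requires is hypothesis \eqref{hyp_intro:1}: simple connectedness of $X\setminus(D\cup E)$, where $E$ is the union of curves contracted by \emph{both} fibrations (i.e.\ common components of singular fibres). This is what forces every nontrivial cover to ramify over a divisor horizontal for at least one $\pi_i$, which is how arbitrary thin sets are defeated; your proposal never identifies $E$ at all. Verifying the hypothesis is a genuine geometric lemma, namely Lemma \ref{lem:cubic_lines}: for coplanar lines, $E$ is a single line, and blowing it down one reduces to the complement of a smooth anticanonical divisor in a quartic del Pezzo surface, which is simply connected by \cite[Lem.~3.3.6]{Har17}; for skew lines, $E$ consists of five pairwise skew lines and the complement is provably \emph{not} simply connected (Lemma \ref{lem:not_simply_connected}). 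The latter, and not the birationality of $\phi=(\pi_1,\pi_2)$, is why the skew case only yields Zariski density (see Question \ref{ques:cubic}); your "coupling via the degree $2$ map $\phi$" heuristic points at $C_1\cdot C_2=2$, which does enter the proof, but only through Lemma \ref{lem:conic_finiteness}(2), not as the engine of non-thinness.
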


This result allows us for instance to recover non-thinness of integral points for sums of three cubes when $n$ is a cube, a result originally due to Coccia \cite[Thm.~1.15]{Coccia2019TheSurfaces} (see Example \ref{ex:sums_three_cubes}). Theorems \ref{thm:dP} and \ref{thm:cubic} are all special cases of the following, which allows more general surfaces, applies over number fields, and to $S$-integral points. It is the main result of our article.

\begin{theorem}\label{thm:generalisation}
    Let $k$ be a number field and $X$ be a smooth projective surface over $k$ which
    admits two distinct conic bundle structures $\pi_i:X \to \P^1$ for $i = 1,2$. Let $D \subset X$ be a smooth irreducible divisor. Let $S$ be a finite set of places containing the archimedean places
    and $\mathcal{X}$ a projective model for $X$ over $\O_{k,S}$ and $\mathcal{D}$ the closure of $D$ in $\mathcal{X}$. Assume that the following conditions hold.
    \begin{enumerate}
        \item $X \setminus (D\cup E)$ is simply connected, where $E$ is the union of curves that are constant for both conic bundles. \label{hyp_intro:1}
        \item For all $P \in \P^1(k)$ we have $\pi_1^{-1}(P) \cdot D = \pi_2^{-1}(P) \cdot D = 2$. 
        \label{hyp_intro:3}         
        \item $\mathcal{X}\setminus \mathcal{D}$ admits an $\O_{k,S}$-integral point which lies on a smooth conic $C:=\pi_1^{-1}(P_0)$ with $P_0 \in \P^1(k)$ such that \label{hyp_intro:2}
        \begin{enumerate}
             \item $C^\circ(\O_{k,S})$ is infinite. \label{hyp_intro:2a}
             \item If $k = \Q$ or an imaginary quadratic extension of $\Q$ and $|S| = 1$, 
             then the set $\pi_2(C^\circ(\O_{k,S})) \cap \pi_2(D(k)) \subset \P^1(k)$ is finite. \label{hyp_intro:2b}
             \item The map $\pi_2|_D : D \to \P^1$ is unramified along $C \cap D$. \label{hyp_intro:2c}
		\end{enumerate} 
    \end{enumerate}
    Then $(\mathcal{X} \setminus \mathcal{D})(\O_{k,S})$ is not thin.
\end{theorem}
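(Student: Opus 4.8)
My plan is to argue by contradiction against the definition of thinness (Definition~\ref{def:IHP}): I would suppose that $(\mathcal{X}\setminus\mathcal{D})(\O_{k,S})$ is contained in a thin set, write that set as $Z(k) \cup \bigcup_{j=1}^{r}\phi_j(Y_j(k))$ with $Z\subsetneq X$ a proper closed subvariety (the type~I part) and each $\phi_j:Y_j\to X$ a generically finite morphism of degree $\geq 2$ with $Y_j$ geometrically integral (the type~II part), and then exhibit an integral point avoiding all of these. The engine is to propagate the infinitely many integral points on the given conic $C=\pi_1^{-1}(P_0)$ across the surface using the \emph{second} conic bundle $\pi_2$.

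The first concrete step is to manufacture many conics carrying infinitely many integral points. For $p\in C^\circ(\O_{k,S})$ (an infinite set by hypothesis~\eqref{hyp_intro:2a}) I form the conic $F_p:=\pi_2^{-1}(\pi_2(p))$; it passes through the integral point $p$, and for all but finitely many $p$ it is a smooth conic, hence $F_p\cong\P^1$. By hypothesis~\eqref{hyp_intro:3} the divisor $F_p\cap D$ has degree $2$, the unramifiedness condition~\eqref{hyp_intro:2c} together with Lemma~\ref{lem:meet_D_unramified} guarantees its two points are distinct, and then Lemma~\ref{lem:pointsatinftyconic}, supplemented in the rank-zero unit case ($k=\Q$ or imaginary quadratic with $|S|=1$) by the avoidance condition~\eqref{hyp_intro:2b} that keeps $F_p\cap D$ from splitting rationally, shows that $F_p^\circ(\O_{k,S})$ is infinite for infinitely many $p$. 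As integral points of a one-dimensional torus, each such $F_p^\circ(\O_{k,S})$ is not thin \emph{in} $F_p$.

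Given one such conic $F$ on which no $\phi_j$ acquires a rational section and none splits, the plan is routine: only finitely many of the $F_p$ lie inside $Z$, and on the remainder $Z\cap F$ is finite; meanwhile Hilbert's irreducibility theorem applied to the torus $F^\circ$ shows $\bigcup_j \phi_j(Y_j(k))\cap F^\circ(\O_{k,S})$ is thin in $F$, so the infinite set $F^\circ(\O_{k,S})$ must contain a point outside the ambient thin set. To control the type~II covers I would invoke hypothesis~\eqref{hyp_intro:1}: since $X\setminus(D\cup E)$ is simply connected, no nontrivial connected cover is étale over it, so each $\phi_j$ ramifies along a curve not contained in $D\cup E$, i.e.\ one that is non-constant for $\pi_1$ or for $\pi_2$. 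This forces each $\phi_j$, up to the conic-fibre direction, to be pulled back from the base of one of the two bundles and to possess no rational section over the relevant generic fibre, so that the fibrewise Hilbert irreducibility above applies over generic fibres.

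The main obstacle is exactly the covers pulled back from the base $\P^1$ of $\pi_2$. The naive one-parameter family $\bigcup_p F_p^\circ(\O_{k,S})$ has its $\pi_2$-image contained in $\pi_2(C(k))$, a \emph{thin} subset of $\P^1$, and is therefore itself swallowed by the irreducible degree-$2$ cover $X\times_{\P^1,\,\pi_2}C\to X$; so a single hop cannot suffice. The resolution, which is the heart of the argument, is to spread in the $\pi_1$-direction as well: from integral points on the $F_p$ one hops back along $\pi_1$ to reach conics with genuinely new $\pi_2$-values, and iterates, producing a two-dimensional family. I expect the delicate point to be proving that this iterated family is not confined to the $\pi_i$-preimage of the image of any finite collection of base covers — equivalently, that finitely many such ``coordinate'' covers (which simple connectivity, via hypothesis~\eqref{hyp_intro:1}, shows are the only possibilities up to the fibre direction) cannot jointly catch it. It is precisely here that having \emph{two} dual conic bundles, together with the intersection normalisation~\eqref{hyp_intro:3}, is indispensable.
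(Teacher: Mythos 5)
Your overall skeleton (argue by contradiction, propagate integral points between the two fibrations, use hypothesis \eqref{hyp_intro:1} to force every cover to ramify transversally to at least one bundle) matches the paper's strategy in outline, but two load-bearing steps fail as stated. First, the manufacture of conics with infinitely many integral points: you claim that for $p \in C^\circ(\O_{k,S})$, distinctness of the two points of $F_p \cap D$ (plus condition \eqref{hyp_intro:2b}) lets Lemma \ref{lem:pointsatinftyconic} conclude that $F_p^\circ(\O_{k,S})$ is infinite. It does not. That lemma also requires a place $v \in S$ with $(F_p \cap D)(k_v) \neq \emptyset$, and for an arbitrary integral point $p$ this local condition can fail: over $\Z$, a conic whose quadratic point at infinity is imaginary has only finitely many integral points (compare $x^2+y^2=n$). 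This is exactly where the paper needs its strong approximation result (Proposition \ref{prop:v_adic_approximation}): one must \emph{choose} the points $p$ to be $v$-adically close to $C \cap D$, inside the cylinder $\pi_2^{-1}(\pi_2(W_D))$ where $W_D$ is the unramified locus of $\pi_2|_D$ in $D(k_v)$ --- this is the real role of hypothesis \eqref{hyp_intro:2c} --- so that the fibres $F_p$ through such $p$ meet $D$ in points that are $k_v$-soluble (Lemma \ref{lem:Zariski_dense}). Without this topological selection the propagation step collapses.

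Second, your engine on each conic is wrong: $F^\circ(\O_{k,S})$ is in general \emph{thin} in $F$. For a Pell conic over $\Z$ the integral points are, up to finite index, $\{\pm\epsilon^n\}$, and the even and odd powers lie in the images of the squaring isogeny and its translates, which are thin maps; more generally any finitely generated subgroup of a one-dimensional torus is thin. So ``Hilbert irreducibility applied to the torus'' cannot produce a point missing the covers. What the paper uses instead is a Siegel-type finiteness statement: under ramification hypotheses, only \emph{finitely} many integral points of a conic lift to a given cover (Lemma \ref{lemma:5.3}, feeding into Lemma \ref{lem:conic_finiteness}); verifying those hypotheses is where the inequality $C_{1,P}\cdot C_{2,P} > 1$, extracted from hypothesis \eqref{hyp_intro:1} via Lemma \ref{lem:not_simply_connected}, is indispensable --- a point your proposal never derives. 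Finally, the step you yourself flag as ``the delicate point'' (that the iterated two-dimensional family is not caught by covers pulled back from the bases) is the heart of the proof and is left unresolved in your proposal; the paper settles it by showing the sets $A_i$ of parameters whose fibres carry infinitely many integral points are not thin in $\P^1(k)$ (Proposition \ref{prop:A_not_thin}), and then running the dichotomy between $\pi_1$-ramified and $\pi_1$-unramified covers to reach a contradiction.
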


In the statement, $C^\circ(\O_{k,S})$ denotes the collection of integral points on $\mathcal{C} \setminus \mathcal{D}$, where $\mathcal{C}$ denotes the closure of $C$ in $\mathcal{X}$.
Let us discuss the assumptions in Theorem~\ref{thm:generalisation}. Condition \eqref{hyp_intro:1} is fundamental to the method as it guarantees that a finite cover of $X$ ramifies along a divisor which is horizontal with respect to one of the conic bundles. The condition $\pi_1^{-1}(P) \cdot D = 2$ in \eqref{hyp_intro:3} allows there to be conics in the family with infinitely many integral points. Condition \eqref{hyp_intro:2a} gives some integral points to work with so that we can generate more. The finiteness condition in \eqref{hyp_intro:2b} is fairly mild and holds for all but finitely many conics or if $D$ has positive genus (see Lemma~\ref{lem:conic_finiteness}). The condition \eqref{hyp_intro:2c} also holds for all but finitely many conics (see Lemma \ref{lem:meet_D_unramified}). For example, the assumption in Theorem \ref{thm:cubic} that the lines are coplanar is required to assure that Condition \eqref{hyp_intro:1} holds (see Lemma \ref{lem:cubic_lines} and Question \ref{ques:cubic}).

Theorem \ref{thm:Fermat} naturally suggests the following conjecture.

\begin{conjecture}
    Let $n \in \Z$ and assume that the equation
    $$x^4 + y^4 - w^2 = n$$
    has an integer solution. Then there are infinitely many solutions.
\end{conjecture}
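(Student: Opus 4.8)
The plan is to deduce the conjecture from Theorem~\ref{thm:generalisation} (or its del Pezzo incarnation Theorem~\ref{thm:dP}) exactly as Theorem~\ref{thm:Fermat} is obtained: given $n$ together with one integer solution, homogenise \eqref{def:Fermat} to the degree-$2$ del Pezzo surface
$$X_n \subset \P(1,1,1,2), \qquad w^2 = x^4 + y^4 - n z^4,$$
so that $U_n = X_n \setminus D_n$ with $D_n = \{z = 0\}$ the smooth anticanonical section $w^2 = x^4 + y^4$. One would then exhibit a conic bundle structure $\pi_1 : X_n \to \P^1$ defined over $\Q$, form its dual $\pi_2$ (for these surfaces the Geiser involution $w \mapsto -w$), and check that the fibre $C$ through the given solution satisfies the integral-point conditions \eqref{hyp_intro:2a}--\eqref{hyp_intro:2c} together with the real quadratic-point hypothesis of Theorem~\ref{thm:dP}; non-thinness, and in particular infinitude, would follow. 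So the entire conjecture reduces to producing a rational conic bundle on $X_n$ for every $n$ admitting a solution.

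The geometry pinpoints where the special shapes of $n$ enter. Over $\overline{\Q}$ every $X_n$ with $n \neq 0$ is isomorphic to the double cover of $\P^2$ branched along the Fermat quartic $x^4 + y^4 = z^4$ (rescale $z$ by $n^{1/4}$), so each carries the full complement of geometric conic bundle classes; the only issue is which of these are Galois-stable, i.e.\ descend to a conic bundle over $\Q$. The conic bundles used in Theorem~\ref{thm:Fermat} all arise from a rational factorisation of the quartic form: for $n = m^2$ one writes $(x^2 - m)(x^2 + m) = (w - y^2)(w + y^2)$ and sweeps out the pencil by fixing the ratio $s/t$ with $s = x^2 - m$, $t = w - y^2$; the cases $n = m^4$ and $n = \pm 2m^2$ come from the analogous factorisations $y^4 - n = (y^2 - m^2)(y^2 + m^2)$ and $x^4 \pm 2x^2 y^2 + y^4 = (x^2 \pm y^2)^2$. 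Each factorisation is rational precisely for the displayed form of $n$. Thus the first concrete task would be to classify, as a function of $n$, the Galois action on the $56$ exceptional curves of $X_n$ — equivalently the splitting field of the $28$ bitangents of the branch quartic $x^4 + y^4 - n z^4$ — and to read off exactly when a conic class is fixed and \eqref{hyp_intro:1} holds for the resulting boundary.

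The main obstacle, and the reason the conjecture is genuinely open, is that for a general $n$ no conic class is Galois-stable, so $X_n$ admits no conic bundle over $\Q$ and the present method yields nothing. The families in Theorem~\ref{thm:Fermat} are essentially the complete list of $n$ for which a rational factorisation — hence a rational conic bundle — exists, and this list should not enlarge under the techniques of the paper. A proof in full therefore appears to demand a genuinely new input for those $X_n$ with no rational conic bundle: for example a fibration of $U_n$ in curves of genus $0$ or $1$ for which a single integer solution forces positive rank and thereby infinitely many integral points, or a descent/twisting construction generalising the recursion of \cite{Eliashar2023An=4} for $n = 8$. This difficulty is of the same nature as, and no easier than, proving infinitude for sums of three cubes once one representation is known — precisely the phenomenon the paper is designed to model.
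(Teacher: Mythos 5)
You have correctly recognised that this statement is not a theorem of the paper at all: it is stated as an open conjecture, with no proof given, precisely because the paper's method reaches no further than Theorem \ref{thm:Fermat}. There is therefore no proof of the paper to compare yours against, and your proposal — which is an analysis of why the statement resists the paper's techniques rather than a proof — matches the paper's own position. In particular, your central claim is exactly Proposition \ref{prop:conic_bundle_Fermat}: $X_n$ admits a conic bundle over $\Q$ if and only if $n$, $2n$ or $-2n$ is a square or $-4n$ is a fourth power, so outside those families the double-fibration machinery of Theorems \ref{thm:generalisation} and \ref{thm:dP} has no fibration to run on. The paper flags the same obstruction and singles out the case $-n$ a square (other than $n = -4m^4$) as the challenge case: obvious solutions exist, yet infinitude is unknown and no conic bundle is available. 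Your identification of the mechanism — Galois-stability of a conic class, equivalently a rational factorisation structure in $x^4+y^4-nz^4-w^2$ — is consistent with how Propositions \ref{prop:conic_bundle_Fermat} and \ref{prop:conic_bundle_equation} actually proceed, namely via the Galois action on the $56$ lines computed from \cite{Kresch2004On2} and Lemma \ref{lem:conic_bundle_exists}.

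One refinement worth adding to your account: the conjecture is open even \emph{within} the conic-bundle families. Theorem \ref{thm:Fermat} does not convert an arbitrary solution into infinitude; it needs a ``good'' solution satisfying the stated real conditions (e.g.\ $st(s+t)(s-t)>0$ when $n=m^2$, or $4s^4-12s^2t^2+t^4>0$ when $n=-2m^2$), which in the proof correspond to the fibre through the point meeting $D$ in a \emph{real} quadratic point — a hypothesis that can genuinely fail for a given solution. So even for square $n$, a single bad solution does not yield infinitely many by the paper's results. This sharpens your conclusion: a full proof of the conjecture requires new input not only for $n$ with no rational conic bundle, but also to remove the real conditions in the cases where one exists.
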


An interesting case is when $-n$ is a square; here there are obvious solutions, but infinitude is not known in any case aside from when $n = -4m^4$ by Theorem~\ref{thm:Fermat}. Outside this case, there is no conic bundle structure by Proposition~\ref{prop:conic_bundle_Fermat}.

\begin{remark}
    The solutions to the case $n = 8$ of \eqref{def:Fermat} in \cite{Eliashar2023An=4} arise via a complicated recursive relation which is formally similar to the more classical recursive relation for solutions to the Pell equation: an affine conic. It was this observation which led us to search for a conic bundle on the surface, which ultimately led to the classification of conic bundle structures given in Proposition \ref{prop:conic_bundle_Fermat}. Thus the solutions found in \cite{Eliashar2023An=4} all come from integer points on finitely many conics. Our new innovation is to find infinitely many conics with integer solutions, in fact  so many conics to obtain the integral Hilbert property.

    Similarly, on Mathematics Stack Exchange \cite{Tito} is presented the relation
    $$(17p^2-12pq-13q^2)^4 + (17p^2+12pq-13q^2)^4 = (289p^4+14p^2q^2-239q^4)^2 + (17p^2-q^2)^4$$
    for $p, q \in \Z$, which is apparently due to E.~Fauquembergue from the late 1800's.
    Taking $17p^2-q^2 = 1$ one finds a Pell equation parametrising infinitely many solutions in the case $n = 1$. Proposition \ref{prop:conic_bundle_equation} shows that this Pell equation is in fact part of an infinite family of solutions coming from a conic bundle structure. It does not seem to have been previously known that the integral points are Zariski dense on this surface for $n =1$; this is a very special case of our results.
\end{remark}

\subsection{Method of proof and related literature}
Theorem \ref{thm:generalisation} is proved via the \textit{double fibration method}. This method was first introduced by Corvaja and Zannier \cite{Corvaja2017OnVarieties} to show that rational points on the Fermat quartic surface are not thin. This method was subsequently generalised by Demeio \cite{Demeio2020Non-rationalProperty,Demeio2021EllipticProperty} (elliptic surfaces) and Streeter \cite{Streeter2021HilbertVarieties} (conic bundles). Conic bundles and infinite automorphism groups are also used in \cite{KolVP, Kolllar} to prove Zariski density of integral points on certain affine cubic surfaces with singular boundary divisor.

The closest work to ours is Coccia \cite{Coccia2019TheSurfaces, Coccia2024ASurfaces}. Coccia proves in \cite[Thm.~1.7]{Coccia2024ASurfaces} a result on the integral Hilbert property for double conic bundle surfaces, with possibly singular boundary divisor. He uses this to obtain the integral Hilbert property \cite[Thm.~1.6]{Coccia2024ASurfaces} for del Pezzo surfaces after a possible finite field extension and after enlarging the set of places. A sketch of a proof is also given in \cite[\S6.4]{Coccia2024ASurfaces} regarding how to apply his method to a specific affine cubic surface over $\Z$. 
The assumptions in \cite[Thm.~1.7]{Coccia2024ASurfaces} imply that the fibres of the two conic fibrations have intersection number $2$, 
so this does not cover the case of del Pezzo surfaces of degree $2$ as in \eqref{def:Fermat}, where the intersection number is $4$.

Our main innovation is to make this method work with weaker assumptions, and with particular care constructing affine conics over $\Z$ with infinitely many integral points, since this is more difficult over $\Q$ as there is only one place at infinity (see Lemma \ref{lem:pointsatinftyconic}). We also require a result on strong approximation for integral points on conics (Proposition \ref{prop:v_adic_approximation}), which we combine with a result on lifting integral points to covers (Lemma \ref{lem:conic_finiteness}) and  topological arguments in Lemma~\ref{lem:Zariski_dense} to generate conics in the family with infinitely many integral points. We keep running our process to show that there are non-thin many conics with infinitely many integral points, from where our proof follows a similar strategy to \cite[Thm.~1.7]{Coccia2024ASurfaces}. 

In private communication, Coccia has informed us of forthcoming work where he applies the double fibration method to prove the integral Hilbert property for some affine cubic surfaces over $\Z$ with possibly singular boundary divisor, including important classical surfaces such as Markoff type cubic surfaces.  His approach uses the method sketched in \cite[\S6.4]{Coccia2024ASurfaces}, which is a similar strategy to our own.

\subsection*{Outline of the paper} In \S \ref{sec:background} we recall various background on conic bundle surfaces. In \S \ref{sec:double_conic} we prove our main result (Theorem \ref{thm:generalisation}), by building upon the method of Coccia \cite[Thm.~1.7]{Coccia2024ASurfaces}. In \S\ref{sec:del_Pezzo} we specialise our main theorem to del Pezzo surfaces and prove Theorems \ref{thm:dP} and \ref{thm:cubic}. In \S \ref{sec:Fermat} we give our application to Theorem \ref{thm:Fermat}. The key innovation here is making the non-obvious realisation that the surfaces in this theorem have a conic bundle structure and writing down the conic bundle structure explicitly; this is achieved in Propositions \ref{prop:conic_bundle_Fermat} and \ref{prop:conic_bundle_equation}.

\subsection*{Notation}
For a variety $X$ over a field $k$, say that $X$ is \textit{simply connected} if the \'etale fundamental group of the base change of $X$ to the algebraic closure is trivial.

Let $k$ be a number field and by $v$ a place of $k$.
For $S$ a finite set of places of $k$ containing the archimedean ones, we denote by 
$$\O_{k,S} = \{ \alpha \in k : v(\alpha) \geq 0 \text{ for all } v \notin S\}$$ the ring of $S$-integers of $k$.
For a non-archimedean $v$, we denote by $\O_v = \{ \alpha_v \in k_v : v(\alpha_v) \geq 0\}$ the associated valuation ring.

\subsection*{Funding}
This work was supported by UKRI Future Leaders Fellowship \texttt{MR/V021362/1}. JA thanks the Max Planck Institute for Mathematics in Bonn for their hospitality and financial support. JA is member of Italian ``National Group for Algebraic and Geometric Structures, and their Application'' (GNSAGA - INdAM).

\subsection*{Acknowledgements}
We are grateful to Julian Demeio and Harry Shaw for useful discussions, Sam Streeter for detailed comments which led to Lemma~\ref{lem:cubic_lines}. We thank Kumayl Vanat for providing the data in Table \ref{tab:solutions}, and  Ari Shnidman for making us aware of the paper \cite{Eliashar2023An=4} and discussing the geometry of the surface.  We also thank Simone Coccia for various comments and pointing out a mistake in an earlier version of Lemma 3.5.

\section{Preliminaries} \label{sec:background}
In this section we provide some background notions that we will need, in particular on del Pezzo surfaces and conic bundles.

\subsection{Integral Hilbert property}

We recall some basic facts about thin sets and the integral Hilbert property here. For additional background and context, see the recent survery article \cite{FehmJav2025}.

\begin{definition}
    Let $X$ be an integral variety over a field $k$. A \emph{thin map} is a morphism
    $Y \to X$ from an integral variety which admits no rational section. A \emph{thin subset}
    of $X(k)$ is a subset contained in the images of the rational points of
    the union of finitely many thin maps.
\end{definition}

Let $k$ now be a number field.

\begin{definition} \label{def:integral_model}
    Let $X$ be an integral variety over $k$ and $D$ a divisor on $X$. An \emph{integral model} over $\O_{k,S}$ for $(X,D)$ is a pair $(\mathcal{X}, \mathcal{D})$ of flat schemes over $\O_{k,S}$ together with a choice of isomorphism between $X$ and the generic fibre of $\mathcal{X}$, such that $\mathcal{D}$ is the closure of $D$ in $\mathcal{X}$.
    
    An \emph{$S$-integral point of $\mathcal X \setminus \mathcal D$} is a section $s_P$ of $\mathcal X \to \mathrm{Spec}(\O_{k,S})$ avoiding $\mathcal D$, i.e.\ for each $\mathfrak{p} \in \mathrm{Spec}(\O_{k,S})$ we have that $s_P(\mathfrak{p}) \notin \mathcal{D}_{\mathfrak{p}}$.
    We denote the set of these points by $(\mathcal X\setminus \mathcal{D})(\O_{k,S})$. If $D$ and the model is clear from the context, we also denote by $X^\circ = X \setminus D$ and $X^\circ(\O_{k,S}) =(\mathcal X\setminus \mathcal{D})(\O_{k,S})$.
\end{definition}

For a closed subset $C \subset X$, we obtain a natural model $\mathcal{C}$ for $C$ by taking the closure of $C$ in $\mathcal{X}$. We typically denote by $C^\circ :=C \setminus D$, and $C^\circ(\O_{k,S}) := (\mathcal{C} \setminus \mathcal{D})(\O_{k,S})$.

\begin{definition} \label{def:IHP}
    Let $\mathcal{X}$ be a finite type integral scheme over $\O_{k,S}$. We say that $\mathcal{X}$ satisfies the \emph{$S$-integral Hilbert property} if $\mathcal{X}(\O_{k,S})$ is not thin.
\end{definition}

\subsection{Integral points on curves}

First we recall the famous Siegel's Theorem on finiteness of integral points for curves of positive genus \cite{siegel2014einige}.
\begin{theorem}\label{thm:siegel}
    Let $C$ be an affine irreducible curve of genus $>0$ over $k$. Then for any integral model of $C$, the $S$-integral points of $C$ are finite.
\end{theorem}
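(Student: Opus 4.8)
\emph{Proof proposal.} The plan is to prove the classical Siegel theorem via the Jacobian-embedding method, combining the Mordell--Weil theorem with Roth's theorem on Diophantine approximation (in its $S$-adic form due to Ridout and Lang--Mahler). First I would reduce to the case that $C$ is smooth by passing to its normalisation, which does not decrease the genus and does not affect finiteness of integral points. Let $\tilde{C}$ be the smooth projective model and $\Sigma = \tilde{C} \setminus C$ the finite set of points at infinity; since $C$ is affine, $\Sigma \neq \emptyset$. Suppose for contradiction that $C(\O_{k,S})$ is infinite. By the Northcott property the heights of these points are unbounded, so I may extract an infinite sequence $(Q_i)$ of $S$-integral points whose heights tend to infinity and which, after passing to a subsequence, converge $v$-adically to a single point at infinity $P \in \Sigma$ for some fixed place $v \in S$; $S$-integrality forces the points to approach $\Sigma$ only at places of $S$, so the global height of $Q_i$ is, up to bounded error, measured by the local distances to $\Sigma$ at the places of $S$.

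The key device is to transfer this approximation problem to a cover with better numerics. After possibly enlarging $k$ by a finite extension (which only enlarges $S$ and does not affect the conclusion), fix a base point and form the Abel--Jacobi embedding $C \hookrightarrow J := \mathrm{Jac}(\tilde{C})$; this is a closed immersion since $g \geq 1$. The images of the $Q_i$ lie in $J(k)$, which is finitely generated by the Mordell--Weil theorem. I would fix a large integer $m$, to be chosen at the end. By weak Mordell--Weil the quotient $J(k)/mJ(k)$ is finite, so after a further passage to a subsequence all the $Q_i$ lie in one coset: $Q_i = [m]R_i + T$ with $R_i \in J(k)$ and $T$ fixed. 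Setting $\phi = [m](\cdot) + T$ and $C_m = \phi^{-1}(C)$, the multiplication map $[m]\colon J \to J$ is étale, so $\phi\colon C_m \to C$ is an unramified cover, each $R_i$ lies in $C_m(k)$, and $\phi$ is unramified over the points at infinity; in particular $R_i$ converges $v$-adically to a lift $\tilde{P} \in C_m$ of $P$.

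Now I would run the height comparison. On $J$ the Néron--Tate canonical height satisfies $\hat{h}([m]R) = m^2 \hat{h}(R)$, whence $\hat{h}(R_i)$ is comparable to $\hat{h}(Q_i)/m^2$ and still tends to infinity. Because $\phi$ is unramified at $\tilde{P}$, the local analytic isomorphism shows that the $v$-adic distance from $R_i$ to $\tilde{P}$ is comparable (up to a bounded multiplicative constant) to the distance from $Q_i$ to $P$. Choosing a local coordinate, i.e.\ a non-constant function $t$ on $C_m$ regular at $\tilde{P}$ with $\alpha := t(\tilde{P})$ algebraic, this says that $R_i$ produces an approximation $t(R_i) \to \alpha$ whose quality, measured against the height $\hat{h}(R_i) \approx \hat{h}(Q_i)/m^2$, has approximation exponent at least of order $m^2$. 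Taking $m$ large enough that this exponent exceeds $2$ then contradicts Roth's theorem (in its $S$-adic form), and this contradiction proves that $C(\O_{k,S})$ is finite.

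The step I expect to be the main obstacle is the bookkeeping of local heights that makes the ``exponent gets multiplied by $m^2$'' claim precise: one must invoke the Weil height machine to decompose heights into local Weil functions, verify via $S$-integrality that essentially all of the height of $Q_i$ is concentrated in the $v$-adic distance to $\Sigma$, and check that the unramifiedness of $[m]$ at the points at infinity transports these local estimates faithfully to $C_m$ while keeping the approximating values $t(R_i)$ in a number field of bounded degree, so that Roth applies. These functorial properties of heights and local distance functions under the étale cover $\phi$, together with the quadratic scaling of the canonical height, are exactly what convert the geometric genus hypothesis into the decisive numerical gain.
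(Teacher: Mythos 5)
The paper does not prove this statement at all: it is Siegel's theorem, quoted verbatim with a citation to Siegel's original article, so there is no in-paper argument to compare against. Your proposal reconstructs the standard modern proof of that classical result --- embed $\tilde{C}$ in its Jacobian, use (weak) Mordell--Weil to write $Q_i = [m]R_i + T$, pull back along the \'etale map $[m](\cdot)+T$ so that the canonical height drops by a factor of $m^2$ while the $v$-adic distance to the boundary is preserved, then contradict Roth's theorem for $m$ large --- and the outline is correct; it is essentially Siegel's own strategy with Roth's theorem replacing the weaker Thue--Siegel approximation theorem he had available. Two points to tighten if you write it in full. First, an infinite sequence of $S$-integral points need not a priori converge to a point of $\Sigma$ (it could accumulate at an interior point of $\tilde{C}(k_v)$); what forces accumulation at infinity is the pigeonhole you invoke only afterwards: for $S$-integral points the local contributions to the height away from $S$ are bounded, so some fixed $v \in S$ carries at least a $1/|S|$ fraction of the (unbounded) height, and it is at that place that the points must approach $\Sigma$. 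Second, $\phi^{-1}(\tilde{C})$ may be reducible and the lift $\tilde{P}$ is only determined after a further subsequence, so you should pass to the component containing infinitely many $R_i$; nothing downstream requires more than that, since Roth applies to approximation of any algebraic number $t(\tilde{P})$ by elements of $k$. With those standard repairs the argument is complete, and its advantage over the paper's bare citation is that it makes the mechanism (quadratic height scaling versus distance preservation under \'etale covers) explicit.
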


We now take the following set up: let $C$ be a smooth projective curve of genus $0$ over $k$ and $D$  a reduced effective divisor on $C$ of degree at most $2$; we informally refer to $D$ as the ``points at infinity''. Let $S$ be a finite set of places of $k$ containing the archimedean ones and $\mathcal{C}$ a normal projective model for $C$ over $\O_{k,S}$, with $\mathcal{D}$ the closure of $D$ in $\mathcal{C}$. We write $C^\circ = C \setminus D$ and $C^\circ(\O_{k,S}) = (\mathcal{C} \setminus \mathcal{D})(\O_{k,S})$.

\begin{lemma}\label{lem:pointsatinftyconic}
    If $C^\circ(\O_{k,S}) \neq \emptyset$, then $C^\circ(\O_{k,S})$ is infinite if and only if 
    there is a place $v \in S$ for which $D(k_v) \neq \emptyset$, and in the case that $\#D(k) = 2$ we have $|S| \geq 2$.
\end{lemma}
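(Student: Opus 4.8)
The plan is to use the hypothesis $C^\circ(\O_{k,S})\neq\emptyset$ to identify $C$ with $\P^1_k$, and then to recognise the affine curve $C^\circ=C\setminus D$ as one of three group varieties according to the shape of the reduced divisor $D$ of degree at most $2$, reducing the statement to a rank computation via (generalised) Dirichlet unit theorems. Since an integral point forces $C(k)\neq\emptyset$ and $C$ has genus $0$, we get $C\cong\P^1_k$. Assuming $D\neq\emptyset$, there are exactly three cases: (i) $\deg D=1$, so $D$ is a single $k$-rational point and $C^\circ\cong\mathbb{A}^1_k$; (ii) $\deg D=2$ with $\#D(k)=2$, so $C^\circ\cong\Gm$; (iii) $\deg D=2$ with $D$ a single closed point whose residue field is a quadratic extension $k'/k$, in which case $C^\circ$ is the norm-one torus $R^{(1)}_{k'/k}\Gm$.

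First I would dispatch the two split cases. In case (i) the set $C^\circ(\O_{k,S})$ is commensurable with $\O_{k,S}$, which is always infinite; here $D$ is rational so $D(k_v)\neq\emptyset$ for every $v\in S$ and $\#D(k)\neq 2$, so the right-hand side holds. In case (ii) it is commensurable with $\O_{k,S}^\times$, which by the $S$-unit theorem is finitely generated of rank $|S|-1$, hence infinite if and only if $|S|\geq 2$, matching the right-hand side since $\#D(k)=2$. At this point I would record the routine observation that changing the chosen normal projective model alters $C^\circ(\O_{k,S})$ only by finitely many local conditions at finitely many primes, hence by passage to a commensurable subset, so that infinitude is independent of the model and we may compute with the standard affine model.

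The substance is case (iii). Writing $\sigma$ for the generator of $\Gal(k'/k)$ and $u=(t-\alpha)/(t-\sigma\alpha)$, which identifies $C^\circ_{k'}$ with $\Gm$ and satisfies $\sigma(u)=u^{-1}$, an integral point of $C^\circ$ yields an $S'$-unit of $k'$ (with $S'$ the places above $S$) of norm $N_{k'/k}(u)=u\,\sigma(u)=1$, and conversely such norm-one $S'$-units give integral points up to commensurability. I would then compute the rank of the norm-one $S'$-units via the logarithmic embedding: the $S'$-units form a full lattice $\Lambda$ in the product-formula hyperplane $V\subset\R^{S'}$, and the norm-one condition imposes, for each $v\in S$, the vanishing of the sum of the coordinates at the places above $v$. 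A dimension count shows these conditions cut out a subspace $W\subseteq V$ of dimension exactly $s:=\#\{v\in S:\ v\ \text{splits in}\ k'\}$, since a split place contributes two coordinates constrained to be opposite while a non-split place contributes a single coordinate forced to vanish. Hence $C^\circ(\O_{k,S})$ has rank $s$, and is infinite if and only if $s\geq 1$.

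To conclude I would translate $s$ into the stated condition: a place $v$ splits in $k'$ exactly when the degree-$2$ closed point $D$ acquires a $k_v$-point, i.e.\ $D(k_v)\neq\emptyset$, for both non-archimedean and archimedean $v$. Thus $s\geq 1$ if and only if there exists $v\in S$ with $D(k_v)\neq\emptyset$, and since $\#D(k)=0$ here the extra clause is vacuous, which is precisely the right-hand side. The main obstacle is the rank computation in case (iii): one must ensure that the norm-one $S'$-units genuinely fill the full subspace $W$ (equivalently that $\Lambda\cap W$ has full rank in $W$), which is the content of the structure theorem for $S$-integral points of a one-dimensional torus and is exactly what secures the ``if'' direction; the opposite inclusion $\rank\leq\dim W=s$, giving the easy ``only if'' direction, is immediate from $\Lambda\cap W\subseteq W$.
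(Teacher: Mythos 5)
Your proposal is correct, but it takes a genuinely different route from the paper. The paper disposes of this lemma in two lines: it cites the known criterion for infinitude of integral points on affine genus-$0$ curves (Alvanos--Poulakis, Thm.~1.1, or Hassett--Tschinkel, Cor.~5.4) and merely adds the translation that for irreducible $D$ one has $D(k_v) \neq \emptyset$ if and only if $v$ splits completely in $k(D)$. You instead reprove that criterion from scratch: after identifying $C \cong \P^1_k$ and classifying $C^\circ$ as $\mathbb{A}^1$, $\Gm$, or the norm-one torus $\mathrm{R}^1_{k'/k}\Gm$ according to the shape of $D$, you reduce infinitude to a rank computation, with the key case (iii) handled by showing the norm-one $S'$-units have rank equal to $s = \#\{v \in S : v \text{ splits in } k'\}$ via the logarithmic embedding. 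Your dimension count for the subspace $W$ is right, and the full-rank claim you flag as the crux is indeed the standard fact (it follows, e.g., from $(\O_{k,S}^\times)^2 \subseteq N_{k'/k}(\O_{k',S'}^\times) \subseteq \O_{k,S}^\times$, which gives $\rank = (|S'|-1)-(|S|-1) = s$); likewise your commensurability/model-independence step is exactly the torsor argument the paper itself invokes later in Proposition~\ref{prop:v_adic_approximation} via Hassett's group-scheme models. What your route buys is a self-contained argument and strictly more information (the exact rank, namely the number of split places in $S$, rather than bare infinitude); what the paper's route buys is brevity and outsourcing of precisely the technical points you had to flag (the structure theorem for integral points on one-dimensional tori and independence of the model). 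One small point to make explicit: the lemma is false for $D = \emptyset$ (then $C^\circ = \P^1$ has infinitely many rational, hence integral, points while $D(k_v) = \emptyset$ for all $v$), so your standing assumption $D \neq \emptyset$ is genuinely needed and matches the lemma's intended reading.
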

\begin{proof}
    This is a geometric reformulation of a well-known criterion, see e.g.~\cite[Thm.~1.1]{Alvanos2009CharacterizingPoints} or \cite[Cor.~5.4]{Hassett2001}. One sees this
    by noting that if $D$ is irreducible, then $D(k_v) \neq \emptyset$ if and only if $v$ is completely split in the field extension $k \subset k(D)$.
\end{proof}

Another result \cite[Lem.~5.3]{Coccia2024ASurfaces} we require shows that only finitely many integral points lift to  certain covers, as a consequence of Siegel's Theorem.

\begin{lemma}\label{lemma:5.3}
    Assume that $\deg D = 2$. Let $V$ be a smooth projective irreducible curve and $f: V \to C$ be a finite map. Assume that one of the following holds:
    \begin{enumerate}
        \item the map $f$ is not totally ramified on $D$; or
        \item the map $f$ has at least one branch point outside the support of $D$.
    \end{enumerate}
    Then $f(V(k)) \cap C^\circ(\O_{k,S})$ is finite.
\end{lemma}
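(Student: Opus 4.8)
The plan is to lift the relevant points to a suitable affine curve on $V$ and apply Siegel's Theorem. Set $\widetilde{D} = (f^{-1}(D))_{\mathrm{red}}$ and let $W = V \setminus \widetilde{D}$. Since $f$ is finite hence surjective and $D \neq \emptyset$, the divisor $\widetilde{D}$ is nonempty, so $W$ is a smooth irreducible affine curve. After enlarging $S$ to a finite set $S' \supseteq S$ I would spread everything out: choose compatible flat projective models $\mathcal{V}, \mathcal{C}$ over $\O_{k,S'}$ and extend $f$ to a finite morphism $\mathcal{f}\colon \mathcal{V} \to \mathcal{C}$, so that $\widetilde{\mathcal{D}} \subseteq \mathcal{f}^{-1}(\mathcal{D})$ as closures map into closures. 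Enlarging $S$ only enlarges the set of integral points and is harmless, since Siegel's Theorem holds for every finite set of places.

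First I would record the key reduction. If $Q \in V(k)$ satisfies $f(Q) \in C^\circ(\O_{k,S})$, then the section $s_Q$ of $\mathcal{V}$ satisfies $\mathcal{f} \circ s_Q = s_{f(Q)}$, which avoids $\mathcal{D}$; hence $s_Q$ avoids $\mathcal{f}^{-1}(\mathcal{D}) \supseteq \widetilde{\mathcal{D}}$, so $Q \in W(\O_{k,S'})$. Therefore
$$ f(V(k)) \cap C^\circ(\O_{k,S}) \subseteq f\bigl(W(\O_{k,S'})\bigr), $$
and it suffices to prove that $W(\O_{k,S'})$ is finite.

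The core of the argument is a Riemann--Hurwitz computation. Working over $\overline{k}$, the curve $C$ has genus $0$ and $D$ consists of two distinct points $P_1, P_2$; let $n_i$ be the number of points of $V$ lying over $P_i$, so the number of geometric punctures of $W$ is $s := n_1 + n_2$, and let $B \geq 0$ be the total ramification of $f$ over points outside $\{P_1,P_2\}$. Since $\sum_{Q \mapsto P_i}(e_Q - 1) = d - n_i$ with $d = \deg f$, Riemann--Hurwitz gives
$$ 2 g(V) - 2 = -2d + (2d - s) + B = B - s, \qquad \text{i.e.}\qquad 2g(V) - 2 + s = B. $$
Under hypothesis (2) we have $B > 0$, so $2g(V) - 2 + s > 0$. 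Under hypothesis (1), $f$ is not totally ramified over some $P_i$, so $n_i \geq 2$ there and $s \geq 3$, whence $2g(V) - 2 + s \geq s - 2 \geq 1 > 0$. In either case $W$ has negative Euler characteristic.

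I would conclude by Siegel's Theorem: an affine curve with $2g - 2 + s > 0$ has only finitely many $S'$-integral points. When $g(V) > 0$ this is exactly Theorem~\ref{thm:siegel}; the remaining genus-$0$ case, where $W \cong \P^1$ minus $s \geq 3$ points, is the other classical instance of Siegel's Theorem (equivalently it reduces to finiteness for the $S$-unit equation, or to Theorem~\ref{thm:siegel} after an auxiliary finite cover via Chevalley--Weil). Thus $W(\O_{k,S'})$ is finite, and hence so is $f(V(k)) \cap C^\circ(\O_{k,S})$. The main subtlety to get right is the bookkeeping that integrality of $f(Q)$ forces integrality of $Q$ on $W$ after enlarging $S$, together with the correct reading of hypothesis (1) as guaranteeing at least three geometric points at infinity; the genus-$0$ subcase is precisely where one must appeal to Siegel's Theorem beyond the positive-genus form stated above.
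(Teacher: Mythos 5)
Your proof is correct, and it follows essentially the same route as the source: the paper does not prove this lemma itself but cites \cite[Lem.~5.3]{Coccia2024ASurfaces} as ``a consequence of Siegel's Theorem,'' which is precisely your argument (lift the relevant rational points to $S'$-integral points of $V \setminus f^{-1}(D)$ after spreading out, then use Riemann--Hurwitz to show $2g(V)-2+s>0$ under either hypothesis, and conclude by Siegel). Your bookkeeping at the two delicate points --- compatibility of models after enlarging $S$, and the need for the genus-$0$, three-punctures case of Siegel's theorem beyond the positive-genus statement of Theorem~\ref{thm:siegel} --- is accurate, so there is nothing to fix.
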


It is well-known that any non-simply connected variety fails strong approximation; this is the case for any affine conic with two geometric points at infinity. Nonetheless we shall require the following special case of strong approximation, which says that integral points can be arbitrarily close to the points at infinity.

\begin{proposition} \label{prop:v_adic_approximation}
    Assume that $C^\circ(\O_{k,S})$ is infinite. Let $v \in S$ be such that $D(k_v) \neq \emptyset$. Let $W \subset C(k_v)$ be an open neighbourhood of $D(k_v)$. Then
    $C^\circ(\O_{k,S}) \cap W$ is infinite.
\end{proposition}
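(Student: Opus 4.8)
The plan is to use the fact that the affine conic $C^\circ$ carries the structure of a one-dimensional algebraic group. Since $C^\circ(\O_{k,S})$ is infinite it is in particular non-empty, so $C\cong\P^1_k$ and $C^\circ=\P^1\setminus D$ with $\deg D\in\{1,2\}$. Fixing a rational point as identity, $C^\circ$ becomes a connected one-dimensional group $G_0$ over $k$: the additive group $\Ga$ if $\deg D=1$, and a (split or non-split) one-dimensional torus if $\deg D=2$. This group acts on $C=\P^1$ by translation, and its fixed locus on $C$ is precisely $D$. Writing $G:=C^\circ(\O_{k,S})$, up to commensurability (changing the integral model or enlarging $S$ alters $G$ only by a finite-index and finite-discrepancy amount, which does not affect the conclusion) this is the group of integral points of $G_0$; in particular $G$ is an infinite, finitely generated abelian group acting on the compact space $C(k_v)$ and fixing $D(k_v)$ pointwise.

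The key step is to find an element $g\in G$ whose iterates drive a base point into any neighbourhood of $D(k_v)$. Because $D(k_v)\neq\emptyset$, over $k_v$ the group $G_0$ becomes split: $C^\circ(k_v)\cong k_v^{\ast}$ in the torus case (so $v$ splits in $k(D)$ when $D$ is irreducible, by the description in the proof of Lemma~\ref{lem:pointsatinftyconic}) or $C^\circ(k_v)\cong k_v$ in the additive case, with $D(k_v)$ the set of fixed points. In the multiplicative case an element $g$ acts by multiplication by a scalar $\lambda_g\in k_v^{\ast}$; if $|\lambda_g|_v\neq 1$ then the points $g^n\cdot P_0$ converge $v$-adically to one of the fixed points in $D(k_v)$ as $n\to+\infty$ or $n\to-\infty$, they are pairwise distinct, and all but finitely many lie in the given neighbourhood $W$. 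In the additive case it suffices to exhibit integral points of arbitrarily large $v$-adic size, since these approach the unique point of $D(k_v)$.

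It remains to produce such a \emph{hyperbolic at $v$} element, and this is the crux: a soft compactness argument using discreteness of $C^\circ(\O_{k,S})$ in $\prod_{w\in S}C^\circ(k_w)$ only yields accumulation at $D$ at \emph{some} place of $S$, so isolating the prescribed place $v$ is the real obstacle. I would resolve it with Dirichlet's $S$-unit theorem. In the split torus case $G=\O_{k,S}^{\ast}$ and $\lambda_g=g$; since $|S|\geq 2$ here (Lemma~\ref{lem:pointsatinftyconic}), the logarithmic embedding $g\mapsto(\log|g|_w)_{w\in S}$ has image a full lattice in the trace-zero hyperplane, which is not contained in $\{x_v=0\}$, giving a unit with $|g|_v\neq 1$. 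In the non-split torus case $G$ is the group of norm-one $S'$-units of $k'=k(D)$; writing $v_1,v_2$ for the two places of $k'$ above the split place $v$, one has $|\lambda_g|_v=|g|_{v_1}$, and the image lattice fills the subspace cut out by the block relations $\sum_{w'\mid w}x_{w'}=0$ for $w\in S$. Since the functional $x\mapsto x_{v_1}$ is not a combination of these block sums (the block over $v$ being $x_{v_1}+x_{v_2}$), the lattice meets $\{x_{v_1}\neq 0\}$, producing $g$ with $|\lambda_g|_v\neq 1$. Finally, for $\Ga$ one takes $\beta\in\O_{k,S}$ with $v(\beta)<0$ and $w(\beta)\geq 0$ for all finite $w\notin S$ (e.g.\ from a principal power of the prime at $v$), so that $\{\beta^n\}_{n\geq 1}\subset\O_{k,S}=G$ satisfies $|\beta^n|_v\to\infty$. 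Iterating the chosen element then yields infinitely many points of $C^\circ(\O_{k,S})$ in $W$, as required.
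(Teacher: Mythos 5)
Your proof is correct, and its skeleton is the same as the paper's: endow $C^\circ$ with its structure of (torsor under) a one\-dimensional group via an integral model, produce a group element that is non-trivial at the prescribed place $v$, and iterate it to accumulate at $D(k_v)$. The genuine difference is in how that element is produced and how the accumulation is argued. The paper splits into cases: for archimedean $v$ it uses discreteness of a cyclic subgroup plus compactness of $C(k_v)\setminus W$; for non-archimedean $v$ it uses density of $\O_{k,S}$ in $k_v$ (additive case) or cites Shyr's theorem together with the degree-map exact sequence \eqref{seq:Shyr} to find $g$ outside the maximal compact subgroup. You instead treat all places uniformly and make the torus input elementary: classical Dirichlet for $\O_{k,S}^{*}$ in the split case (your use of $|S|\geq 2$ from Lemma~\ref{lem:pointsatinftyconic} is exactly right), and in the non-split case a hand-rolled version of Shyr's theorem -- the norm map $\O_{k',S'}^{*}\to\O_{k,S}^{*}$ has finite-index image, so the norm-one log lattice has rank $|S'|-|S|$, is discrete, hence spans the block-sum-zero subspace, which is not contained in $\{x_{v_1}=0\}$. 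This buys self-containedness (no appeal to Shyr) at the cost of redoing a known unit theorem; the paper's route is shorter given the citation.

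Three minor points you should repair. First, your parenthetical that ``enlarging $S$'' changes $G$ only by finite index is false (it raises the unit rank); fortunately you never use it -- what you need is only the model-independence up to finite index, as in the paper's appeal to \cite[\S~5.1, 5.2]{Hassett2001}, together with the remark that replacing $g$ by a power preserves $|\lambda_g|_v\neq 1$. Second, your additive case is phrased only for non-archimedean $v$ (via $v(\beta)<0$); for archimedean $v$ one needs the trivial supplement that ordinary integers $n\in\Z\subset\O_{k,S}$ already have $|n|_v\to\infty$. Third, the assertion that the norm-one lattice ``fills'' the block-sum-zero subspace should carry the one-line justification above (rank count plus discreteness), since a rank-$d$ subgroup of a $d$-dimensional space need not span it without discreteness.
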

\begin{proof}
    We start with the perspective from \cite[\S~5.1, 5.2]{Hassett2001}. By \emph{loc.~cit.} there is an action of a $1$-dimensional linear algebraic group $G$ on $C^\circ$ which realises $C^\circ$ as a $G$-torsor, and this torsor action extends to a group scheme model $\mathcal{G}$ of $G$. On choosing an integral point on $C^\circ$, it suffices to prove the analogous result for $\mathcal{G}$, on identifying $C^\circ = G$, since the action of  $\mathcal{G}(\O_{k,S})$ on the fixed integral point allows us to view $\mathcal{G}(\O_{k,S}) \subset \mathcal{C}^\circ(\O_{k,S})$. Moreover as explained in \emph{loc.~cit.}, changing the model $\mathcal{G}$ potentially changes $\mathcal{G}(\O_{k,S})$ only up to a finite index subgroup. This will not affect our proof method, hence we may choose the model $\mathcal{G}$. It is also easy to see under our assumptions that there is an element $g \in \mathcal{G}(\O_{k,S})$ of infinite order (this follows from similar arguments to below).

    If $v$ is archimedean then the subgroup generated by $g$ is isomorphic to $\Z$ and it 
    embeds as a discrete subgroup of $G(k_v)$. However $C(k_v)$ is compact, thus $C(k_v) \setminus W$ is also compact. It follows that $\Z \setminus W$ is finite, thus $\Z \cap W$ is infinite, as required.

    If $v$ is non-archimedean we need to be more careful as the subgroup generated by a point need not be discrete (consider $\Z \subset \Z_p)$. Assume first that $\deg D = 1$; here we may take $\mathcal{G} = \Ga$. Then $\Ga(\O_{k,S}) = \O_{k,S}$ is dense in $k_v$, so the result is clear.

    Now assume that $\deg D = 2$, so that $G$ is a $1$-dimensional algebraic torus.     
    We use some properties of algebraic tori over local fields, which we now recall. Let $T$ be an algebraic torus over $k$ and $v$ a non-archimedean place of $k$ which is unramified in the splitting field of $T$. Let $T(\O_v)$ be the maximal compact subgroup of $T(k_v)$. Denote by $\widehat{T} = \Hom(T,\Gm)$ the character lattice of $T$. This is a Galois module which is finitely generated and free as an abelian group. Then the \emph{degree map}
    $$\deg_v: T(k_v) \to \Hom(\widehat{T}^{\Gamma_{v}},\Z), \quad t_v \mapsto \left( \, \chi_v \mapsto v(\chi_v(t_v)) \, \right)$$
    is well-defined and continuous where $\Gamma_{v} := \Gal(\bar{k}_v/k_v)$. By \cite[Lem.~3.9]{Bou11} it induces a short exact sequence
    \begin{equation} \label{seq:Shyr}
    0 \to T(\O_v) \to T(k_v) \to \Hom(\widehat{T}^{\Gamma_{v}}, \Z) \to 0.
    \end{equation}
    We use this as follows. Our torus $G$ is either $\Gm$ or the norm one torus $\mathrm{R}_{k(D)/k}^1 \Gm$, corresponding to a trivial or non-trivial Galois action on the character lattice, respectively. Moreover, in the latter case, as $D(k_v) \neq \emptyset$, the place $v$ is completely split in $k(D)$, so that the action of $\Gamma_{v}$ on the character lattice is trivial. In both cases, a theorem of Shyr \cite{Shy77} (a version of Dirichlet's theorem for algebraic tori) implies that $\mathcal{G}(\O_{k,S})/ \mathcal{G}(\O_{k,S \setminus \{v\} }) \cong \Z$. As $\mathcal{G}(\O_v)$ is a finite index subgroup of the maximal compact subgroup $G(\O_v)$, we deduce there is an element $g \in \mathcal{G}(\O_{k,S})$ such that $g \notin G(\O_v)$. In which case by \eqref{seq:Shyr} the degree $\deg_v(g)$ is non-trivial. It easily follows that $g$ generates an infinite discrete subgroup of $G(k_v)$, and then the same argument as in the archimedean case applies to complete the proof.
\end{proof}

\subsection{Del Pezzo surfaces} 
\begin{definition}
     A \emph{del Pezzo surface} over a field $k$ is a smooth, geometrically irreducible, and proper surface $X$ such that the anticanonical class $-K_X$ is ample. The \emph{degree} $d$ of $X$ is $K_X^2$; it is an integer in the range $1 \leq d \leq 9$. A \emph{line} on $X$ is a geometrically integral curve $L$ with $L\cdot K_X = -1$ and $L\cdot L = -1$.
\end{definition}

Of particular interest to us are del Pezzo surfaces of degree $2$. These are surfaces of degree $4$ in the weighted projective space $\P(1,1,1,2)$ of the form
$$X: w^2 = F(x,y,z)$$
where $\deg F = 4$, provided the characteristic of $k$ is not equal to $2$. The anticanonical map $X \to \P^2$ is given by projecting to $(x,y,z)$, and realises the surface as a double cover of $\P^2$ ramified in the smooth quartic curve $F(x,y,z) = 0$. This induces the \emph{Geiser involution} $\iota: X \to X$ which swaps the two sheets of the cover.

\subsection{Conic bundles}

\begin{definition}
    Let $X$ be a smooth projective surface $X$ over a field $k$. A \emph{conic bundle} is a morphism $\pi: X \to \mathbb P^1$ whose fibres are isomorphic to plane conics.
\end{definition}

The following facts are well-known and follow from the adjunction formula.
\begin{lemma} \label{lem:conic_bundle_facts}
	Let $\pi:X \to \P^1$ be a conic bundle surface with fibre class $C$. Then
	\begin{enumerate}
        \item $-K_X \cdot C = 2$.
        \item The singular fibers over $\bar{k}$ have the form $L_1+L_2$, with $L_i^2 = -1$ and $L_1\cdot L_2=1$, i.e.~they are isomorphic over $\bar{k}$ to two lines meeting in a single point.        
	 \end{enumerate}
\end{lemma}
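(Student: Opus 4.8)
The plan is to deduce everything from the adjunction formula, using that any two fibres of $\pi$ are linearly equivalent and hence that a fibre has self-intersection zero. Throughout I write $F = \pi^*(p)$ for the fibre over a point $p \in \P^1$, viewed as a divisor; since all points of $\P^1$ are linearly equivalent, $F \equiv \pi^*(p')$ for any other $p'$, and choosing $p'$ with $\pi^{-1}(p')$ disjoint from $F$ gives $F^2 = 0$. The same computation shows $F \cdot L = 0$ for any component $L$ of a fibre (a \emph{vertical} curve).

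For part (1), a general fibre $C$ is a smooth plane conic, hence isomorphic to $\P^1$, so $p_a(C) = 0$. Substituting $C^2 = 0$ and $p_a(C)=0$ into $2p_a(C) - 2 = C^2 + K_X \cdot C$ gives $K_X \cdot C = -2$, that is $-K_X \cdot C = 2$.

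For part (2), I would work over $\bar k$ and classify singular fibres via the rank of the defining quadratic form of the plane conic: as a scheme, a singular fibre is either a union of two distinct lines or a double line, and since $\pi^*(p)$ agrees with the scheme-theoretic fibre, this determines the divisor $F$. The first thing to do is rule out the double line, and this is where I expect the \emph{only real subtlety} to lie. If $F = 2L$ for an irreducible curve $L$, then $F \cdot L = 0$ forces $L^2 = 0$, while part (1) gives $-K_X \cdot L = 1$; adjunction then reads $2p_a(L) - 2 = L^2 + K_X \cdot L = -1$, forcing $p_a(L) = 1/2$, which is absurd. Thus smoothness of $X$ (through the integrality of arithmetic genus) excludes non-reduced fibres, and a singular fibre must be $F = L_1 + L_2$ with $L_1 \neq L_2$.

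It then remains to pin down the intersection numbers, which is routine bookkeeping. Each $L_i$ is a line, hence $\cong \P^1$ with $p_a(L_i) = 0$, and the fibre is connected so $m := L_1 \cdot L_2 \geq 1$. The vertical relations $F \cdot L_i = 0$ give $L_i^2 = -m$, and adjunction yields $K_X \cdot L_i = -2 - L_i^2 = m-2$. Summing over $i$ and comparing with part (1) gives
$$2 = -K_X \cdot F = -(K_X \cdot L_1 + K_X \cdot L_2) = 4 - 2m,$$
so $m = 1$. Hence $L_1 \cdot L_2 = 1$ and $L_i^2 = -1$ (with $K_X \cdot L_i = -1$), which is exactly the asserted description.
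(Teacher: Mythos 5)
Your proof is correct, and it is precisely the argument the paper has in mind: the paper states this lemma without proof, remarking only that the facts are ``well-known and follow from the adjunction formula,'' which is exactly the tool you use throughout (including the nice touch of ruling out a non-reduced fibre $F=2L$ via the half-integer arithmetic genus). Nothing to correct; your write-up simply fills in the details the paper leaves implicit.
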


To prove the existence of a conic bundle structure, we will use the following.

\begin{lemma} \label{lem:conic_bundle_exists}
Let $X$ be a del Pezzo surface over a field $k$ of characteristic $0$ with a rational point. Assume that there exists  a Galois invariant collection $\mathfrak{L} \subset X_{\bar{k}}$ of lines which are disjoint collections of pairs of lines meeting in a single point. Then $X$ admits a conic bundle structure with $\mathfrak{L}$ forming a subset of the singular fibres.
\end{lemma}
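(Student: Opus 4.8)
The plan is to construct the conic bundle directly from the linear system determined by the given collection $\mathfrak{L}$ of pairs of lines. Each pair $\ell_1 + \ell_2$ in $\mathfrak{L}$ consists of two lines meeting in a single point, so by Lemma~\ref{lem:conic_bundle_facts} one computes $(\ell_1+\ell_2)\cdot K_X = -2$ and $(\ell_1+\ell_2)^2 = \ell_1^2 + 2\ell_1\cdot\ell_2 + \ell_2^2 = -1 + 2 - 1 = 0$. Thus the class $C := \ell_1 + \ell_2$ is a curve of arithmetic genus $0$ with $-K_X \cdot C = 2$ and self-intersection $0$; moreover, since the pairs in $\mathfrak{L}$ are disjoint from one another, all pairs represent the \emph{same} divisor class $C$ in $\Pic X_{\bar k}$ (two disjoint curves with $C^2 = 0$ lying in a del Pezzo surface must be linearly equivalent, as they are fibres of a common pencil). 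First I would verify this numerical computation and deduce that $C$ is a conic class in the sense of the paper.

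Next I would show that $C$ moves in a base-point-free pencil. Because $X$ is a del Pezzo surface, Riemann--Roch together with Kodaira-type vanishing (valid in characteristic $0$) gives $h^0(X_{\bar k}, \O(C)) = 1 + \tfrac{1}{2}(C^2 - C\cdot K_X) = 1 + \tfrac{1}{2}(0+2) = 2$, so $|C|$ is a pencil. Since $C^2 = 0$ and $|C|$ contains the disjoint members of $\mathfrak{L}$, the pencil is base-point free, and the associated morphism $\pi_{\bar k}: X_{\bar k} \to \P^1$ realises $C$ as a fibre class. The generic fibre is a smooth curve of arithmetic genus $0$ with $-K_X$-degree $2$, hence a plane conic, and the degenerate members of $\mathfrak{L}$ appear among the singular fibres as required. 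This uses only the del Pezzo hypothesis and the genus-$0$, square-zero nature of $C$.

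The remaining, and genuinely arithmetic, point is \emph{descent}: the morphism $\pi_{\bar k}$ is defined over $\bar k$ and I must show it descends to a morphism $\pi: X \to \P^1$ over $k$. The class $C$ is Galois-invariant because $\mathfrak{L}$ is a Galois-invariant collection, so the line bundle $\O(C)$ and hence the pencil $|C|$ are Galois-stable; this gives a Galois action on $H^0(X_{\bar k}, \O(C))$ and thus a morphism to a Brauer--Severi variety, a twisted form of $\P^1$ over $k$. To conclude that this twisted form is actually $\P^1$ over $k$ (so that $\pi$ is a genuine conic bundle in the sense of the definition), I would invoke the hypothesis that $X$ has a $k$-rational point: the rational point lies on some fibre, giving a $k$-point of the base curve, and a Brauer--Severi curve with a rational point is isomorphic to $\P^1_k$. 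I expect this descent step to be the main obstacle, since one must be careful that the Galois action on the two-dimensional space of sections is linear (which follows from $\O(C)$ being a genuine $k$-rational line bundle class, guaranteed by Galois-invariance of $C$ and the fact that on a del Pezzo surface numerical and linear equivalence of such classes coincide), and that the resulting base is split by the rational point.

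Thus the overall structure is: (i) compute intersection numbers to identify $C$ as a conic class; (ii) use Riemann--Roch and vanishing to produce a base-point-free pencil $|C|$ and the geometric morphism $\pi_{\bar k}$; (iii) descend to $k$ using Galois-invariance of $\mathfrak{L}$ and the rational point to trivialise the Brauer--Severi base. The pairs in $\mathfrak{L}$ then sit inside the singular fibres by construction, completing the proof.
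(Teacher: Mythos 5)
Your proposal is correct and follows essentially the same route as the paper: identify the class of a pair of lines as a Galois-invariant conic class, then use the rational point to descend the resulting genus-zero pencil to a conic bundle morphism $X \to \P^1_k$. The only cosmetic differences are that you establish linear equivalence of the pairs (and hence Galois invariance of $C$) via the Hodge index theorem, where the paper invokes torsion-freeness of $\Pic X_{\bar{k}}$ applied to $[\mathfrak{L}] = nC$, and you work with the pencil $|C|$ where the paper maps out via the full linear system $|\mathfrak{L}|$ and identifies its genus-zero image with $\P^1$ using the rational point.
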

\begin{proof}
    This is a special case of \cite[Prop.~5.2]{Frei2018RationalSurfaces}.
    We sketch a proof, since it also furnishes an effective method to construct a conic bundle structure explicitly. Moreover the Stein factorisation step at the end of \emph{loc.~cit.} is not actually required. 
    
    Assume that $\mathfrak{L}$ consists of $2n$ lines. A cohomological calculation shows that $\mathfrak{L} \in \Pic X_{\bar{k}}$ is the class of $n$ singular fibres of a conic bundle, i.e~$[\mathfrak{L}] = nC$ where $C \in \Pic X_{\bar{k}}$ is the class of the fibre of a conic bundle. Since $\Pic X_{\bar{k}}$ is torsion free we see that $C$ is Galois invariant; as $X(k) \neq \emptyset$ the class $C$ thus descends to $k$.
    
    One achieves an explicit Galois descent and construction of the map as follows. We have $\dim \H^0(X,\mathfrak{L}) = 2^n$. The image of the induced map $X \to \P^{2^n-1}$ is a curve of genus $0$. As $X(k) \neq \emptyset$ this curve has a rational point. Thus choosing an isomorphism of this curve with $\P^1$ yields a map $X \to \P^1$ which is the required conic bundle.
\end{proof}

\section{Double conic bundle surfaces} \label{sec:double_conic}

The aim of this section is to prove Theorem \ref{thm:generalisation}.

\subsection{Fundamental groups}
We begin with some results on fundamental groups of complements of divisors and blow-ups.
Let $k$ be a field.

In the following $\pi_1$ denotes the \'etale fundamental group, which should not be confused with a conic bundle morphism (this conflict will occur nowhere else).

\begin{lemma} \label{lem:blow_down_fundamental group}
    Let $X$ be a smooth proper surface over $k$ and $D \subset X$ an effective divisor. Let $E \subset X$ be a collection of pairwise disjoint $(-1)$-curves and $f: X \to X'$ the blow down of $E$. Then $\pi_1(X \setminus (D \cup E)) \cong \pi_1( X' \setminus f(D))$.
\end{lemma}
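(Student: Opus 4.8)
The plan is to reduce the statement to the fact that deleting finitely many closed points from a smooth surface does not change the étale fundamental group. First I would record the geometry of the contraction: since $f$ blows down the pairwise disjoint $(-1)$-curves in $E$, the surface $X'$ is smooth and $f$ is an isomorphism over the complement of the finite set $P := f(E) \subset X'$ of closed points, with $f^{-1}(P) = E$ set-theoretically. Thus $f$ restricts to an isomorphism $X \setminus E \xrightarrow{\sim} X' \setminus P$ of smooth surfaces.

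Next I would identify the two relevant open sets up to finitely many points. Restricting the isomorphism above to the complement of $D$ gives
\[
 X \setminus (D\cup E) \;\cong\; (X'\setminus P)\setminus f(D\setminus E) \;=\; X'\setminus\bigl(f(D\setminus E)\cup P\bigr).
\]
Since $f(D) = f(D\setminus E)\cup f(D\cap E)$ and $f(D\cap E)\subseteq f(E)=P$, one has $f(D\setminus E)\cup P = f(D)\cup P$, whence
\[
 X\setminus(D\cup E)\;\cong\; X'\setminus\bigl(f(D)\cup P\bigr)\;=\;\bigl(X'\setminus f(D)\bigr)\setminus Z, \qquad Z:= P\setminus f(D).
\]
Here $Z$ is a finite set of closed points, all lying in the smooth locus $X'\setminus f(D)$; concretely they correspond to those components of $E$ that are disjoint from $D$ (the components meeting $D$ instead contribute points of $f(D)$).

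Finally, I would invoke that the étale fundamental group is insensitive to removing a closed subset of codimension $\geq 2$ from a regular scheme. By Zariski--Nagata purity of the branch locus, any finite étale cover of $(X'\setminus f(D))\setminus Z$ extends uniquely to a finite étale cover of $X'\setminus f(D)$: its normalisation can ramify only in codimension $1$, hence nowhere over the codimension-$2$ set $Z$. This gives an equivalence of the categories of finite étale covers, and therefore $\pi_1\bigl((X'\setminus f(D))\setminus Z\bigr)\cong \pi_1(X'\setminus f(D))$; combined with the isomorphism above this proves the lemma. The same argument applies verbatim after base change to $\bar k$, so the conclusion also holds for the geometric fundamental group appearing in the simple-connectivity hypothesis of Theorem~\ref{thm:generalisation}.

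The main point to get right is the bookkeeping of the second step — distinguishing components of $E$ that meet $D$ from those that are disjoint from it, so that the difference between $X\setminus(D\cup E)$ and $X'\setminus f(D)$ is exactly the finite point set $Z$ — together with a citation of purity in the generality needed (an arbitrary regular $X'\setminus f(D)$, with no characteristic restriction). This purity input is the only nontrivial ingredient; everything else is an elementary identification of open subschemes.
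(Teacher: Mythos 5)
Your proof is correct and follows essentially the same route as the paper's: identify $X\setminus(D\cup E)$ with the complement in $X'$ of $f(D)$ together with the finitely many points coming from components of $E$ disjoint from $D$, then invoke Zariski--Nagata purity to see that deleting those points does not change the \'etale fundamental group. The only difference is presentational (you handle the bookkeeping with set identities at once, while the paper argues component by component), so there is nothing substantive to add.
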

\begin{proof}
    The map $f$ induces an isomorphism $X \setminus (D \cup E) \cong X' \setminus f(D \cup E)$. Thus it suffices to show that $\pi_1( X' \setminus f(D)) = \pi_1( X' \setminus f(D \cup E))$.
    Let $L \subset E$ be an irreducible component. If $L \cap D \neq \emptyset$ then $f(D \cup L) = f(D)$ so there is nothing to prove. If $L \cap D = \emptyset$, then $f(D \cup L) = f(D) \sqcup f(L)$ is a disjoint union. The result then follows from the fact that removing a point from a smooth surface does not change the fundamental group, by Zariski purity of the branch locus.
\end{proof}

\begin{lemma} \label{lem:not_simply_connected}
    Let $X$ be a smooth proper surface over $k$ which admits two conic bundle
    structures $\pi_i: X \to \P^1$ with fibre classes $C_1,C_2$, respectively, such that $C_1 \cdot C_2 = 1$. Let $E$ be the divisor of curves which are constant with respect to both fibrations and $D \subset X$ an integral divisor with $C_1 \cdot D = C_2 \cdot D = 2$. Then $X \setminus (D \cup E)$ is not simply connected.
\end{lemma}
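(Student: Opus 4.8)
The goal is to show that $X \setminus (D \cup E)$ is not simply connected, and the natural strategy is to blow down to $\P^1 \times \P^1$ and analyze the complement of the image of $D$ there. The plan is as follows. First I would use the two conic bundle structures together to build a morphism $\phi = (\pi_1, \pi_2): X \to \P^1 \times \P^1$. Since $C_1 \cdot C_2 = 1$, a general fibre of $\pi_1$ meets a general fibre of $\pi_2$ in exactly one point, so $\phi$ is birational. The exceptional locus of $\phi$ is precisely the divisor $E$ of curves contracted by both fibrations (components that are vertical for $\pi_1$ and vertical for $\pi_2$ simultaneously), and $\phi$ contracts exactly these to points; after passing to $\bar{k}$ these are disjoint $(-1)$-curves, so $\phi$ realizes $X$ as a sequence of blow-ups of $\P^1 \times \P^1$ centred at finitely many points.

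Next I would apply Lemma \ref{lem:blow_down_fundamental group} with $X' = \P^1 \times \P^1$ and $f = \phi$ to reduce the computation: it gives
$$\pi_1\bigl(X \setminus (D \cup E)\bigr) \cong \pi_1\bigl( (\P^1 \times \P^1) \setminus \phi(D)\bigr).$$
(One must check the components of $E$ are pairwise disjoint $(-1)$-curves over $\bar{k}$, which follows from Lemma \ref{lem:conic_bundle_facts}(2) applied to both fibrations.) So it suffices to show that the complement of the curve $\bar{D} := \phi(D)$ in $\P^1 \times \P^1$ is not simply connected. The conditions $C_1 \cdot D = C_2 \cdot D = 2$ translate, under $\phi$, into the statement that $\bar{D}$ is a divisor of bidegree $(2,2)$ on $\P^1 \times \P^1$: indeed the intersection number of $\bar{D}$ with a fibre of the first projection equals $C_2 \cdot D = 2$ and with a fibre of the second projection equals $C_1 \cdot D = 2$.

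Finally, I would compute $\pi_1$ of the complement of a $(2,2)$-curve in $\P^1 \times \P^1$. The cleanest route is a topological/cohomological one: a smooth effective divisor class $\bar{D}$ of bidegree $(2,2)$ is divisible—here the relevant observation is that the class $\bar{D} = (2,2)$ is $2$ times the primitive class $(1,1)$ in $\mathrm{Pic}(\P^1 \times \P^1) \cong \Z^2$—so the complement admits a nontrivial double cover ramified along $\bar{D}$, equivalently there is a nonzero map $\pi_1\bigl((\P^1\times\P^1)\setminus \bar D\bigr) \to \Z/2\Z$ coming from the fact that $[\bar D]$ is even in the Picard group. This exhibits a nontrivial finite quotient of the fundamental group, so the complement is not simply connected and hence neither is $X \setminus (D \cup E)$.

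\textbf{Main obstacle.} The delicate point I expect is the reduction step: verifying that $E$ is exactly the exceptional locus of $\phi$ and that its components are disjoint $(-1)$-curves, so that Lemma \ref{lem:blow_down_fundamental group} applies cleanly. One subtlety is that $D$ might pass through, or be tangent to, the points that $\phi$ blows down, which could make $\phi(D)$ singular; but since the final argument only needs the \emph{divisibility} of the class $[\bar D]$ in $\mathrm{Pic}$ to produce the double cover—and this is insensitive to mild singularities of $\bar D$—the divisibility/parity argument should survive even if $\bar D$ is not smooth. The care required is to phrase the existence of the nontrivial double cover purely in terms of the even divisor class rather than requiring $\bar D$ to be smooth.
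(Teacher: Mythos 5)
Your proposal is correct, and its endgame coincides with the paper's: both reduce via Lemma \ref{lem:blow_down_fundamental group} to a minimal rational surface and then conclude because the image of $D$ has a $2$-divisible class, so the Picard group of the complement has non-trivial $2$-torsion, equivalently there is a non-trivial connected \'etale double cover. Where you genuinely differ is the reduction step. The paper blows down one component of each singular fibre of $\pi_1$, lands on an a priori unknown Hirzebruch surface $\F_n$, and then needs an intersection-theoretic analysis (using the integrality of $D$ and the image of the second fibre class) to rule out $n \geq 2$; it must then handle both surviving cases $\F_0$ and $\F_1$. You instead exploit $C_1 \cdot C_2 = 1$ to see that $\phi = (\pi_1,\pi_2) \colon X \to \P^1 \times \P^1$ is a birational morphism whose exceptional locus is exactly $E$, so the target surface is pinned down immediately and the Hirzebruch case analysis disappears. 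This is a real simplification: it buys directness, and it shows in passing that contracting $E$ genuinely yields $\P^1 \times \P^1$ (the paper's $\F_1$ case is only a numerical possibility, never realised). What the paper's route buys is that it needs less from the map: one never has to verify that $(\pi_1,\pi_2)$ is a morphism, birational, and contracts precisely $E$, only that some contraction of vertical curves reaches a relatively minimal model.

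Two points in your sketch should be made precise, though neither is a genuine gap. First, disjointness of the components of $E$ does not follow from Lemma \ref{lem:conic_bundle_facts}(2) alone: you need $C_1 \cdot C_2 = 1$ again, since for a singular fibre $L_1 + L_2$ of $\pi_1$ one has $L_1 \cdot C_2 + L_2 \cdot C_2 = 1$ with both terms non-negative, so exactly one component is constant for $\pi_2$; hence $E$ contains at most one component of each singular fibre, and its components are pairwise disjoint $(-1)$-curves, which is what makes $\phi$ the contraction of $E$ in the sense of Lemma \ref{lem:blow_down_fundamental group}. Second, for the parity argument you need $\bar{D} = \phi(D)$ to be \emph{integral} of class $(2,2)$, not merely that the cycle $\phi_* D$ has that class: the Picard group of the complement is $\Z^2$ modulo the classes of the irreducible components of the removed curve, and this quotient could be torsion-free if the image were, say, a $(1,1)$-curve traversed with multiplicity two. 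This is fine here because $D$ is integral, is not contracted (as $D \cdot C_1 = 2 > 0$), and $\phi$ is birational, so $\phi|_D$ is birational onto its image; as you observe, singularities of $\bar{D}$ are then harmless for the divisibility argument.
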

\begin{proof}
    We may assume that $k$ is algebraically closed. Blowing down fibre components of $\pi_1$, we obtain a map $f:X \to X'$ to a smooth projective surface which admits a conic bundle with smooth fibres, i.e.~a ruling. We now carefully analyse the possibilities for $X'$.

    Recall that the ruled surfaces are exactly the Hirzebruch surfaces $\F_n$ for $n \geq 0$. Denote by $F$ the fibre class of the ruling and $E$ the zero section. These classes generate the effective cone of $\F_n$, and we have the intersection numbers
    $$E^2 = -n, \quad F^2 = 0, \quad E \cdot F = 1.$$
    In our setting we are given integral curves $C,D \subset \F_n$ such that
    $$F \cdot C = 1, \quad F \cdot D = C \cdot D = 2.$$
    We analyse the possibilities for these curves. Write $C = e_1E + f_1F$ and $D = e_2E + f_2F$ for $e_i,f_i \geq 0$ integers. From $F \cdot C = 1$ and $F \cdot D = 2$ we obtain $e_1 = 1$ and $e_2 = 2$. From $C \cdot D = 2$ a short calculation yields $2f_1 + f_2 = 2n + 2$. As $D$ is integral we must have $f_2 > 0$, since $2E$ is not integral. Moreover, clearly $f_2$ is even. It follows that $f_1 \leq n$. However for $n \geq 1$, 
    the linear system $|C|=|E + f_1 F|$ contains no integral elements, provided that $0 \leq f_1 < n$. Indeed we have $E \cdot (E + f_1F) = -n + f < 0$, thus $E$ is a fixed component of the linear system. We deduce that $f_1 = n$, hence $C = E + nF$ and $D = 2E + 2F$. But then $E \cdot D = 2 - 2n$, so that $E$ is a fixed component of $|D|$ providing $n \geq 2$, which contradicts that $D$ is integral.
    
    Returning to our original proof, we conclude that $n = 0,1$, i.e.~that $X' = \P^1 \times \P^1$ with the conic bundles inducing the two rulings, or $X'$ is blow-up of $\P^2$ in a single point with one conic bundle corresponding to the ruling and the other coming from pulling-back a pencil of lines in $\P^2$ not passing through the blown-up point.
    Applying Lemma \ref{lem:blow_down_fundamental group} we find that the fundamental group of $X \setminus (D \cup E)$ is the same as $X' \setminus f(D)$. However in both cases $f(D)$ has class $2E + 2F$. Then the complement is not simply connected, since the Picard group has non-trivial $2$-torsion, as required.
\end{proof}

\subsection{Preparations}
We now take the set up of Theorem \ref{thm:generalisation}. We recall our hypotheses:
    \begin{enumerate}
        \item $X \setminus (D\cup E)$ is simply connected, where $E$ is the union of curves that are constant for both conic bundles. \label{hyp:1}
        \item For all $P \in \P^1(k)$ we have $\pi_1^{-1}(P) \cdot D = \pi_2^{-1}(P) \cdot D = 2$.
        \label{hyp:3}         
        \item $\mathcal{X}\setminus \mathcal{D}$ admits an $\O_{k,S}$-integral point which lies on a smooth conic $C:=\pi_1^{-1}(P_0)$ such that \label{hyp:2}
        \begin{enumerate}
             \item $C^\circ(\O_{k,S})$ is infinite. \label{hyp:2a}
             \item If $k = \Q$ or an imaginary quadratic extension of $\Q$ and $|S| = 1$, 
             then the set $\pi_2(C^\circ(\O_{k,S})) \cap \pi_2(D(k)) \subset \P^1(k)$ is finite. \label{hyp:2b}
            \item The map $\pi_2|_D : D \to \P^1$ is unramified along $C \cap D$. \label{hyp:2c}
		\end{enumerate} 
    \end{enumerate}
For $P \in \mathbb P^1$,  we denote by $C_{1,P}$ and $C_{2,P}$ the fibre above $P$ with respect to the fibration $\pi_1$ and $\pi_2$, respectively. Recall our notation and conventions for integral points and models from Definition \ref{def:integral_model}.

The fact that the conic bundles in Theorem \ref{thm:generalisation} are distinct means exactly that $C_{1,P} \cdot C_{2,P} > 0$. However, given hypothesis \eqref{hyp:1} on simply connectedness of $X\setminus(D \cup E)$, by Lemma \ref{lem:not_simply_connected} we may assume that actually
\begin{equation} \label{eqn:not_a_section}
    C_{1,P} \cdot C_{2,P} > 1.
\end{equation}
Despite this improvement looking relatively minor, actually it is fundamental to our method. It is used to prove a key finiteness result (Lemma \ref{lem:conic_finiteness}), which is used multiple times in our proof.

We first consider the assumptions in \eqref{hyp:2}.
For \eqref{hyp:2a}, by Lemma~\ref{lem:pointsatinftyconic} there is a simple criterion for whether a given conic with an integral point admits infinitely many. Condition \eqref{hyp:2b} is fairly mild as the following shows. In the following statements the same results naturally hold if the roles of $\pi_1$ and $\pi_2$ are swapped.

\begin{lemma} \label{lem:disjoint_branch_locus}
    Let $\Omega \subset \P^1$ be a finite collection of closed points. Then for all but finitely many $P \in \P^1(k)$, the branch locus of the map $\pi_2: C_{1,P} \to \P^1$, induced by $\pi_2$, is disjoint from $\Omega$.
\end{lemma}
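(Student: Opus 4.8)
The plan is to package the branch points of all the maps $\pi_2|_{C_{1,P}}$ into a single auxiliary curve inside $\P^1\times\P^1$ and then to show that this curve contains no ``horizontal'' component, i.e.\ that no fixed $Q$ serves as a branch point for infinitely many $P$. Since the exceptional locus I produce will be defined over $k$, I would first base change to $\bar k$ and bound the bad geometric points $P \in \P^1(\bar k)$; and since $\pi_1$ has only finitely many singular fibres, I would discard them and work over the open set $U \subseteq \P^1$ parametrising smooth fibres $C_{1,P}$.

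First I would introduce the morphism $\Phi = (\pi_1,\pi_2): X \to \P^1\times\P^1$. As the two conic bundles are distinct we have $C_{1,P}\cdot C_{2,P} > 0$, so $\Phi$ is dominant and generically finite; let $R \subset X$ be its ramification divisor and $B = \Phi(R)$ its branch locus, a closed subset of dimension at most one. The first step is the elementary differential observation that, for $P \in U$, the finite map $\pi_2|_{C_{1,P}}$ ramifies at a point $x$ exactly when $T_x C_{1,P} \subset \ker d\Phi_x$, that is, exactly when $C_{1,P}$ is tangent to the fibre $C_{2,\pi_2(x)}$ at $x$, which is precisely the condition $x \in R$. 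Consequently every branch point $Q$ of $\pi_2|_{C_{1,P}}$ satisfies $(P,Q) \in B$.

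Granting this, I would finish as follows: for each of the finitely many $Q \in \Omega$, if $B$ does not contain the horizontal line $\P^1 \times \{Q\}$, then $B \cap (\P^1\times\{Q\})$ is finite, so only finitely many $P$ can have $Q$ as a branch point of $\pi_2|_{C_{1,P}}$; taking the union over $\Omega$ and adjoining the finite set $\P^1 \setminus U$ yields the claimed finite set of exceptional $P$.

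The hard part is therefore to rule out horizontal components of $B$. Such a component would be the image of a component $R_0 \subseteq R$ with $\pi_2(R_0)$ a single point $Q_0$, forcing $R_0 \subseteq C_{2,Q_0}$; thus $R_0$ would be a component of a fibre of $\pi_2$ lying inside $R$. I would exclude this using $\mathrm{char}\,k = 0$. A smooth fibre $C_{2,Q_0}$ cannot lie in $R$, because $\pi_1|_{C_{2,Q_0}}$ is a finite separable morphism of smooth curves and so is tangent to the $\pi_1$-fibres (equivalently, meets $R$) in only finitely many points. For a singular fibre $C_{2,Q_0} = L_1 + L_2$ (Lemma \ref{lem:conic_bundle_facts}), each component $L_i$ is either contracted by $\pi_1$ as well --- in which case $L_i \subseteq E$ and $\Phi(L_i)$ is a point rather than a horizontal curve --- or maps finitely to $\P^1$ under $\pi_1$, in which case the same tangency argument gives $L_i \not\subseteq R$. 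In every case $B$ has no horizontal component, and I expect the main difficulty of the argument to be precisely this characteristic-zero tangency analysis of the fibre components.
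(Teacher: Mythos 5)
Your proof is correct, and its engine is the same as the paper's: everything reduces to the fact that, in characteristic zero, a fixed fibre $C_{2,Q}$ of $\pi_2$ can be tangent to (equivalently, have non-reduced intersection with) only finitely many fibres of $\pi_1$, proved by restricting $\pi_1$ to $C_{2,Q}$ and using separability. The organizational route differs, though. The paper argues directly per point: fix $Q \in \Omega$, observe that ramification of $\pi_2|_{C_{1,P}}$ over $Q$ forces the scheme $C_{1,P} \cap C_{2,Q}$ to be non-reduced, note that $C_{2,Q}$ is reduced by Lemma \ref{lem:conic_bundle_facts}, and conclude by generic smoothness of $\pi_1|_{C_{2,Q}}$ in characteristic zero that only finitely many such $P$ exist. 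Your global branch correspondence $B = \Phi(R) \subset \P^1 \times \P^1$ is strictly more machinery than is needed: since $\Omega$ is finite one may fix $Q$ from the outset, and your ``no horizontal components'' step is exactly the paper's per-$Q$ argument in disguise (a horizontal component over $Q_0$ is the same thing as infinitely many fibres of $\pi_1$ tangent to $C_{2,Q_0}$). What your version buys is some extra rigour at the degenerate cases: you explicitly dispose of fibre components contracted by both fibrations (which lie in $E$, hence in $R$, but map to points rather than horizontal curves), a case that the paper's phrase ``generically smooth'' silently absorbs; you also make precise, via $\ker d\Phi_x$, why ramification at nodes of singular fibres causes no trouble. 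Conversely, the paper's formulation avoids having to introduce the ramification divisor of the generically finite map $\Phi$ and to verify that tangencies, nodes, and contracted curves are all captured by it. One small point worth adding to your write-up: when you assert that $\pi_1|_{C_{2,Q_0}}$ is finite for a smooth fibre $C_{2,Q_0}$, this needs the observation that a smooth $\pi_2$-fibre (self-intersection $0$) cannot be a fibre of $\pi_1$ (this would force $C_1 \cdot C_2 = 0$, contradicting distinctness) nor a component of a singular $\pi_1$-fibre (those have self-intersection $-1$).
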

\begin{proof}
    If $\pi_2: C_{1,P} \to \P^1$ ramifies over a point $Q \in \P^1$, then $C_{1,P} \cap C_{2,Q}$ is non-reduced. However, by Lemma \ref{lem:conic_bundle_facts}, the conic $C_{2,Q} \subset X$ is reduced. Therefore, as $k$ has characteristic $0$, the restriction $\pi_1: C_{2,Q} \to \P^1$ is generically smooth, which implies that $C_{1,P} \cap C_{2,Q}$ is non-reduced for only finitely many $P$. The result now easily follows.
\end{proof}

\begin{lemma} \label{lem:conic_finiteness}
    Let $f: Y \to \P^1$ be a finite
    morphism with $Y$ a smooth projective curve and $\deg f > 1$. 
    Let $P \in \P^1(k)$ and $C:=\pi_1^{-1}(P)$. 
	The set $\pi_2(C^\circ(\O_{k,S})) \cap f(Y(k)) \subset \P^1(k)$ is finite if
	\begin{enumerate}
		\item the genus of $Y$ is at least $1$, or
		\item the scheme $C \cap D$ is reduced 
        and the maps $f: Y \to \P^1$ and $\pi_2|_C: C \to \P^1$ have disjoint branch loci.
	\end{enumerate}
	Moreover, this finiteness holds for all but finitely many $P \in \P^1(k)$.
\end{lemma}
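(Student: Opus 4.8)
The plan is to translate the statement into a question about integral points on a fibre product and then apply Siegel's theorem in case (1) and Lemma~\ref{lemma:5.3} in case (2). Write $\phi := \pi_2|_C : C \to \P^1$, a finite surjective morphism of degree $C_1 \cdot C_2 > 1$ by \eqref{eqn:not_a_section}. Form the fibre product $Z := C \times_{\P^1} Y$ with respect to $\phi$ and $f$, with projections $p_C : Z \to C$ and $p_Y : Z \to Y$, and let $\tilde Z_1, \dots, \tilde Z_r$ be the irreducible components of the normalisation of $Z$. Any $Q$ in the set of interest has the form $Q = \phi(x) = f(y)$ with $x \in C^\circ(\O_{k,S})$ and $y \in Y(k)$, so that $(x,y) \in Z(k)$. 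Away from the finitely many singular points of $Z$, each such pair lifts to a $k$-point of some $\tilde Z_i$ whose image under $p_C$ is the integral point $x$. It therefore suffices to prove that $p_C\big(\tilde Z_i(k)\big) \cap C^\circ(\O_{k,S})$ is finite for every $i$, since applying $\pi_2$ then bounds the original set. First I note that every $\tilde Z_i$ dominates both $C$ and $Y$: a component contracted by $p_Y$ would map to a single point of $\P^1$, hence lie in a fibre of $\phi$, forcing it to be a point, and symmetrically for $p_C$.

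For part (1) assume $g(Y) \geq 1$. As $p_Y|_{\tilde Z_i} : \tilde Z_i \to Y$ is finite and surjective, Riemann--Hurwitz gives $g(\tilde Z_i) \geq g(Y) \geq 1$. Since $C \cdot D = 2$ by \eqref{hyp:3} the divisor $C \cap D$ is nonempty, so its preimage on $\tilde Z_i$ is a nonempty finite set and $\tilde Z_i \setminus p_C^{-1}(C \cap D)$ is an irreducible affine curve of genus $\geq 1$. A lift of an integral point $x \in C^\circ(\O_{k,S})$ is an $S$-integral point of this affine curve for the pulled-back model, so Theorem~\ref{thm:siegel} shows there are only finitely many, and a fortiori $p_C(\tilde Z_i(k)) \cap C^\circ(\O_{k,S})$ is finite.

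For part (2) I apply Lemma~\ref{lemma:5.3} to the finite map $p_C|_{\tilde Z_i} : \tilde Z_i \to C$ and the reduced degree-$2$ divisor $C \cap D$. Write $B_f, B_\phi \subset \P^1$ for the branch loci of $f$ and $\phi$, which are disjoint by hypothesis. Because $f$ is \'etale over $\P^1 \setminus B_f$ and \'etaleness is stable under base change, the branch locus of $p_C$ is contained in $\phi^{-1}(B_f)$. Hence, over any point of $C \cap D$ whose $\phi$-image avoids $B_f$, the map $p_C|_{\tilde Z_i}$ is unramified and so, having degree $>1$, is not totally ramified there: condition (1) of Lemma~\ref{lemma:5.3} holds. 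In the remaining case $\phi(C \cap D) \subseteq B_f$ I seek a branch point outside $C \cap D$; since a degree-$(>1)$ cover of $\P^1$ in characteristic $0$ ramifies over at least two points we have $|B_f| \geq 2$, and choosing $Q_0 \in B_f \setminus \phi(C\cap D)$ when possible, every point of $\phi^{-1}(Q_0)$ lies off $C \cap D$ and is a ramification point of $p_C$ (\'etale-locally over $Q_0$ the projection $p_C$ is isomorphic to $f$, which ramifies there, as $\phi$ does not). Finally, the disjointness hypothesis rules out a component on which $p_C$ has degree $1$: such a section would realise $\phi$ as a subcover of $f$ and force $B_\phi \subseteq B_f$, impossible since both are nonempty and disjoint. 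This last degenerate situation is precisely the one in which the conclusion would otherwise fail.

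For the closing assertion, condition (1) is insensitive to $P$, so if $g(Y) \geq 1$ the finiteness holds for all $P$. If $g(Y) = 0$ we invoke condition (2): reducedness of $C_{1,P} \cap D$ fails for only finitely many $P$ (those fibres meeting $D$ non-transversally), and Lemma~\ref{lem:disjoint_branch_locus} applied with $\Omega = B_f$ shows that the branch locus of $\pi_2 : C_{1,P} \to \P^1$ is disjoint from $B_f$ for all but finitely many $P$. The step I expect to be most delicate is the ramification bookkeeping in part (2): matching the ramification of $p_C$ over $\phi^{-1}(B_f)$ with the individual components $\tilde Z_i$ so as to guarantee, for each component, a branch point genuinely outside $C \cap D$ — together with the borderline configurations where $\deg\phi = 2$ and $\phi(C \cap D)$ already exhausts $B_f$.
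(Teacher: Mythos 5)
Your reduction to the fibre product and your treatment of case (1) are sound and essentially identical to the paper's proof (which forms $\mathcal{C} \times_{\P^1} \mathcal{Y}$ and applies Theorem~\ref{thm:siegel}, resp.\ Lemma~\ref{lemma:5.3}). The genuine gap is in case (2), and it is exactly the ``borderline configuration'' you flag at the end. The missing ingredient is that, under the disjoint-branch-loci hypothesis, the fibre product $Z = C \times_{\P^1} Y$ is already \emph{smooth and geometrically irreducible}: this is \cite[Lem.~2.8]{Streeter2021HilbertVarieties}, which the paper invokes at precisely this point (it follows from linear disjointness: a subextension of $k(C)/k(\P^1)$ contained in the Galois closure of $k(Y)/k(\P^1)$ is unramified over all of $\P^1$, hence trivial). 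Because you never establish irreducibility, you are forced to argue component by component on the normalisation, and there the argument breaks: a branch point of $Z \to C$ need not be a branch point of a fixed component $\tilde Z_i \to C$, and in the case $\phi(C \cap D) \subseteq B_f$ (which can certainly occur, e.g.\ $|B_f| = 2$ with both points in $\phi(C\cap D)$) there is no $Q_0$ to choose, and nothing rules out a priori a component whose only branch points are the two points of $C \cap D$ with total ramification there --- precisely the situation in which Lemma~\ref{lemma:5.3} gives nothing.

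Once irreducibility is in hand, the correct counting is done \emph{upstairs on $C$}, not downstairs on $\P^1$: since $\phi$ is unramified over $B_f$ (disjoint branch loci), the branch locus of $q : Z \to C$ is all of $\phi^{-1}(B_f)$, which has $\deg\phi \cdot |B_f| \geq 2 \cdot 2 = 4$ geometric points, using $\deg \phi \geq 2$ from \eqref{eqn:not_a_section} and $|B_f| \geq 2$ since $\deg f > 1$ in characteristic $0$. As $C \cap D$ is reduced of degree $2$, at least two of these branch points lie outside the support of $C \cap D$, so condition (2) of Lemma~\ref{lemma:5.3} holds for $q$, with no case distinction on whether $B_f$ meets $\phi(C \cap D)$. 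This is the paper's proof, and it is how your ``delicate'' case should be closed. Two minor further points: your degree-one-component argument has the containment backwards --- a section $s: C \to Y$ with $f \circ s = \phi$ forces $B_f \subseteq B_\phi$, not $B_\phi \subseteq B_f$, though the contradiction with disjointness and non-emptiness survives either way --- and once irreducibility is known that step is not needed at all. Your handling of the final ``all but finitely many $P$'' assertion via Lemma~\ref{lem:disjoint_branch_locus} matches the paper.
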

\begin{proof}
    Let $\mathcal{Y} \to \P^1$ be a model for $Y$.
    Consider the fibre product 
    \begin{figure}[H]
       \centering
       \begin{tikzpicture}
          \node (a) at (-1,2) {$\mathcal{F} = \mathcal{C} \times_{\P^1} \mathcal{Y}$};
            \node (b) at (2,2) {$\mathcal{Y}$};
            \node (c) at (-1,0) {$\mathcal{C}$};
            \node (d) at (2,0) {$\mathbb P^1$.};

            \node at (0.5,2.3) {$p$};
            \node at (-0.7,1) {$q$};
            \node at (2.3,1) {$f$};
            \node at (0.5,0.3) {$\pi_2|_C$};
        
            \draw[-stealth] (a) --(b);
              \draw[-stealth] (a) --(c);
            \draw[-stealth] (c) --(d);
              \draw[-stealth] (b) --(d);
        \end{tikzpicture}
    \end{figure}
    Here $F:=\mathcal{F}_k$ is a curve, though possibly reducible. The finiteness in the statement
    is equivalent to the finiteness of $(\mathcal{F}\setminus q^{-1}(\mathcal{C}\cap \mathcal{D}))(\O_{k,S})$.

    Assume that (1) holds. Since $Y$ has genus at least $1$, then so does every irreducible component of $F$. 
    The required finiteness now follows from Siegel's Theorem  (Theorem \ref{thm:siegel}).

    Now assume (2). Since the branch loci are disjoint, $F$ is smooth and geometrically irreducible
    by \cite[Lem.~2.8]{Streeter2021HilbertVarieties}.  Note that as the intersection product between $C$ and a fibre of $\pi_2$ is greater than $1$ by \eqref{eqn:not_a_section}, we see that $\deg(\pi_2 |_C) \geq 2$. 
    As $f$ has at least $2$ branch points over $\bar{k}$, we deduce that $q$ has at least $4$ branch points over $\bar{k}$. This follows from the compatibility of ramification indices with composition of morphism and the fact that $f$ and $q$ have disjoint branch loci. Since $C \cap D$ is reduced of degree $2$, we find that $q$ has at least one branch point outside of the support of $D$. The required finiteness then follows from Lemma \ref{lemma:5.3}.
  
    The final part follows from Lemma \ref{lem:disjoint_branch_locus}, and a similar argument which shows that $C \cap D$ is non-reduced for only finitely many fibres, since $D$ is reduced.
\end{proof}

Condition \eqref{hyp:2c} is also rather mild, as the following shows.

\begin{lemma} \label{lem:meet_D_unramified}
    Let $\Omega \subset D$ be a finite collection of closed points. Then for all but finitely many $P \in \P^1(k)$ we have $C_{1,P} \cap \Omega = \emptyset$. In particular, for all but finitely many $P \in \P^1(k)$ the map $\pi_2|_D : D \to \P^1$ is unramified along $C_{1,P} \cap D$.
\end{lemma}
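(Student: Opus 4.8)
The plan is to prove Lemma~\ref{lem:meet_D_unramified}, which has two parts. The first statement is a dimension-count/finiteness assertion: for a fixed finite set $\Omega \subset D$ of closed points, all but finitely many fibres $C_{1,P}$ of $\pi_1$ miss $\Omega$. The second statement applies this to the specific choice of $\Omega$ equal to the ramification locus of $\pi_2|_D : D \to \P^1$.

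First I would prove the first statement. Fix a closed point $Q \in \Omega$, viewed as a point of $X$ lying on $D$. Since $\pi_1 : X \to \P^1$ is a morphism, the point $Q$ lies on exactly one fibre of $\pi_1$, namely $C_{1,P}$ where $P = \pi_1(Q)$ (here I should interpret this over $\bar k$ and track Galois orbits, but since $\Omega$ is a finite set of closed points this only contributes finitely many $P$). Hence $Q \in C_{1,P}$ forces $P$ to be one of the finitely many images $\pi_1(Q)$ as $Q$ ranges over $\Omega$. Therefore only finitely many $P \in \P^1(k)$ have $C_{1,P} \cap \Omega \neq \emptyset$, which is exactly the claim. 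The key point is simply that a point sits on a unique fibre of a morphism to $\P^1$, so this part is essentially immediate.

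Next I would deduce the ``in particular'' statement. The map $\pi_2|_D : D \to \P^1$ is a finite morphism between smooth projective curves (it is finite because $D$ is not contained in a fibre of $\pi_2$, which holds since $\pi_2^{-1}(P) \cdot D = 2 > 0$ by hypothesis \eqref{hyp:3}). As $k$ has characteristic $0$, such a map has only finitely many ramification points; let $\Omega \subset D$ denote this finite set of ramification points. The map $\pi_2|_D$ being unramified along $C_{1,P} \cap D$ means exactly that no point of $C_{1,P} \cap D$ is a ramification point of $\pi_2|_D$, i.e.\ that $C_{1,P} \cap \Omega = \emptyset$. Applying the first part of the lemma to this particular $\Omega$ gives the result.

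I do not expect any serious obstacle here, as both statements reduce to the fact that a curve on $X$ meeting a fixed finite set, or a fixed fibre, imposes only finitely many constraints. The only mild subtlety worth stating carefully is the reduction over $\bar k$: a closed point of $\Omega$ may be non-rational, so one should pass to $\bar k$, note that each geometric point of $\Omega$ lies on a unique geometric fibre, and observe that this still cuts out only finitely many $P \in \P^1(k)$. The finiteness of the ramification locus of a finite separable map of smooth curves (valid in characteristic $0$) is the only external fact invoked, and it is entirely standard.
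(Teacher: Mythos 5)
Your proof is correct and follows essentially the same route as the paper: the paper's one-line proof invokes base-point freeness of the linear system of conics (equivalently, your observation that each point of $\Omega$ lies on a unique fibre of the morphism $\pi_1$) together with finiteness of the ramification locus of $\pi_2|_D$. You have merely spelled out the same two facts in more detail, including the harmless bookkeeping over $\bar k$ and the finiteness of $\pi_2|_D$ via hypothesis \eqref{hyp:3}.
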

\begin{proof}
    Immediate from the fact that the linear system given by the conics is base-point free and that $\pi_2|_D : D \to \P^1$ ramifies in only finitely many points.   
\end{proof}

We next show a strong version of Zariski density of $S$-integral points. By \eqref{hyp:2a} there is a conic on the surface with infinitely many integral points. The main difficulty is finding conics in the second fibration passing through these integral points which also have infinitely many integral points. This is obtained via the criterion in Lemma \ref{lem:pointsatinftyconic} and a topological argument using Proposition~\ref{prop:v_adic_approximation}, which is a substitute for strong approximation in our setting, as well as an additional argument over $\Q$ or imaginary quadratic fields using Lemma \ref{lem:conic_finiteness} to avoid very special cases. 
Let
\[ A_i = \{ P \in \mathbb P^1(k) \mid C_{i,P}^\circ(\mathcal O_S) \text{ is infinite} \}, \quad \text{for } i = 1,2.\]
Our strong version of Zariski density is as follows.

\begin{lemma} \label{lem:Zariski_dense}
    The sets $A_1$ and $A_2$ are infinite. Moreover, for all but finitely many $P \in A_1$, there are infinitely many  $x \in C_{1,P}^\circ(\O_{k,S})$ such that $C^\circ_{2,\pi_2(x)}(\O_{k,S})$ is infinite.
\end{lemma}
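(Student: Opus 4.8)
The plan is to isolate a single ``propagation'' statement and deduce all three assertions from it. Concretely, I would first prove:
$(\star)$ if $P \in A_1$ is such that $C:=C_{1,P}$ satisfies the analogues of \eqref{hyp:2b} and \eqref{hyp:2c}, then there are infinitely many $x \in C^\circ(\O_{k,S})$ with $\pi_2(x) \in A_2$, and the values $\pi_2(x)$ are infinite in number. Granting $(\star)$, the three claims follow by bookkeeping. Applying $(\star)$ to $P=P_0$ (where \eqref{hyp:2b} and \eqref{hyp:2c} are the given hypotheses) shows $A_2$ is infinite, since the finite map $\pi_2|_C$ is non-constant by \eqref{eqn:not_a_section} and so has finite fibres, forcing the values $\pi_2(x)\in A_2$ to be infinite. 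The same statement with $\pi_1,\pi_2$ interchanged, applied to any $Q\in A_2$ avoiding the finitely many exceptions of Lemmas \ref{lem:conic_finiteness} and \ref{lem:meet_D_unramified}, produces infinitely many values $\pi_1(y)\in A_1$, so $A_1$ is infinite. Finally the ``moreover'' is exactly $(\star)$ applied to every $P\in A_1$ outside the finite bad set furnished by Lemma \ref{lem:conic_finiteness} (for \eqref{hyp:2b}, taking $f=\pi_2|_D:D\to\P^1$ of degree $2$) and Lemma \ref{lem:meet_D_unramified} (for \eqref{hyp:2c}).

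It therefore remains to prove $(\star)$. Since $C^\circ(\O_{k,S})$ is infinite, Lemma \ref{lem:pointsatinftyconic} gives a place $v\in S$ with $(C\cap D)(k_v)\neq\emptyset$. By \eqref{hyp:2c} the map $\pi_2|_D$ is unramified at every $k_v$-point $R$ of $C\cap D$, hence, being a dominant map of smooth curves, étale there. The $v$-adic implicit function theorem then yields, for each such $R$, a $v$-adic neighbourhood $N_R\subset D(k_v)$ mapped homeomorphically by $\pi_2|_D$ onto a neighbourhood of $\pi_2(R)$. I would then choose a neighbourhood $W$ of the finite set $(C\cap D)(k_v)$ in $C(k_v)$ so small that for $x\in W$ near $R$ the image $\pi_2(x)$ lands in $\pi_2(N_R)$. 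By Proposition \ref{prop:v_adic_approximation} the set $C^\circ(\O_{k,S})\cap W$ is infinite, and for each such $x$, writing $Q=\pi_2(x)$, the homeomorphism supplies $R_Q\in D(k_v)$ with $\pi_2(R_Q)=Q$, i.e. $R_Q\in (C_{2,Q}\cap D)(k_v)$. Thus $C_{2,Q}$ has a $k_v$-point at infinity and also contains the integral point $x$.

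To conclude that $C_{2,Q}^\circ(\O_{k,S})$ is infinite I would apply Lemma \ref{lem:pointsatinftyconic} to $C_{2,Q}$. If $|S|\geq 2$ this is immediate from the $k_v$-point at infinity just constructed. The only remaining case is $|S|=1$, which forces $k=\Q$ or an imaginary quadratic field, the single place being archimedean; here a split conic carries only finitely many integral points, so I must discard the fibres whose two points at infinity are $k$-rational, that is those $Q\in\pi_2(D(k))$. This is precisely where \eqref{hyp:2b} enters: it makes $\pi_2(C^\circ(\O_{k,S}))\cap\pi_2(D(k))$ finite, and as $\pi_2|_C$ has finite fibres only finitely many of our points $x$ are thereby removed. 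Every remaining $x$ satisfies $\#(C_{2,Q}\cap D)(k)=0\neq 2$ together with a $k_v$-point at infinity, so Lemma \ref{lem:pointsatinftyconic} gives infinitely many integral points, i.e. $\pi_2(x)\in A_2$. These $Q=\pi_2(x)$ take infinitely many values by finiteness of the fibres of $\pi_2|_C$, which establishes $(\star)$.

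The main obstacle I anticipate is the two-layered local analysis of the second paragraph, namely transporting solubility at infinity from $C$ to the transverse fibre $C_{2,Q}$ through the $v$-adic implicit function theorem, which requires the unramifiedness \eqref{hyp:2c} and some care in the non-archimedean case, together with the clean separation of the split/non-split dichotomy in the degenerate case $|S|=1$, where \eqref{hyp:2b} is indispensable. By comparison the global bookkeeping, the finite-fibre arguments producing infinitely many values, and the symmetric deduction of the infinitude of $A_1$, should be routine.
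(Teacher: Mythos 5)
Your proposal is correct and takes essentially the same approach as the paper: the paper's proof establishes exactly your propagation step by finding $v \in S$ via Lemma \ref{lem:pointsatinftyconic}, building an open set around $(C \cap D)(k_v)$ whose integral points (infinite by Proposition \ref{prop:v_adic_approximation}) lie on transverse conics with a $k_v$-point at infinity (the paper uses a cylinder over the unramified locus of $\pi_2|_D$ where you use implicit-function-theorem sections, but this is the same mechanism resting on hypothesis \eqref{hyp:2c}), handling the $|S|=1$ case via \eqref{hyp:2b}, and then swapping the fibrations with the finitely-many-exceptions bookkeeping from Lemmas \ref{lem:conic_finiteness} and \ref{lem:meet_D_unramified}. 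Your explicit isolation of $(\star)$ and of the finite-fibre argument for infinitude of the values $\pi_2(x)$ is only an organizational refinement of what the paper does implicitly.
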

\begin{proof}
Let $C$ be a smooth conic as in hypothesis \eqref{hyp:2} of Theorem \ref{thm:generalisation}. By Lemma~\ref{lem:pointsatinftyconic}, there is a place $v \in S$ such that $(C \cap D)(k_v) \neq \emptyset$. Let $W_D \subseteq D(k_v)$ be the complement of the ramification points of $\pi_2|_D : D \to \P^1$; this is open in $D(k_v)$ as $D$ is smooth. Hence $\pi_2(W_D) \subset \P^1(k_v)$ is open as an \'etale morphism is open for the $v$-adic topology. We now consider the cylinder $W := \pi_2^{-1}(\pi_2(W_D)) \subset X(k_v)$, which is an open subset of $X(k_v)$. For any $P \in \pi_2(W)$, since $\pi_2(W) = \pi_2(W_D)$ we have $(C_{2,P} \cap D)(k_v) \neq \emptyset$.

Since $(C \cap D)(k_v) \subseteq W_D$ by hypothesis \eqref{hyp_intro:2c}, from Proposition \ref{prop:v_adic_approximation} we see that $W$ contains infinitely many $\O_{k,S}$-points of $C^\circ$. So let $x \in C^\circ(\O_{k,S}) \cap W$ and let $C_{2,\pi_2(x)}:= \pi_2^{-1}(\pi_2(x))$ be the conic from the second fibration passing through $x$. Then $\pi_2(x) \in \pi_2(W)$ and hence $(C_{2,\pi_2(x)} \cap D)(k_v) \neq \emptyset$. As $C_{2,\pi_2(x)}^\circ(\O_{k,S}) \neq \emptyset$ by construction, we deduce from Lemma~\ref{lem:pointsatinftyconic} that $C_{2,\pi_2(x)}^\circ(\O_{k,S})$ is infinite, unless we are in the special case that $\#(C_{2,\pi_2(x)} \cap D)(k) = 2$ and $|S| = 1$. In this special case we have assumed in hypothesis \eqref{hyp:2b} that $\pi_2(C^\circ(\O_{k,S})) \cap \pi_2(D(k))$ is finite, which implies that the boundary has two rational points for only finitely many choices of $x$. Hence there are infinitely many $x \in C_1^\circ(\O_{k,S})$ such that  $C_{2,\pi_2(x)}^\circ(\O_{k,S})$ is infinite. 
We complete the proof by repeating this process with $\pi_1$ and $\pi_2$ swapped, noting that only finitely many conics for $\pi_2$ fail our conditions by Lemmas \ref{lem:conic_finiteness} and  \ref{lem:meet_D_unramified}.
\end{proof}

The next step is showing that the sets $A_i$ are not thin.

\begin{lemma}\label{lemma:finite_set}
    Let $T \subset \P^1(k)$ be thin. For all but finitely many $P \in A_1$, the set $\pi_2(C^\circ_{1,P}(\mathcal O_{k,S})) \cap T$ is finite.
\end{lemma}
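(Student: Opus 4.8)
The plan is to reduce an arbitrary thin $T$ to finitely many genuine finite covers of $\P^1$ and then to apply Lemma \ref{lem:conic_finiteness}, whose ``all but finitely many $P$'' clause is exactly the uniform-in-$P$ statement we need. First I would invoke the structure theory of thin sets over a number field: any thin subset $T \subseteq \P^1(k)$ is contained in a finite union $\Omega \cup \bigcup_{i=1}^{n} g_i(Y_i(k))$, where $\Omega \subset \P^1(k)$ is finite and each $g_i : Y_i \to \P^1$ is a finite morphism of degree $\geq 2$ from a smooth projective curve $Y_i$. Concretely, a thin map $Y \to \P^1$ that is not dominant has image contained in a proper closed subset of $\P^1$, hence in a finite set of closed points, which we absorb into $\Omega$; for a dominant thin map, the reduction to a finite cover of degree $\geq 2$ is precisely the standard equivalence (Serre's theory of thin sets) between the ``no rational section'' definition used in this paper and the description of thin sets via finite covers. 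Normalising the resulting curves changes each image $g_i(Y_i(k))$ only by the images of finitely many singular points, which again go into $\Omega$.

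With this reduction in hand, for each $P$ the intersection splits as
\[
\pi_2\big(C^\circ_{1,P}(\O_{k,S})\big) \cap T \ \subseteq\ \big(\pi_2(C^\circ_{1,P}(\O_{k,S})) \cap \Omega\big) \ \cup\ \bigcup_{i=1}^{n} \big(\pi_2(C^\circ_{1,P}(\O_{k,S})) \cap g_i(Y_i(k))\big).
\]
The first term is contained in the finite set $\Omega$, so it is finite for \emph{every} $P$. For each $i$, since $g_i$ is a finite morphism of degree $\geq 2$ from a smooth projective curve, Lemma \ref{lem:conic_finiteness} applied with $f = g_i$ shows that $\pi_2(C^\circ_{1,P}(\O_{k,S})) \cap g_i(Y_i(k))$ is finite for all $P$ outside some finite set $E_i \subset \P^1(k)$. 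Setting $E = \bigcup_{i=1}^n E_i$, which is finite, I conclude that for every $P \in \P^1(k) \setminus E$ the displayed set is a finite union of finite sets, hence finite. Since $A_1 \subseteq \P^1(k)$, this in particular gives the claim for all but finitely many $P \in A_1$.

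I expect the only delicate point to be the first step: justifying that an arbitrary thin set — defined in this paper through morphisms admitting no rational section, which a priori may be dominant with positive-dimensional fibres rather than a finite cover — is genuinely controlled by finitely many finite covers of $\P^1$ of degree $\geq 2$. This is exactly the content of the equivalence between the ``no rational section'' formulation and the finite-cover description of thin sets, and it is where one must take care (the dominant, non-generically-finite case relies on the Hilbert-irreducibility/specialisation input underlying that equivalence). Once $T$ is presented as finitely many honest finite covers of $\P^1$ together with a finite set, the remainder is a clean bookkeeping application of Lemma \ref{lem:conic_finiteness}.
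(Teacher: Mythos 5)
Your proof is, in substance, the paper's: the paper's own argument is a two-liner which presents the thin set $T$ as contained in a finite union $\bigcup_{i=1}^r \phi_i(Y_i(k))$ with each $Y_i$ a smooth geometrically irreducible curve and $\deg(\phi_i) > 1$, and then applies Lemma \ref{lem:conic_finiteness} to each $\phi_i$, exactly as you do. Your additional bookkeeping (the finite set $\Omega$, normalisation, taking the union $E = \bigcup_i E_i$ of the exceptional sets) is correct and merely makes the paper's implicit steps explicit. One small point you leave tacit: geometric irreducibility of the $Y_i$, which the paper requires and which case (2) of Lemma \ref{lem:conic_finiteness} relies on, is automatic for a smooth projective irreducible curve over $k$ once it has a $k$-point, and curves without $k$-points contribute nothing to the images.

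The caveat concerns precisely the step you flag as delicate, and there your proposed justification overstates what is true. For a dominant morphism $f: Y \to \P^1$ with $\dim Y > 1$ and no rational section, it is in general \emph{false} that $f(Y(k))$ lies in a finite set together with finitely many images $g_i(Y_i(k))$ of finite covers of degree $\geq 2$: take $k = \Q$ and $f: \mathbb{A}^2 \to \mathbb{A}^1$, $(x,y) \mapsto x^2 + y^2$. There is no rational section (an identity $p^2 + q^2 = t\,r^2$ in $\Q[t]$ with $r \neq 0$ is impossible, since the left side has even degree — its leading coefficients cannot cancel — while the right side has odd degree), so this is a thin map in the paper's stated sense; yet its image contains every integer that is a sum of two squares, of which there are $\asymp B/\sqrt{\log B}$ up to $B$ by Landau--Ramanujan, whereas any finite union of images of degree $\geq 2$ covers meets $[0,B] \cap \Z$ in $O(B^{1/2}\log B)$ points by the large-sieve bound for Serre-thin sets. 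So no Hilbert-irreducibility argument can supply the equivalence you appeal to: the ``no rational section'' definition applied to arbitrary morphisms is strictly more permissive than Serre's classical notion. This, however, is a defect inherited from the paper's loosely stated definition rather than of your argument measured against the paper: the paper's proof of Lemma \ref{lemma:finite_set} asserts the identical decomposition without comment (and its proof of the main theorem likewise negates thinness by assuming finitely many finite covers plus a curve), so the operative notion throughout is Serre's, under which your reduction and the rest of your proof coincide with the paper's and are correct.
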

\begin{proof}
    As $T$ is thin, there is a finite collection $\phi_i: Y_i \to \mathbb P^1$, $i = 1, \ldots, r$, with $Y_i$ a smooth geometrically irreducible curve and $\deg(\phi_i)>1$, such that $T \subseteq \bigcup_{i=1}^r \phi_i(Y_i(k))$. The result then follows from applying Lemma \ref{lem:conic_finiteness} to each $\phi_i$.
\end{proof}

Naturally Lemma \ref{lemma:finite_set} holds if we swap the role of $\pi_1$ and $\pi_2$.

\begin{proposition}\label{prop:A_not_thin}
    The sets $A_1$ and $A_2$ are not thin.
\end{proposition}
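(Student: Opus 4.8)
The plan is to argue by contradiction, playing the two fibrations off against each other: thinness of one of the $A_i$ would force an upper bound (via Lemma~\ref{lemma:finite_set}) that directly contradicts the lower bound supplied by Lemma~\ref{lem:Zariski_dense}. Since the hypotheses are symmetric under swapping $\pi_1$ and $\pi_2$ (as recorded after Lemmas~\ref{lem:Zariski_dense} and \ref{lemma:finite_set}), it suffices to prove that $A_2$ is not thin; the statement for $A_1$ then follows by the identical argument with the roles of the two conic bundles interchanged. So I would suppose, for contradiction, that $A_2$ is thin.

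First I would extract the lower bound. By Lemma~\ref{lem:Zariski_dense} the set $A_1$ is infinite, and for all but finitely many $P \in A_1$ there are infinitely many $x \in C_{1,P}^\circ(\O_{k,S})$ for which $C_{2,\pi_2(x)}^\circ(\O_{k,S})$ is infinite, i.e.\ for which $\pi_2(x) \in A_2$. The one point needing a touch of care is the passage from infinitely many integral points $x$ to infinitely many image points $\pi_2(x)$: this holds because $\pi_2|_{C_{1,P}}$ is non-constant, its degree being at least $2$ by \eqref{eqn:not_a_section}, so its fibres are finite and cannot absorb infinitely many $x$ into finitely many values. Hence for all but finitely many $P \in A_1$ the set $\pi_2(C_{1,P}^\circ(\O_{k,S})) \cap A_2$ is infinite.

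Next I would invoke the competing upper bound coming from thinness: applying Lemma~\ref{lemma:finite_set} to the thin set $T = A_2$ shows that for all but finitely many $P \in A_1$ the set $\pi_2(C_{1,P}^\circ(\O_{k,S})) \cap A_2$ is \emph{finite}. Because $A_1$ is infinite, there is some $P \in A_1$ (indeed all but finitely many) at which both conclusions apply, forcing the same set to be simultaneously infinite and finite, which is absurd. Thus $A_2$ is not thin, and the symmetric argument yields that $A_1$ is not thin. I do not expect a serious remaining obstacle in this deduction itself, as both hard inputs are already in hand: the genuine difficulty lives in Lemma~\ref{lem:Zariski_dense} (producing infinitely many $\pi_2$-fibres with infinitely many integral points, via the strong-approximation substitute of Proposition~\ref{prop:v_adic_approximation}) and in Lemma~\ref{lemma:finite_set} (the Siegel-type finiteness of Lemma~\ref{lem:conic_finiteness}, which crucially uses $\deg(\pi_2|_{C}) \geq 2$, a consequence of the simple-connectedness hypothesis \eqref{hyp:1} through Lemma~\ref{lem:not_simply_connected} and \eqref{eqn:not_a_section}).
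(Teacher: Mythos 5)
Your proposal is correct and is essentially the paper's own proof: assume $A_2$ is thin, apply Lemma~\ref{lemma:finite_set} with $T=A_2$ to get finiteness of $\pi_2(C^\circ_{1,P}(\O_{k,S}))\cap A_2$ for all but finitely many $P\in A_1$, and contradict the second part of Lemma~\ref{lem:Zariski_dense}, with the symmetric argument handling $A_1$. The only difference is that you make explicit the (correct) small step that finitely many fibres of the non-constant map $\pi_2|_{C_{1,P}}$ cannot absorb infinitely many integral points, which the paper leaves implicit.
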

\begin{proof}
    We do it for $A_2$, the other case being analogous. Assume that $A_2$ is thin. Then by Lemma \ref{lemma:finite_set}, for all but finitely many $P \in A_1$, the set $\pi_2(C^\circ_{1,P}(\mathcal O_S)) \cap A_2$ is finite. 
    However this contradicts the second part of  Lemma \ref{lem:Zariski_dense},
    as required.
\end{proof}

\subsection{Proof of Theorem \ref{thm:generalisation}]}
Given the above results, the remaining steps are quite similar to the proof of \cite[Thm.~1.7]{Coccia2024ASurfaces}, so we shall be slightly brief. We will use the following definition by Coccia \cite[Def.~5.2]{Coccia2024ASurfaces}. Let $\pi= \pi_j$, with $j \in \{1,2\}$.
\begin{definition}
    We say that a cover $\phi: Y \to X$ is \emph{$\pi$--unramified} if:
    \begin{itemize}
        \item the fiber $Y(P):= \phi^{-1}(\pi^{-1}(P))$ is geometrically irreducible for all but finitely many $P \in \P^1(k)$.
        \item the ramification locus of $\phi$ is contained in the union of $D$ and finitely many fibres of $\pi$.
    \end{itemize}
    Otherwise, we say that the cover is \emph{$\pi$--ramified}.
\end{definition}

We argue by contradiction. Assume that there exist finitely many finite maps $\phi_i : Y_i \to X$ of degree $>1$, with $Y_i$ a geometrically irreducible normal projective variety, such that $X^\circ(\O_{k,S}) \setminus \cup_i (\phi_i(Y_i(k))$ is contained in a curve $Z$. By Zariski's purity theorem the branch locus $B_i$ of $\phi_i$ is a divisor.

No cover $\phi_i$ can be both $\pi_1$--unramified and $\pi_2$--unramified.
    Indeed, in this case we would have that $B_i$ is contained in $D \cup E$, where $E$ is the union of curves on which both $\pi_1$ and $\pi_2$ are constant. By hypothesis \eqref{hyp:1} of Theorem \ref{thm:generalisation}, $X\setminus (D \cup E)$ is simply connected, thus $\phi_i$ would be trivial.
    
    As in \cite[Thm.~1.7, Claim 1]{Coccia2024ASurfaces}, we have that there exists a thin set $T_1 \subseteq\mathbb P^1(k)$ such that if $P \notin T_1$ then only finitely many points of $C^\circ_{1,P}(\O_{k,S})$ lift to the covers $\phi_i$ that are $\pi_1$--ramified.
    If all covers were $\pi_1$--ramified then, by Proposition \ref{prop:A_not_thin}, for all $P \in A_1 \setminus T_1$ only finitely many points of $C^\circ_{1,P}(\O_{k,S})$ lift, i.e.~we have infinitely many points on $C^\circ_{1,P}(\O_{k,S})$ that are \emph{not} in the image of the $\phi_i$. Hence we have a contradiction.
    Therefore we assume that there exists at least one $\pi_1$--unramified cover. Again as in \cite[Thm.~1.7, Claim 2]{Coccia2024ASurfaces}, 
    we find a thin set $T_1 \subseteq\mathbb P^1( k)$ (obtained enlarging $T_1$) such that, for $Q \notin T_1$, \emph{all} points of $C^\circ_{1,Q}(\O_{k,S})$ are lifted to $\pi_1$--unramified covers.

    As in the first case ($\pi_1$--ramified covers), we find a thin set $T_2$ such that for each $P \in A_2 \setminus T_2$ only finitely many points of $C^\circ_{2,P}(\O_{k,S})$ lift to $\pi_1$-unramified covers (which are $\pi_2$-ramified).

    Since $T_1$ is thin, by Lemma \ref{lemma:finite_set} applied swapping the role of $\pi_1$ and $\pi_2$, for all but finitely many $P \in A_2$, we have that $T_1 \cap \pi_1(C^\circ_{2,P}(\O_{k,S}))$ is finite, i.e.~only finitely many $x \in C^\circ_{2,P}(\O_{k,S})$ are such that $\pi_1(x) \in T_1$.
    Thus we can choose $P$ in $A_2 \setminus T_2$ such that there are infinitely many $x \in C^\circ_{2,P}(\O_{k,S})$ with $\pi_1(x) \notin T_1$ and $C^\circ_{1,\pi_1(x)}(\O_{k,S})$ is infinite by Lemma~\ref{lem:Zariski_dense}. Since $P \notin T_2$, then only a finite number of points of $C^\circ_{2,P}(\O_{k,S})$ are lifted to rational points of $\pi_1$-unramified covers.
    On the other hand, $\pi_1(x) \notin T_1$, so \emph{all} points 
    of $C^\circ_{1,\pi(P)}(\O_{k,S})$ are lifted to $\pi_1$-unramified covers. This gives a contradiction and hence finishes the proof. \qed 

\begin{example} \label{ex:bad_conic_bundle}
	Not all conic bundles with Zariski dense set of integral points satisfy the conclusion of Lemma \ref{lem:Zariski_dense}. Consider the affine surface $U: xy = t$ over $\Z$. This is isomorphic to $\mathbb{A}^2_{\Z}$, hence $U(\Z)$ is not thin. It is equipped with various conic bundles, e.g.~the fibration $f: U \to \mathbb{A}^1, (x,y,t) \to t$. But each fibre of $f$ has only finitely many integral points. This shows that our assumptions are necessary for our method (here the boundary divisor is reducible).
\end{example}

\section{Del Pezzo surfaces} \label{sec:del_Pezzo}
In this section we make explicit the applications of Theorem \ref{thm:generalisation} to del Pezzo surfaces, including the proofs of Theorems \ref{thm:dP} and \ref{thm:cubic}.

\subsection{Double conic bundles}
\begin{lemma}\label{lem:Str2conic}
    Let $X$ be a del Pezzo surface of degree $d \in \{1,2,4\}$ with a conic bundle with fibre class $C \in \Pic X$. Then there exists a second conic bundle with fibre class $C' \in \Pic X$ such that $-(4/d)K_X = C + C'$.
\end{lemma}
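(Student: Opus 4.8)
The plan is to produce the dual fibre class by an explicit formula and then verify that it genuinely defines a conic bundle. Since $d \in \{1,2,4\}$, the quantity $4/d \in \{4,2,1\}$ is a positive integer, so I set
$$C' := -\tfrac{4}{d}K_X - C \in \Pic X.$$
As $C$ and $K_X$ are Galois-invariant, so is $C'$, and the identity $C + C' = -\tfrac{4}{d}K_X$ holds by construction. The first step is the routine numerical check, using $C^2 = 0$, $-K_X \cdot C = 2$ (Lemma \ref{lem:conic_bundle_facts}) and $(-K_X)^2 = d$: this yields $-K_X \cdot C' = \tfrac{4}{d}d - 2 = 2$, $(C')^2 = \tfrac{16}{d^2}d - 2\cdot\tfrac{4}{d}\cdot 2 = 0$, and $C \cdot C' = 8/d$. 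In particular the arithmetic genus of $C'$ is $1 + \tfrac12((C')^2 + K_X \cdot C') = 0$, exactly what a conic fibre must satisfy.

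It remains to show $C'$ is the fibre class of an honest conic bundle. I would argue that $C'$ is nef, effective and base-point free; a non-zero nef class with $(C')^2 = 0$ and $-K_X \cdot C' = 2$ on a del Pezzo surface is base-point free and its associated morphism to $\P^1$ has genus-$0$ fibres of anticanonical degree $2$, i.e.\ is a conic bundle (alternatively one exhibits the reducible fibres as a Galois-stable collection of pairs of lines and applies Lemma \ref{lem:conic_bundle_exists}). Effectivity follows from Riemann--Roch: $\chi(C') = 1 + \tfrac12((C')^2 - K_X\cdot C') = 2$, while $\H^2(X, C') = \H^0(X, K_X - C')^\vee = 0$ since $(-K_X)\cdot(K_X - C') = -d - 2 < 0$ and $-K_X$ is ample, so $h^0(C') \geq 2$. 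For nefness I would use that the cone of curves of a del Pezzo surface is generated by the classes of lines, reducing the check to $C' \cdot L \geq 0$ for every line $L$, equivalently $C \cdot L \leq 4/d$.

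For $d = 1, 2$ there is a clean shortcut making all of this automatic. The Bertini ($d=1$) and Geiser ($d=2$) involutions $\iota$ are automorphisms of $X$ defined over $k$, fixing $K_X$ and acting as $-1$ on $K_X^\perp$; the resulting formula $\iota^*D = \tfrac{2(D\cdot K_X)}{d}K_X - D$ gives $\iota^* C = C'$ directly. Hence $\pi_1 \circ \iota \colon X \to \P^1$ is a conic bundle with fibre class $C'$, and descent to $k$ is immediate as $\iota$ and $\pi_1$ are defined over $k$. For $d = 4$ no such involution exists (the isometry acting as $-1$ on $K_X^\perp$, a copy of the root lattice $D_5$, does not lie in the Weyl group), so this case is treated directly: over $\bar k$ every conic class on a degree-$4$ del Pezzo $\mathrm{Bl}_5\P^2$ has the form $H - E_i$ or $2H - \sum_{j\neq i}E_j$, the map $C \mapsto -K_X - C$ interchanges these two families, and both visibly define conic bundles; the Galois-invariant class $C'$ then descends using a rational point (e.g.\ the singular point of a fibre of $\pi_1$ with conjugate components).

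The main obstacle I anticipate is precisely the degree-$4$ case, where the slick involution argument is unavailable: one must verify the nefness bound $C \cdot L \leq 4/d = 1$ for all lines $L$ (equivalently, base-point freeness of $C'$) and then carry out the Galois descent to ensure the target is $\P^1_k$ rather than a nontrivial conic without a rational point. The numerical verification and the $d=1,2$ cases are straightforward; the real content lies in confirming that the numerically-correct class $C'$ is a genuine, $k$-rational conic bundle when there is no automorphism available to transport $\pi_1$.
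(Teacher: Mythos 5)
Your proposal is correct in substance, but note that the paper itself gives no argument for this lemma: its ``proof'' is a citation to \cite[Lem.~3.5]{Streeter2021HilbertVarieties}. So what you have written is a self-contained replacement for an outsourced proof. Your skeleton --- define $C' := -\tfrac{4}{d}K_X - C$, verify the numerics ($-K_X\cdot C' = 2$, $(C')^2 = 0$, $C\cdot C' = 8/d$), then upgrade $C'$ to an honest conic bundle class via the Bertini/Geiser involutions for $d=1,2$ and via the explicit classification of conic classes on $\mathrm{Bl}_5\P^2$ for $d=4$ --- is exactly the kind of argument the citation hides, and the involution shortcut is consistent with the paper's own remark that for the surfaces of Theorem \ref{thm:Fermat} the dual bundle is obtained from the Geiser involution $w \mapsto -w$. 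The general nefness route you sketch first (reducing to $C\cdot L \leq 4/d$ for all lines $L$) is never actually carried out, but this is harmless: your three-case analysis covers every degree without it.

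The one genuinely flawed step is the descent mechanism you propose for $d=4$. The rational point you invoke need not exist: the singular fibres of $\pi_1$ lie over a degree-$(8-d)$ subscheme of $\P^1$ which can be an irreducible closed point, so no singular fibre need lie over a $k$-point of $\P^1$, and quartic del Pezzo conic bundles with $X(k) = \emptyset$ do exist. Fortunately, no rational point is needed at all. Since the lemma hypothesises an actual conic bundle over $k$, the class $C$ lies in $\Pic X$ (not merely in the Galois-invariants of $\Pic X_{\bar{k}}$), hence so does $C' = -\tfrac{4}{d}K_X - C$. Then $\H^0(X,\O_X(C'))$ is a $k$-vector space of dimension $2$ (by your Riemann--Roch computation together with Kodaira vanishing, since $C'-K_X$ is ample once $C'$ is nef; or by flat base change from $\bar{k}$), and the complete linear system --- base-point free by the geometric check over $\bar{k}$ --- defines a morphism to $\P\bigl(\H^0(X,\O_X(C'))^\vee\bigr) \cong \P^1_k$, so the target is automatically $\P^1$ over $k$. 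This is precisely the point where your situation is easier than that of Lemma \ref{lem:conic_bundle_exists} in the paper, where the fibre class is only Galois-invariant rather than in $\Pic X$, and a rational point genuinely is required. With this repair your proof is complete.
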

\begin{proof}
	See \cite[Lem.~3.5]{Streeter2021HilbertVarieties} and its proof.
\end{proof}
We call the second conic bundle from Lemma \ref{lem:Str2conic} \emph{the dual} of the original conic bundle.

\begin{corollary}\label{cor:dP}
    Let $k$ be a number field and $X$ be a smooth del Pezzo surface over $k$ of degree $d \leq 7$.
	Let $D \subset X$ be a smooth  anticanonical divisor.  Let $S$ be a finite set of places containing the archimedean places and $(\mathcal{X}, \mathcal{D})$ a projective model for $(X,D)$ over $\O_{k,S}$.
    Assume that either
    \begin{enumerate}
        \item[(1a)] $X$ admits two distinct conic bundle structures $\pi_1,\pi_2$ such that $X \setminus (D\cup E)$ is simply connected, 
        where $E$ denotes the union of those curves that are constant with respect to both conic bundles, or 
    	\item[(1b)] $d \in \{1,2,4\}$ and $X$ admits a conic bundle structure $\pi_1$ with dual $\pi_2$,
	\end{enumerate}
     and that
     \begin{enumerate}
    \item[(2)] $\mathcal{X}\setminus \mathcal{D}$ admits an $\O_{k,S}$-integral point which lies on a smooth fibre $C$ of $\pi_1$ such that $C^\circ(\O_{k,S})$ is infinite and $\pi_2|_D: D \to \P^1$ is unramified along $C \cap D$.
    \end{enumerate}
    Then $(\mathcal{X} \setminus \mathcal{D})(\O_{k,S})$ is not thin.
\end{corollary}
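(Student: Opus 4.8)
The plan is to deduce Corollary~\ref{cor:dP} directly from Theorem~\ref{thm:generalisation} by verifying its hypotheses in both cases (1a) and (1b). First I would record the two structural facts that are immediate. The conic bundles $\pi_1,\pi_2$ are distinct: in case (1a) this is assumed, while in case (1b) the fibre classes $C,C'$ satisfy $C+C'=-(4/d)K_X$ by Lemma~\ref{lem:Str2conic}, so $C\cdot C'=(4/d)(-K_X\cdot C)=8/d\geq 2$, whence $C\neq C'$ since $C^2=0$. Next, the intersection condition (Theorem~\ref{thm:generalisation}(2)) holds because every fibre of $\pi_i$ has the fixed class $C_i$ and $D\sim -K_X$, so $\pi_i^{-1}(P)\cdot D=-K_X\cdot C_i=2$ for all $P$ by Lemma~\ref{lem:conic_bundle_facts}.

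Then I would treat the conditions on the base conic (Theorem~\ref{thm:generalisation}(3)). Conditions (3a) and (3c) are exactly the hypotheses on $C$ assumed in part (2) of the corollary. For condition (3b), the key observation is that a smooth anticanonical divisor has genus $1$: by adjunction $2g(D)-2=(K_X+D)\cdot D=0$. The map $\pi_2|_D\colon D\to\P^1$ is finite of degree $\pi_2^{-1}(P)\cdot D=2>1$, so applying Lemma~\ref{lem:conic_finiteness}(1) with $Y=D$ and $f=\pi_2|_D$ shows that $\pi_2(C^\circ(\O_{k,S}))\cap\pi_2(D(k))$ is finite (unconditionally, not merely in the arithmetically special case). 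This disposes of (3b), and it is precisely the genus-$1$ shortcut that lets us sidestep the finer case analysis present in the general statement.

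The substance of the proof is condition (1), simple connectedness of $X\setminus(D\cup E)$. In case (1a) this is assumed. In case (1b) I would first show $E=\emptyset$: any irreducible curve $\Gamma$ constant for both fibrations satisfies $\Gamma\cdot C=\Gamma\cdot C'=0$, hence $\Gamma\cdot(C+C')=(4/d)(-K_X\cdot\Gamma)=0$, contradicting the ampleness of $-K_X$. It then remains to prove that $X\setminus D$ is itself simply connected. Since the \'etale fundamental group of a variety over $\bar k$ in characteristic $0$ is the profinite completion of the topological fundamental group of its base change to $\mathbb{C}$, it suffices to show $\pi_1^{\mathrm{top}}(X(\mathbb{C})\setminus D(\mathbb{C}))=1$. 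Here $X(\mathbb{C})$ is a rational, hence simply connected, surface and $D$ is smooth and connected, so by the standard fact that a loop bounding a disc in $X$ is a product of conjugate meridians, the kernel of the surjection $\pi_1(X\setminus D)\to\pi_1(X)=1$ is normally generated by a meridian $\mu$ of $D$; thus $\pi_1(X\setminus D)$ is normally generated by $\mu$. As $d\leq 7$ there is a line $L\cong\P^1$ on $X_{\bar k}$ with $L\cdot D=-K_X\cdot L=1$, so $L$ meets $D$ transversally in a single point $p$ and $L\setminus\{p\}\cong\mathbb{C}$ is simply connected; the meridian $\mu$, realised as a small loop around $p$ inside $L$, is therefore null-homotopic in $X\setminus D$. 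Since $D$ is connected all meridians are conjugate, so $\mu=1$ and $X\setminus D$ is simply connected.

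With all hypotheses of Theorem~\ref{thm:generalisation} verified, its conclusion gives that $(\mathcal X\setminus\mathcal D)(\O_{k,S})$ is not thin. I expect the main obstacle to be the simple connectedness in case (1b): the ampleness argument for $E=\emptyset$ is short, but the meridian computation requires care in passing between the \'etale fundamental group over $\bar k$ and the topological one over $\mathbb{C}$, and in checking that the line $L$ genuinely realises and kills a meridian of $D$. Everything else reduces to bookkeeping with intersection numbers and the genus-$1$ observation above.
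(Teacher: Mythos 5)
Your proof is correct, and its skeleton matches the paper's: both deduce the corollary from Theorem~\ref{thm:generalisation} by checking that (1b) implies (1a) and then verifying hypotheses \eqref{hyp_intro:1}--\eqref{hyp_intro:2}, with condition \eqref{hyp_intro:2b} disposed of via the genus-$1$ observation and Lemma~\ref{lem:conic_finiteness}(1), and condition \eqref{hyp_intro:3} via Lemma~\ref{lem:conic_bundle_facts}. The differences are in the two sub-steps of (1b)$\Rightarrow$(1a). First, to show $E=\emptyset$, the paper (Lemma~\ref{lem:noconstant}) rules out singular fibres of the two bundles sharing an irreducible component, by writing $-(4/d)K_X = 2L + L_1 + L_2$ and deriving a contradiction from intersection numbers with the shared line $L$; your argument is more direct and arguably cleaner: any curve $\Gamma$ constant for both fibrations has $\Gamma\cdot(C+C')=(4/d)(-K_X\cdot\Gamma)=0$, contradicting ampleness of $-K_X$, and this covers all constant curves at once rather than reducing to components of singular fibres. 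Second, for simple connectedness of $X\setminus D$, the paper simply cites \cite[Lem.~3.3.6]{Har17}, whereas you give a self-contained topological proof: comparison of \'etale and topological fundamental groups in characteristic $0$, normal generation of $\pi_1(X(\mathbb{C})\setminus D(\mathbb{C}))$ by a single meridian (using that $X$ is rational and $D$ is connected), and killing that meridian on a line $L$ with $L\cdot D = -K_X\cdot L=1$, whose existence is exactly where $d\leq 7$ enters. This argument is sound (the small loop around $p=L\cap D$ is a genuine meridian by transversality, and it is null-homotopic in $L\setminus\{p\}\cong\mathbb{C}$); what the citation buys the paper is brevity, while your version makes the role of the hypothesis $d\leq 7$ transparent and keeps the proof independent of the reference. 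One small point worth making explicit in your write-up: $D$ is connected (hence irreducible, being smooth), e.g.\ from $h^0(\O_D)=1$ via the sequence $0\to\O_X(K_X)\to\O_X\to\O_D\to 0$ and rationality of $X$; both your meridian conjugacy claim and the irreducibility hypothesis on $D$ in Theorem~\ref{thm:generalisation} rely on this.
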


\begin{proof}

We first explain how condition (1b) in Corollary \ref{cor:dP} actually implies (1a).

\begin{lemma}\label{lem:noconstant}
	 Let $X$ be a del Pezzo surface of degree $d \in \{1,2,4\}$. Then a conic bundle 
	 on $X$ and its dual have intersection number at least $2$, and no singular fibres 
	 share an irreducible component.
\end{lemma}
\begin{proof}
	We use the relation $-(4/d)K_X = C + C'$ from Lemma \ref{lem:Str2conic}.
	From this $C \cdot C' = 8/d \geq 2$ and $C \neq C'$. So assume that $C$ and $C'$ are singular conics in the family which share a common component. Then
	$$-\dfrac{4}{d}K_X = 2L + L_1 + L_2$$
	where $L,L_1,L_2$ are lines on $S$ such that
	$$
    L \cdot L_1 = 1, \quad L \cdot L_2 = 1.
	$$
	However using $-K_X \cdot L = 1$ we obtain
	$$L \cdot L_1 +  L \cdot L_2 = \dfrac{4}{d} + 2$$
	which is a contradiction.
\end{proof}

Then that (1b) implies (1a) follows from the fact that $X \setminus D$ is simply connected for any smooth effective anticanonical divisor $D$ (this is a special case of \cite[Lem.~3.3.6]{Har17}).

It remains to prove Corollary \ref{cor:dP} using Theorem \ref{thm:generalisation} assuming conditions (1a) and (2). Hypotheses \eqref{hyp_intro:1}, \eqref{hyp_intro:2a}, and \eqref{hyp_intro:2c}) are immediate, and (\ref{hyp_intro:2b}) follows from Lemma~\ref{lem:conic_finiteness} since $D$ has genus $1$ for $d \leq 7$. Finally hypothesis \eqref{hyp_intro:3} follows from Lemma~\ref{lem:conic_bundle_facts}.
\end{proof}

\subsection{Proof of Theorem \ref{thm:dP}} The given conic has infinitely many integral points by Lemma~\ref{lem:pointsatinftyconic}. The result now follows from Corollary \ref{cor:dP}. \qed

\subsection{Proof of Theorem \ref{thm:cubic}}
Any line on a smooth cubic surface gives rise to a conic bundle structure by taking the residual intersections with the family of planes containing the line. We first study the geometry of the double conic bundles which arise this way, in particular the difference between the lines being skew or not.

\begin{lemma} \label{lem:cubic_lines}
    Let $X$ be a smooth cubic surface over a field $k$ which contains two distinct lines $L_1,L_2 \subset X$. Consider the associated conic bundles $\pi_1$ and $\pi_2$ with divisor classes $C_1$ and $C_2$. Let $E$ be the divisor given by the common irreducible components of both fibrations and let $D$ be a smooth hyperplane section.
    \begin{enumerate}
        \item If $L_1 \cdot L_2 = 0$, then $C_1 \cdot C_2 = 1$. Moreover $E$ consists of $5$ pairwise skew lines and $X \setminus (D \cup E)$ is not simply connected.
        \item If $L_1 \cdot L_2 = 1$, then $C_1 \cdot C_2 = 2$. Moreover $E$ consists of a single line and $X \setminus (D \cup E)$ is simply connected.
    \end{enumerate}
\end{lemma}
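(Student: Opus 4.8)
The plan is to pin down the fibre classes explicitly and then reduce everything to a computation in the Picard lattice together with the classical combinatorics of the $27$ lines. The conic bundle $\pi_i$ attached to $L_i$ is cut out by the pencil of hyperplane sections through $L_i$, so its fibre class is the residual $C_i = -K_X - L_i$; one checks $-K_X\cdot C_i = 2$ and $C_i^2 = 0$, confirming these are conic bundle classes. Expanding the intersection product and using $K_X^2 = 3$ and $-K_X\cdot L_i = 1$ gives
\[
C_1\cdot C_2 = (-K_X-L_1)\cdot(-K_X-L_2) = K_X^2 + K_X\cdot L_1 + K_X\cdot L_2 + L_1\cdot L_2 = 1 + L_1\cdot L_2,
\]
which is $1$ when $L_1\cdot L_2 = 0$ and $2$ when $L_1\cdot L_2 = 1$, giving the first assertion in each case.

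Next I would identify $E$. An irreducible curve $M$ is constant for $\pi_i$ exactly when $M\cdot C_i = 0$. A smooth fibre of $\pi_1$ meets a fibre of $\pi_2$ in $C_1\cdot C_2 > 0$ points, so it is never contracted by $\pi_2$; hence any curve constant for both fibrations is a common irreducible component of singular fibres, and by Lemma \ref{lem:conic_bundle_facts} such components are lines. For a line $M$ one has $M\cdot C_i = -K_X\cdot M - M\cdot L_i = 1 - M\cdot L_i$, so $M$ is constant for $\pi_i$ precisely when $M\cdot L_i = 1$, i.e.\ when $M$ meets $L_i$. Therefore $E$ is exactly the set of lines meeting both $L_1$ and $L_2$.

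Then I would count these lines. Working over $\bar k$, where $X_{\bar k} \cong \mathrm{Bl}_6 \P^2$ with hyperplane class $H$ and exceptional classes $E_1,\dots,E_6$, I would use that the Weyl group $W(E_6)$ acts transitively on ordered pairs of skew lines and on ordered pairs of incident lines, reducing to the standard representatives $(L_1,L_2) = (E_1,E_2)$ and $(L_1,L_2) = (E_1, H - E_1 - E_2)$ respectively. A direct check in the lattice then lists the lines meeting both: in the skew case these are $\{H - E_1 - E_2, G_3, G_4, G_5, G_6\}$, where $G_i = 2H - \sum_{j\neq i}E_j$, five lines, and one verifies $G_i\cdot G_j = 0$ for $i\neq j$ and $(H - E_1 - E_2)\cdot G_i = 0$ for $i \geq 3$, so they are pairwise skew; in the incident case the only such line is $G_2 = -K_X - L_1 - L_2$, the residual line in the plane spanned by $L_1$ and $L_2$. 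This lattice bookkeeping, especially verifying the pairwise skewness in the first case and the uniqueness in the second, is the step I expect to be the main obstacle.

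Finally I would settle simple connectedness. In the skew case $C_1\cdot C_2 = 1$, the divisor $D$ is integral (a smooth plane cubic) with $C_1\cdot D = C_2\cdot D = 2$, so Lemma \ref{lem:not_simply_connected} applies verbatim and shows $X\setminus(D\cup E)$ is not simply connected. In the incident case $E$ is a single $(-1)$-line; blowing it down via $f\colon X\to X'$ yields a del Pezzo surface of degree $4$, and Lemma \ref{lem:blow_down_fundamental group} gives an isomorphism $\pi_1(X\setminus(D\cup E)) \cong \pi_1(X'\setminus f(D))$. Since $D\cdot E = -K_X\cdot E = 1$, the divisor $D$ meets $E$ transversally in a single point, so the image $f(D)$ is a smooth anticanonical divisor on $X'$, and the complement of a smooth anticanonical divisor on a del Pezzo surface is simply connected (the fact from \cite[Lem.~3.3.6]{Har17} already used in Corollary \ref{cor:dP}); hence $X\setminus(D\cup E)$ is simply connected.
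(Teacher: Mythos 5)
Your proof is correct, and its overall skeleton matches the paper's: the intersection numbers come from $C_i = -K_X - L_i$ (equivalently the paper's relations $(L_1+C_1)\cdot(L_2+C_2)=3$ and $L_i\cdot(-K_X)=1$), the skew case is finished by Lemma \ref{lem:not_simply_connected}, and the incident case by blowing down $E$ and invoking Lemma \ref{lem:blow_down_fundamental group} together with \cite[Lem.~3.3.6]{Har17}. Where you genuinely diverge is in identifying and counting $E$. The paper argues synthetically: in the skew case, the ten lines meeting $L_1$ are the components of the five singular fibres of $\pi_1$, and $L_2$, being a section of $\pi_1$, meets exactly one component of each such fibre, so exactly five lines meet both; pairwise skewness is automatic because these five lines lie in distinct fibres of $\pi_1$. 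In the incident case the paper notes that any line meeting both $L_1$ and $L_2$ lies in the plane they span, whose residual intersection with $X$ is a single line. You instead reduce to standard representatives using $W(E_6)$-transitivity on ordered pairs of skew (resp.\ incident) lines and enumerate solutions in the $\mathrm{Bl}_6\P^2$ lattice. Both routes are complete and standard; the paper's is shorter and avoids choosing a marking, while yours is more mechanical and, usefully, makes explicit two points the paper leaves implicit: the intersection-theoretic characterization of $E$ (an irreducible curve is constant for both fibrations iff it is a line with $M\cdot L_1 = M\cdot L_2 = 1$, via $M\cdot C_i = 1 - M\cdot L_i$), and the verification that $f(D)$ is a \emph{smooth} anticanonical divisor on the quartic del Pezzo surface (from $D\cdot E = 1$, hence transversality at a single point), which is needed before citing \cite[Lem.~3.3.6]{Har17}.
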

\begin{proof}
    The stated intersection numbers follow from the formulae 
    $$(L_1 + C_1)(L_2 + C_2) = 3, \quad L_1\cdot (L_2 + C_2) = L_2\cdot (L_1 + C_1) = 1.$$
    
    (1) It is well-known that two skew lines meet exactly 5 others; this can be seen from the fact that there are 10 lines which meet $L_1$, corresponding to the $5$ singular fibres of the conic bundle, and $L_2$ gives a section of the conic bundle, thus meets exactly 5 of these lines. From this description it is clear that they are pairwise skew. The complement is not simply connected by Lemma \ref{lem:not_simply_connected}

    (2) The divisor $E$ is given by the lines which lie in a common plane with both $L_1$ and $L_2$. As $L_1$ and $L_2$ are coplanar there is a unique such line. By Lemma \ref{lem:blow_down_fundamental group} it suffices to note that the complement of a smooth hyperplane section for a quartic del Pezzo surface is simply connected \cite[Lem.~3.3.6]{Har17}.
\end{proof}

From Lemma \ref{lem:cubic_lines}, it is now clear that in the setting of Theorem \ref{thm:cubic}, if the lines are coplanar then Corollary \ref{cor:dP} implies that the set of integral points is not thin.

It remains to show Zariski density if the lines are skew. The argument is a minor variant of the proof of Lemma \ref{lem:Zariski_dense}.  We are given a conic with infinitely many integral points. We  consider conics from the second fibration passing through these integral points. As $D$ has genus $1$, the finiteness property from Lemma \ref{lem:conic_finiteness} holds for all such conics (this does not use \eqref{eqn:not_a_section}). A topological argument as in the proof of Lemma~\ref{lem:Zariski_dense} shows that there are infinitely many such conics with infinitely many integral points. Repeating this process with the fibrations swapped gives the result. \qed

\smallskip
The above Zariski density argument applies to more general double conic bundle surfaces, but all new interesting applications appear to be to cubic surfaces (for del Pezzo surfaces of degree $1,2,4$ one can use Theorem~\ref{thm:dP}). 

\begin{question} \label{ques:cubic}
    Let $X$ be a smooth cubic surface over $\Q$ with two skew lines and smooth hyperplane section $D \subset X$, with natural  model given by taking the closure of $X$ in $\P^3_{\Z}$. If $X^\circ(\Z)$ is non-empty, then is it non-thin? Theorem \ref{thm:cubic} gives Zariski density, but our method cannot prove non-thinness, due to the corresponding complement not being simply connected (Lemma \ref{lem:cubic_lines}).
\end{question}

\begin{example} \label{ex:sums_three_cubes}
    We now use Theorem \ref{thm:cubic} to prove the integral Hilbert property for
    $$x^3 + y^3 + z^3 = 1,$$
    as originally proven in \cite[Thm.~1.15]{Coccia2019TheSurfaces}.
    Let
    $$X: \, x^3 +y^3 + z^3 + w^3 = 0,$$
    be the corresponding projective surface, with boundary divisor $D: \, w=0$.
    There are coplanar lines $L_1: x+y = z + w = 0$ and $L_2: x + z = y + w = 0.$
    We use the change variables $(x_0,x_1,x_2,x_3) = (x+y,x-y,z+w,z-w)$, to obtain
    $$X:\, x_0(x_0^2+3x_1^2)+x_2(x_2^2+3x_3^2) = 0.$$
    With this change of variables we have  $D: x_2-x_3=0$ and $L_1: x_0 = x_2=0 $.
    The conic bundle $\pi_1: X \rightarrow \P^1$ associated to $L_1$ is
    \[ \pi_1: (x_0,x_1,x_2,x_3) \mapsto \begin{cases}
        (x_0,x_2) & \text{ if }(x_0,x_2) \neq (0,0)\\
        (-x_2^2-3x_3^2,x_0^2+3x_1^2) & \text{ if }(x_0^2+3x_1^2,x_2^2+3x_3^2) \neq (0,0).
    \end{cases}
    \]
    The fibre over a point $(s,t) \in \P^1(\Q)$ is a conic $C_{s,t}$ lying in the plane $P_{s,t}$ defined by the equation $tx_0 - sx_2 = 0$.

    We next find an integer point on the surface which lies on a conic that meets the boundary divisor in a real quadratic point. Using the isomorphism $\P^2 \rightarrow P_{s,t}$, $(y_0,y_1,y_2) \mapsto (sy_0,y_1,ty_0,y_2)$, we can write
    \[ C_{s,t}: \, (s^3+t^3)y_0^2 + 3sy_1^2 + 3t y_2^2 =0 .\]
    This equation can be interpreted as lying in some $\P^2$-bundle over $\P^1$; see \cite[\S2]{Frei2018RationalSurfaces} for this perspective.
    The intersection of the conic and the boundary divisor $D: y_2-ty_0=0$ is $(s^3+4t^3)y_0^2 +3sy_1^2 = 0$. 
    For this to be real quadratic we require that $-s(s^3+4t^3)>0$ and that this is a non-square.
    The solution $(x,y,w,z)=(-9,8,1,6)$, which gives $(x_0,x_1,x_2,x_3)=(1,17,-7,5)$ satisfies this real condition, as required: it lies on the conic $C=\pi_1^{-1}(1,-7)$. 
    As for the unramified condition in Theorem \ref{thm:cubic}, the ramification points of $D$ with respect to $\pi_2$ exactly correspond to the planes through $L_2$ which meet $D$ in a non-reduced subscheme. 
    However the planes passing through the points $C_{1,-7}\cap D$ and $L_2$ are given by equations
    \[ (1-\sqrt{457})(x+z) + (13-\sqrt{457})(y+w)=0, \quad (1+\sqrt{457})(x+z) + (13+\sqrt{457})(y+w)=0. \]
    Then one checks that these planes meet $D$ in a reduced subscheme, as required. Thus Theorem \ref{thm:cubic} implies that the integral points are non-thin.
\end{example}

\section{Application to Fermat near misses} \label{sec:Fermat}

\subsection{Geometry of Fermat del Pezzo surfaces}
The aim of this section is to prove Theorem \ref{thm:Fermat}. We prepare with a study of the geometry of the surfaces
\begin{equation}\label{eq:Sn}
    X_n: \quad x^4+y^4-nz^4=w^2 \quad \subset \, \P(1,1,1,2)
\end{equation}
for $n \in \Q$. The more general class $Ax^4+By^4+Cz^4=w^2$ has been studied by Kresch and Tschinkel \cite{Kresch2004On2}, and we make use of some of their results. In particular one can find in \cite[\S 2]{Kresch2004On2} a complete description of the 56 lines on $X_n$. It follows from this that $X_n$ hax splitting field $\Q(\zeta, \sqrt[4]{n})$, where $\zeta = e^{i\pi/4}$ is a fixed choice of primitive $8$th root of unity. The Galois group of this field generically has degree $16$.

We first classify when a conic bundle exists.

\begin{proposition} \label{prop:conic_bundle_Fermat}
    Let $n \in \Q^\times$. The surface $X_n$ admits a conic bundle structure if and only if $n,2n$ or $-2n$ is a square, or $-4n$ is a fourth power.
\end{proposition}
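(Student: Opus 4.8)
The statement I want to prove is a classification: the del Pezzo surface of degree $2$
$$X_n: \quad x^4+y^4-nz^4=w^2 \subset \P(1,1,1,2)$$
admits a conic bundle structure over $\Q$ if and only if one of $n$, $2n$, $-2n$ is a square, or $-4n$ is a fourth power. The natural tool is Lemma \ref{lem:conic_bundle_exists}: over a field of characteristic $0$ with a rational point, $X_n$ has a conic bundle precisely when there is a Galois-invariant collection of disjoint pairs of lines meeting in a single point. So the problem reduces to a Galois-theoretic question about the $56$ lines on $X_n$, whose explicit equations and splitting field $\Q(\zeta,\sqrt[4]{n})$ are already recorded (following Kresch--Tschinkel). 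The plan is therefore: (i) set up the intersection combinatorics of the $56$ lines and identify which pairs $(L,L')$ meet in a single point, grouping them into the $28$ ``bitangent pairs''; (ii) translate the existence of a Galois-invariant configuration of disjoint such pairs into explicit divisibility/square conditions on $n$; (iii) verify the four listed conditions each produce such a configuration, and conversely that no other $n$ can.

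\textbf{Carrying it out.} First I would record the intersection numbers among the lines $L_{h,\delta,\pm}$ and $L_{\alpha,\beta,\gamma}$ well enough to locate the pairs meeting in exactly one point (the pairs with intersection number $1$, forming singular fibres), as opposed to the Geiser-dual pairs with intersection number $2$. Second, and this is the crux, I would compute the action of $\Gal(\Q(\zeta,\sqrt[4]{n})/\Q)$ on the set of lines. The generators act by $\zeta\mapsto\zeta^j$ and $\sqrt[4]{n}\mapsto i^k\sqrt[4]{n}$, and their effect on the explicit line equations is a permutation that I can read off directly. The degeneracies in the generically degree-$16$ Galois group occur exactly when $\Q(\zeta,\sqrt[4]{n})$ is smaller than expected, i.e.\ when some relation holds among $\zeta$ and $\sqrt[4]{n}$; these relations are governed by when $n$, $2n$, $-2n$, or $-n$ becomes a square (or $-4n$ a fourth power, using $\sqrt[4]{-4}=\zeta\cdot(\text{rational})$ type identities). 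Third, for each of the four arithmetic conditions I would exhibit a concrete Galois-invariant collection of disjoint line-pairs and invoke Lemma \ref{lem:conic_bundle_exists}; conversely, for generic $n$ (none of the conditions holding) I would argue that the full degree-$16$ Galois action is transitive enough on the relevant configurations that no invariant disjoint collection of single-point-meeting pairs exists, so no conic bundle can arise.

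\textbf{The main obstacle.} The hard part is the converse direction: showing that \emph{no} conic bundle exists outside the four listed cases. This requires a careful orbit analysis of the Galois action on the set of ``singular-fibre pairs'' of lines, checking that in the generic (degree $16$) case every candidate invariant configuration is forced to contain a pair of lines that is not disjoint from its Galois conjugates (so the pairs cannot be assembled into disjoint fibres). I would organize this by parametrizing all ways to partition a subset of the $56$ lines into disjoint single-intersection pairs, computing the stabilizer conditions each imposes, and showing these stabilizer conditions are satisfiable only when the splitting field degenerates — which happens precisely along $n,2n,-2n$ square or $-4n$ a fourth power. The forward direction (exhibiting the bundles) is comparatively mechanical once the line equations and Galois action are in hand; indeed the explicit conic bundle in the square cases should specialize to the fibrations written down later in Proposition \ref{prop:conic_bundle_equation}. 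I would also double-check the subtle equivalence between the condition ``$-n$ is a square'' (which one might naively expect by symmetry) and the others, since the remark following Theorem \ref{thm:Fermat} flags that $-n$ a square is exactly the case with \emph{no} conic bundle, so the asymmetry between $+$ and $-$ signs in the defining equation must be handled with care.
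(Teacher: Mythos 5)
Your reduction of the problem is sound, and your existence direction matches the paper's: both rely on Lemma \ref{lem:conic_bundle_exists} together with the explicit Kresch--Tschinkel line equations, with the actual invariant collections of lines exhibited later (in the paper, inside the proof of Proposition \ref{prop:conic_bundle_equation}). The genuine gap is in the non-existence direction, which you correctly identify as the crux but then leave unexecuted: the claim that for generic $n$ ``the full degree-$16$ Galois action is transitive enough'' that no invariant disjoint collection of single-point-meeting pairs can exist is precisely what needs proof, and your proposed method --- enumerating all partitions of subsets of the $56$ lines into disjoint intersection-one pairs and analysing stabiliser conditions --- is a very large combinatorial computation that is nowhere carried out. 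Worse, that argument is structurally unable to cover the hardest cases: when $-n$ is a square the splitting field $\Q(\zeta,\sqrt[4]{n})$ has degree $<16$, so no ``maximal transitivity'' is available, yet these are exactly the delicate cases ($n=-4m^4$ has a conic bundle, while $n=-m^4$, and more generally $-n$ a square that does not become a fourth power in $\Q(\zeta)$, do not). You flag this asymmetry but offer no mechanism to resolve it.

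The paper's route through the converse is quite different and much lighter. Non-existence is detected by the single invariant $\rank\Pic X_n$: a conic bundle fibre class $C$ satisfies $C^2=0$ and $-K_X\cdot C=2$, so it cannot exist when $\rank\Pic X_n=1$. Moreover, since the Galois action on $\Pic \bar{X}_n$ depends only on the ``type'' of the splitting field, each type needs only one representative computation: Magma gives $\rank\Pic X_3=1$ for the generic degree-$16$ type, and $\rank\Pic X_{-1}=\rank\Pic X_{-9}=1$ for the two degenerate types inside the case $-n$ square, while the elementary kernel computations $\ker(\Q^\times/\Q^{\times 2}\to\Q(\zeta)^\times/\Q(\zeta)^{\times 2})=\{\pm 1,\pm 2\}$ and $\ker(\Q^\times/\Q^{\times 4}\to\Q(\zeta)^\times/\Q(\zeta)^{\times 4})=\{\pm 1,\pm 4\}$ sort all $n$ into these finitely many types. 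To complete your proposal you should either import this Picard-rank-of-representatives argument, or commit to carrying out the full orbit analysis separately in every degenerate Galois type, not only the generic one.
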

\begin{proof}
    We begin by excluding the cases without a conic bundle structure.
    First assume that the Galois action is maximal, i.e.~that $[\Q(\zeta, \sqrt[4]{n}): \Q] = 16$. Since the Galois action on $\Pic \bar{X}_n$ is the same for any such surface, it suffices
    to demonstrate \textit{one} surface without a conic bundle structure.  
    The Magma code from \cite{Kresch2004On2} yields $\rank \Pic X_3 = 1$, so this surface has no a conic bundle structure.

    Now assume that $[\Q(\zeta, \sqrt[4]{n}): \Q]< 16$. This happens if and only if $n$ becomes a square in $\Q(\zeta)$. Since $\ker(\Q^\times/\Q^{\times 2} \to \Q(\zeta)^{\times}/\Q(\zeta)^{\times 2}) = \{ \pm 1, \pm 2\}$, this happens if and only if one of $\pm n, \pm 2 n$ is a square.

    First consider the case where $-n$ is a square. We subdivide into the cases depending on whether $-n$ becomes a square or a fourth power in $\Q(\zeta)$.
    Since $\ker(\Q^\times/\Q^{\times 4} \to \Q(\zeta)^{\times}/\Q(\zeta)^{\times 4}) 
    = \{ \pm 1, \pm 4\}$, we see that $-n$ becomes a fourth power in $\Q(\zeta)$ if and only if $-n$ or $-4n$ is a fourth power. However if $-n$ is fourth power, or $-n$ is a square but not a fourth power, there is no conic bundle structure, as follows from the fact that $\rank \Pic X_{-1} = \rank \Pic X_{-9} = 1$.

    Therefore it remains to show that a conic bundle exists in the cases in the statement of the proposition. By Lemma \ref{lem:conic_bundle_exists} it suffices to find suitable Galois invariant collections of lines on $X_n$. We briefly postpone this verification to Proposition \ref{prop:conic_bundle_equation},
    where we perform a detailed study of the conic bundle structures in question.
\end{proof}

We now write down explicitly the conic bundle structures in Proposition \ref{prop:conic_bundle_Fermat}. Let $X$ be a del Pezzo surface of degree $2$ with a conic bundle structure $X \to \P^1$. Combining with the anticanonical map $X \to \P^2, (x,y,z,w) \mapsto (x,y,z)$ yields an embedding $X \to \P^1 \times \P^2$ as a surface of bidegree $(2,2)$ (see \cite[\S5.5.2]{Frei2018RationalSurfaces} for more general descriptions of conic bundles on del Pezzo surfaces). This embedding gives a clearer way to visualise the conic bundle structure. In the statement of the next proposition, the formula for the conic bundle gives a priori only a rational map; that it extends to a morphism follows from Lemma \ref{lem:conic_bundle_exists}. We include the factorisation of the determinant of the corresponding quadratic form; this gives the locus of the singular fibres and is easy to calculate from the conic bundle equation.

\begin{proposition} \label{prop:conic_bundle_equation}
    The surfaces in $X_n$ from Proposition \ref{prop:conic_bundle_Fermat} admit the conic bundle morphism $\pi: X_n \to \P^1$ with induced image in $\P^1 \times \P^2$, given by Table \ref{tab:conic_bundles}.
    \begin{table}[htbp]
\renewcommand{\arraystretch}{1.75}
    \begin{tabular}{|c|p{13cm}|}
        \hline
        \textbf{Case} & \textbf{Conic bundle, equation in $\P^1\times \P^2$, and determinant} \\
        \hline
        \multirow{3}{*}{$n = -4m^4$} & $\pi: X_n \to \P^1, \quad  (x,y,z,w) \mapsto (x^2 + 2mxz + 2m^2z^2, w -y^2 )$\\ \cline{2-2}
                                     & \textbf{Equation:} $(s^2 - t^2)x_0^2 - 2stx_1^2 - 2m(s^2 + t^2)x_0x_2 + 2m^2(s^2 - t^2)x_2^2= 0$ \\ \cline{2-2}
                                     & \textbf{Determinant:} $-2m^{2}st(s^{2} - 2st - t^{2})(s^{2} + 2st - t^{2})$\\
        \hline

        \multirow{3}{*}{$n = m^2$} & $\pi: X_n \to \P^1, \quad (x,y,z,w) \mapsto  (x^2 - mz^2, w -y^2)$ \\ \cline{2-2}
                                   & \textbf{Equation:} $(s^2 - t^2)x_0^2 - 2stx_1^2 + m(s^2 + t^2) x_2^2= 0$\\ \cline{2-2}
                                   & \textbf{Determinant:} $-2mst(s - t)(s+t)(s^2 + t^2)$\\
        \hline

        \multirow{3}{*}{$n = 2m^2$} & $\pi: X_n \to \P^1, \quad (x,y,z,w) \mapsto  (xy-mz^2, x^2-y^2-w)$\\ \cline{2-2}
                                    & \textbf{Equation:} $ 2stx_0^2 + (2s^2-t^2)x_0x_1  - 2st x_1^2 + m(2s^2 + t^2)x_2^2 = 0$\\ \cline{2-2}
                                    & \textbf{Determinant:} $-\tfrac{m}{4}(2s^2+t^2)(4s^4+12s^2t^2+t^4)$\\
        \hline
        \multirow{3}{*}{$n = -2m^2$} & $\pi: X_n \to \P^1, \quad (x,y,z,w) \mapsto  (xy-mz^2,x^2+y^2-w)$ \\ \cline{2-2}
                                     & \textbf{Equation:} $ 2stx_0^2 - (2s^2+t^2)x_0x_1 + 2stx_1^2 +m(2s^2-t^2)x_2^2= 0$\\ \cline{2-2}
                                     & \textbf{Determinant:} $-\tfrac{m}{4}(2s^2-t^2)(2s^2-4st+t^2)(2s^2+4st+t^2)$\\
        \hline
    \end{tabular}
    \caption{Conic bundles for the surfaces $X_n$ in Proposition \ref{prop:conic_bundle_Fermat}.}
    \label{tab:conic_bundles}
\end{table}
\end{proposition}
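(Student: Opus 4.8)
The plan is to verify the four cases by a single uniform computation, treating the stated formula for $\pi$ as a candidate and checking directly that it defines a conic bundle with the claimed image and singular locus. Throughout I identify the $\P^2$-coordinates with the anticanonical coordinates, so $(x_0:x_1:x_2) = (x:y:z)$, and I write $\pi = (f_0 : f_1)$ with $f_0 = f_0(x,y,z)$ the $w$-free quadratic and $f_1$ the factor linear in $w$ read off from Table \ref{tab:conic_bundles}. The starting point in each case is a factorisation identity for the surface equation: for $n = m^2$ one has $(w-y^2)(w+y^2) = (x^2-mz^2)(x^2+mz^2)$; for $n = -4m^4$ the Sophie Germain identity gives $(w-y^2)(w+y^2) = x^4 + 4m^4 z^4 = (x^2 - 2mxz + 2m^2z^2)(x^2 + 2mxz + 2m^2 z^2)$; and for $n = \pm 2m^2$ one has $(x^2 \mp y^2 - w)(x^2 \mp y^2 + w) = \pm 2(xy - mz^2)(xy + mz^2)$. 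In every case this exhibits a relation $f_0 g_0 = f_1 g_1$ with $g_0, g_1$ the conjugate factors, whence $(f_0 : f_1) = (g_1 : g_0)$ away from $\{f_0 = 0\}$. I would first use this to show that $\pi$ extends to a morphism, by checking that the two base loci $\{f_0 = f_1 = 0\}$ and $\{g_0 = g_1 = 0\}$ are disjoint on $X_n$ — a short computation using the surface equation (for $n = m^2$, say, the only common solution forces $x=y=z=w=0$). Alternatively, existence of the morphism is guaranteed by Lemma \ref{lem:conic_bundle_exists}, the singular fibres found below furnishing the required Galois-invariant collections of lines.

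To obtain the equation of the image under the embedding $X_n \hookrightarrow \P^1\times\P^2$, I would work on a fibre: over $(s:t)$ the relation $t f_0 = s f_1$ is linear in $w$ with leading coefficient $\pm 1$, so I solve it for $w$ as a rational function with denominator $s$, substitute into $w^2 = x^4 + y^4 - nz^4$, and clear denominators. This produces a form $G$ of bidegree $(2,4)$ in $((s:t),(x:y:z))$. The key structural observation is that $f_0$ divides $G$: on the locus $\{f_0 = 0\}$ the solved value of $w$ becomes $w$-free and the factorisation identity forces the surface equation to hold, so $G|_{f_0=0} \equiv 0$. Since $\deg f_0 = 2$ in $(x,y,z)$ and $f_0$ is constant in $(s:t)$, the quotient $Q := G/f_0$ has bidegree $(2,2)$; this is precisely the conic bundle equation, and a direct expansion matches the relevant row of Table \ref{tab:conic_bundles} in each case (the quotient being the genuine fibre conic, the spurious factor $f_0$ recording the locus $s=0$ lost in the division).

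Finally, regarding $Q$ as a ternary quadratic form in $(x_0,x_1,x_2)$ and computing the determinant of its associated symmetric matrix (halving the coefficients of the cross terms), the cases $n = m^2$ and $n = \pm 2m^2$ reduce to an essentially $2\times 2$ evaluation after splitting off the $x_2$-block, while $n = -4m^4$ expands along the middle row to $-2st \cdot m^2(s^4 - 6s^2t^2 + t^4)$, the quartic factoring as $(s^2 - 2st - t^2)(s^2 + 2st - t^2)$ via $(s^2-t^2)^2 - (2st)^2$. In each case the determinant factors exactly as stated, with six simple roots; these are the six singular fibres expected of a conic bundle on a del Pezzo surface of degree $2$ by Lemma \ref{lem:conic_bundle_facts}. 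Since the determinant is not identically zero, the generic fibre is a smooth conic, so $\pi$ is indeed a conic bundle. This simultaneously completes the existence claim postponed from the proof of Proposition \ref{prop:conic_bundle_Fermat}.

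The computations are routine but delicate, and the main obstacle is organisational rather than conceptual. The genuinely non-obvious ingredient is spotting the factorisation identities that turn $\pi$ into a morphism — in particular the Sophie Germain factorisation for $n = -4m^4$ and the sign-sensitive identities for $n = \pm 2m^2$ — since these are exactly what force the eliminant $G$ to be divisible by $f_0$ and hence to collapse from bidegree $(2,4)$ to the desired $(2,2)$ conic. Care is also needed in the two non-diagonal cases ($n = -4m^4$ and $n = \pm 2m^2$), where the cross term in $Q$ contributes to the determinant and must be tracked with the correct factor of $\tfrac12$.
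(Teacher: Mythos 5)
Your verification strategy is sound, and it is a genuinely different route from the paper's. The paper \emph{derives} the formulas rather than checking them: for $n=-4m^4$ and $n=m^2$ it exhibits a singular fibre $F=L+L'$ defined over $\Q$ and reads the map off from $\H^0(X_n,F)=\langle 1,(w-y^2)/f_0\rangle$; for $n=\pm 2m^2$, where no singular fibre is rational, it works with the divisor $F+F^\sigma$, descends a basis of $\H^0(X_n,F+F^\sigma)$ to $\Q$ using $f\cdot f^\sigma=1$, projects from the image of the rational point $(0,1,0,1)$ to extract the class $(F+F^\sigma)/2$, and then identifies the quadratic-polynomial representation via the dual-class relation $-2K_X=C+C'$; the morphism property is quoted from Lemma \ref{lem:conic_bundle_exists}, and the image equations are produced by Magma. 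Your route---gluing $(f_0:f_1)$ with the conjugate chart over disjoint base loci, then eliminating $w$ fibrewise and splitting off the factor $f_0$---replaces all of this by hand computation: it proves the morphism property directly, yields the image and determinant without Magma, and exhibits the six singular fibres, which indeed also settles the existence claim postponed from Proposition \ref{prop:conic_bundle_Fermat}. What it does not (and need not) do is explain where the formulas come from. Two points to tighten: the identities for $n=\pm 2m^2$ read $(x^2\mp y^2)^2-w^2=\mp 2(xy-mz^2)(xy+mz^2)$, so the second chart is $(g_1:\mp 2g_0)$ rather than $(g_1:g_0)$ (harmless, but the constant must be carried); and you should record why each fibre is \emph{isomorphic}, not merely birational, to $\{Q=0\}$, namely that on the fibre $w$ is recovered as a polynomial in $(x,y,z)$ from the linear-in-$w$ fibre equation, giving the inverse morphism---the paper's definition of conic bundle requires this of every fibre.

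One concrete warning: your claim that the expansion ``matches the relevant row of Table \ref{tab:conic_bundles} in each case'' fails for $n=-2m^2$ (the other three rows do come out as stated, up to an immaterial overall sign). Carrying out your elimination for $n=-2m^2$ gives
\[
2st\,x_0^2-(2s^2+t^2)x_0x_1+2st\,x_1^2-m(2s^2-t^2)x_2^2=0,
\]
whose $x_2^2$-coefficient has the \emph{opposite} sign to the table's row. The discrepancy is real and lies in the table, under the paper's own identification of the $\P^2$-factor with the anticanonical coordinates $(x:y:z)$: the integral point $(x,y,z,w)=(47,39,1,2682)$ on $X_{-2}$ from Table \ref{tab:solutions} maps under the stated $\pi$ to $(s:t)=(229:131)$, and it satisfies the displayed equation but not the table's, which evaluates there to $2m(2s^2-t^2)\neq 0$. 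In fact the table's equation is the image of the companion bundle $(xy+mz^2:x^2+y^2-w)$, i.e.\ of the stated map with $m$ replaced by $-m$ (which defines the same surface $X_{-2m^2}$). So on that row you should flag the sign inconsistency rather than assert a literal match; nothing downstream in the paper is affected, since both signs give the same restriction to the boundary $x_2=0$ and the same vanishing locus of the determinant.
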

\begin{proof}
    We achieve the descriptions by turning the proof of Lemma \ref{lem:conic_bundle_exists} into an effective method. Note  that $X_n(\Q) \neq \emptyset$ since they contain
    the rational point $(0,1,0,1)$, as necessary for Lemma \ref{lem:conic_bundle_exists}.
    The first step will be to write down Galois invariant collections of lines which meet in pairs and are otherwise disjoint.

    \textbf{Case $n = -4m^4$: } We consider the equation
$$x^4 + y^4 = w^2 - 4m^4z^4.$$
    Using the factorisation $x^4 + 4mz^4 = (x^2 + 2mxz + 2m^2z^2)(x^2 - 2mxz + 2m^2z^2)$ we find that this surface has the following Galois conjugate lines
\begin{align*}
    L: & \quad x - (1 - i)m z = 0, \quad w - y^2 = 0, \\
    L': & \quad x - (1 + i)m z = 0, \quad w - y^2 = 0,
\end{align*}
which form the singular fibre $F= L + L'$ of a conic bundle defined over $\Q$. We calculate $\H^0(X_n,F)$ by recalling that an element of this cohomology group can be represented by a rational function $f$ such that $\mathrm{div} f + F \geq 0$. This gives $\H^0(X_n,F) = \langle 1 , (w -y^2)/(x^2 + 2mxz + 2m^2z^2) \rangle$, hence the conic bundle morphism is as stated in the proposition. One easily calculates that it has the given image (we found the equation using Magma \cite{Magma}).

    \textbf{Case $n = m^2$:} We consider the equation
$$x^4 + y^4 = w^2 + m^2z^4.$$
We have the following Galois conjugate lines
\begin{align*}
    L:  & \quad  x - \sqrt{m} z = 0, \quad w - y^2 = 0, \\
    L': & \quad x + \sqrt{m} z = 0, \quad w - y^2 = 0.
\end{align*}
The calculation is then similar to the previous case and gives the stated result.

\textbf{Case $n = 2m^2$:} This case is more challenging as there is no singular fibre defined over $\Q$. It turns out that we will be able to write down lines over the field $\Q(\zeta)$ where $\zeta = e^{\pi i /4}$ is a primitive $8$th root of unity. 
We write the generators of the Galois group $\Gal(\Q(\zeta)/\Q)$ as 
\begin{equation}
    \tau: \zeta \mapsto \zeta^3, \quad \sigma: \zeta \mapsto \zeta^5.
\end{equation}
We have the equation
\[ x^4 + y^4 = w^2 +  2m^2z^4. \]
The relevant Galois orbit of lines is given by
\begin{align} \label{eqn:lines}
\begin{split}
    L: \quad & \zeta x + y = 0, \quad w + \sqrt{-2}m z^2 = 0, \\
    L^\sigma:\quad & \zeta^5 x + y = 0, \quad w - \sqrt{-2}m z^2 = 0, \\
    L^\tau:\quad & \zeta^3 x + y = 0, \quad w + \sqrt{-2} m z^2 = 0, \\    
    L^{\sigma \tau}:\quad & \zeta^7 x + y = 0, \quad w - \sqrt{-2} m z^2 = 0.
\end{split}
\end{align}
Here $F=L+L^\tau$ is a singular fibre of the conic bundle, this time defined over $\Q(\sqrt{-2})$, with conjugate singular fibre $F^\sigma = L^\sigma + L^{\sigma \tau}$. The divisor $F + F^\sigma$ is defined over $\Q$ and we need to calculate its cohomology. We first note that $\mathrm{H}^0(X_n,F) = \langle 1, f \rangle$ and $\mathrm{H}^0(X_n,F^\sigma) = \langle 1, f^\sigma \rangle$ as $\Q(\sqrt{-2})$-vector spaces, with
\[ f:= \dfrac{(\zeta^5x+y)(\zeta^7x+y)}{w+\sqrt{-2}mz^2} = \dfrac{-x^2-\sqrt{-2}xy + y^2}{w+\sqrt{-2}mz^2}, \]
where the equality holds because of $\zeta+\zeta^3 = \sqrt{-2}$ (and hence $\zeta^5 + \zeta^7 = -\sqrt{-2}$).

We have that $\mathrm{H^0}(X_n, F+F^\sigma) \otimes \Q(\sqrt{-2}) = \langle 1,f, f^\sigma \rangle$. We require a basis over $\Q$. To find this, from the equation of $X_n$ we get that $f\cdot f^\sigma=1$ and so $f^\sigma= f^{-1}$. Thus $f+f^\sigma$ and $\sqrt{-2}(f-f^\sigma)=(\zeta+\zeta^3)(f+\zeta^4f^\sigma)$ are defined over $\Q$ and linearly independent. Indeed,
\begin{align*}
f+f^\sigma &= -2\dfrac{wx^2-wy^2+2mxyz^2}{w^2+2m^2z^4}, \\
\sqrt{-2}(f-f^\sigma)&= 4\dfrac{wxy-m(x^2-y^2)z^2}{w^2 + 2m^2z^2}.
\end{align*}
Therefore the map $X_n \to \P^2$ induced by $|F+F^\sigma|$ is 
$$ (x,y,z,w) \longmapsto ( w^2 + 2m^2z^4, -2(wy^2-wx^2-2mxyz^2), 4(wxy-m(x^2-y^2)z^2)). $$
This is not the conic bundle, we want to find the map given by the divisor class $(F + F^\sigma)/2$. To do this, we note that the image in $\P^2$ is a plane conic. We want to choose an isomorphism of this conic with $\P^1$. To achieve this we use that our surfaces have a rational point given by $(0,1,0,1)$. The image of this rational point is $(1,2,0)$. The projection from this rational point is the rational map
\begin{equation}\label{eq:projection}
    \P^2 \dashrightarrow \P^1, \quad (x_0,x_1,x_2) \mapsto (2x_0 - x_1, x_2).
\end{equation}
So we compose with the map \eqref{eq:projection} and we get
\[ \begin{matrix}
    X_n & \dashrightarrow & \P^1\\
    (x,y,z,w) & \mapsto & (w^2 + 2m^2z^4 + wy^2 -wx^2 - 2mxyz^2, 2(wxy-m(x^2-y^2)z^2)).
\end{matrix} \]
This is now the conic bundle. However we would like this rational map to be defined by quadratic polynomials; we find these as follows. The quartic polynomials give elements of $\H^0(X_n, -4K_S)$. Recall from Lemma \ref{lem:Str2conic} that we have $-2K_S = C + C'$ for the dual conic bundle class $C'$, hence $-4K_S = 2C + 2C'$. The base locus of the linear system generated by the given quartic polynomials contains the lines \eqref{eqn:lines}, which are linearly equivalent to $2C$. The base locus contains a different conic with equation $xy-mz^2 = x^2-y^2-w =0$, which thus has class $C'$. Thus if we consider the linear system generated by $xy-mz^2=0$ and $x^2-y^2-w =0$, we obtain a sub linear system of $|-2K_S|$ whose base locus contains a conic with class $C'$. Using the relation $-2K_S = C + C'$ we thus find that this linear system is the same as $|C|$, as required. We asked Magma to compute the image of the induced map to $\P^1 \times \P^2$, and it gives the equation as stated in the proposition.

\textbf{Case $n = -2m^2$:} This case is very similar to the previous case so we shall we be brief. We have the equation
\[ X_n: \, x^4 +y^4 = w^2 - 2m^2 z^4\]
with Galois orbits of lines
\begin{align*}
    L &: \quad \zeta x + y = 0, \quad w + \sqrt{2}m z^2 = 0, \\
    L^\sigma&:\quad \zeta^5 x + y = 0, \quad w - \sqrt{2}m z^2 = 0, \\
    L^\tau&: \quad \zeta^3 x + y = 0, \quad w - \sqrt{2} m z^2 = 0, \\    
    L^{\sigma \tau}&:  \quad \zeta^7 x + y = 0, \quad w + \sqrt{2} m z^2 = 0.
\end{align*}
Taking $F=L+L^{\sigma\tau}$ we have $F^{\sigma} = L^{\sigma}+L^{\tau}$ and $\mathrm{H}^0(X_n,F) = \langle 1, f \rangle$ and $\mathrm{H}^0(X_n, F^\sigma) = \langle 1, f^\sigma \rangle$ as a $\Q(\sqrt{2})$--vector space, where
\[ f = \dfrac{(\zeta^5 x + y)(\zeta^3x + y)}{w-\sqrt{2}mz^2} = \dfrac{x^2 - \sqrt{2}xy + y^2}{w - \sqrt{2}mz^2}, \]
\[ f^\sigma = \dfrac{(\zeta x+ y)(\zeta^{7} x+y)}{w+\sqrt{2}mz^2} = \dfrac{x^2 + \sqrt{2}xy + y^2}{w + \sqrt{2}mz^2}.\]
Again we get that $f\cdot f^\sigma = 1$ from the equation of $X_n$ and so $f^\sigma = f^{-1}$. Thus
\begin{align*}
f+ f^\sigma &= \dfrac{2(wx^2 + wy^2 -2mxyz^2)}{w^2-2m^2z^4}, \\
\sqrt{2}(f^\sigma-f) & = \dfrac{4(m(x^2+y^2)z^2-wxy)}{w^2-2m^2z^4}
\end{align*}
are defined over $\Q$ and we find that the map associated to $F+F^\sigma$ is
$$ (x,y,z,w) \longmapsto ( w^2 - 2m^2z^4, 2(wx^2+ wy^2 -2mxyz^2 ), 4(m(x^2 + y^2)z^2 - wxy)). $$
Again composing with the projection from the same rational point, we get
\[ \begin{matrix}
    X_n & \dashrightarrow & \P^1\\
    (x,y,z,w) & \mapsto & (w^2-2m^2z^4 + 2mxyz^2 - wx^2 - wy^2, 2(m(x^2 + y^2)z^2 - wxy)).
\end{matrix} \]
The base locus of this contains the conic $xy-mz^2 =x^2+y^2-w = 0$, thus we obtain the map and image of the induced map to $\P^1 \times \P^2$ stated in the proposition via a Magma calculation.
\end{proof}

\begin{remark} \label{rem:non_rational}
    One can show using the formulae for the Picard rank of a conic bundle \cite[Lem.~2.1]{Frei2018RationalSurfaces} that if one of $n,2n,-2n$ is a square, but $n$ is not a fourth power, then $\rank \Pic X_n = 2$. It follows by a theorem of Iskovskih \cite[Thm.~4]{Iskovskih1980MinimalFields} that such surfaces are minimal, in particular non-rational. Therefore there is no way to prove Theorem \ref{thm:Fermat} by performing a birational transformation to a del Pezzo surface of larger degree.
\end{remark}

\subsection{Proof of Theorem \ref{thm:Fermat}}
By Theorem \ref{thm:dP}, it suffices to show that our conditions correspond to an integral point which lies on a smooth conic that meets the boundary divisor in a real quadratic point and such that the dual conic bundle is unramified. We use Proposition \ref{prop:conic_bundle_equation}, which gives explicit descriptions for the conic bundle structures for the surfaces in Theorem \ref{thm:Fermat}, which we denote by $\pi_1$ with dual conic bundle $\pi_2$ obtained via the Geiser involution. For $n=1, -4$ one checks that the points given in Table \ref{tab:solutions} have this property. So we assume for the remainder of the proof that we are not in any of these cases, up to a fourth power. With respect to the conic bundle model, the boundary divisor is given by $D: x_2 = 0$. From Table~\ref{tab:conic_bundles}, one easily deduces that the discriminant of $\pi_1|_D :D \to \P^1$ is given as:

\begin{table}[htb] 
    \centering
    \begin{tabular}{c|c|c|c} 
        $n$ & $m^2$ & $2m^2$ & $-2m^2$   \\  \hline
         & $8st(s - t)(s+t)$ & $4s^4+12s^2t^2+t^4$ & $(2s^2-4st+t^2)(2s^2+4st+t^2)$
    \end{tabular}
    \caption{Discriminant of $\pi_1|_D :D \to \P^1$}
    \label{tab:discriminant}
\end{table}

\begin{lemma} \hfill \label{lem:Fermat}
	\begin{enumerate}
		\item Any integer solution lies on a smooth fibre.
		\item The only smooth fibres which meet the boundary divisor in at least one rational point are for $n = \pm 2m^2$ and $s= 0$ or $t = 0$.
        \item For every smooth fibre $C$ for $\pi_1$, the map $\pi_2|_D :D \to \P^1$ is unramified along $C \cap D$.
	\end{enumerate}
\end{lemma}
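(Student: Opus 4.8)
The plan is to treat separately the three remaining cases $n = m^2$, $n = 2m^2$ and $n = -2m^2$ (recall we have reduced to $n$ not a fourth power, so in the first case $m$ is not a perfect square), using throughout the explicit conic bundle $\pi_1$ and its discriminant $\Delta(s,t)$ recorded in Table~\ref{tab:conic_bundles}, together with the fact that the dual $\pi_2$ is obtained from $\pi_1$ by the Geiser involution $\iota\colon w \mapsto -w$. I will also use that $D$ is the curve $x^4 + y^4 = w^2$ in $\P(1,1,2)$, whose only rational points are the four ``trivial'' ones $(1:0:\pm 1)$ and $(0:1:\pm 1)$, by Fermat's theorem that $x^4 + y^4 = w^2$ has no nontrivial solution.

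For part (1), the fibre through a point $P$ is singular exactly when $\pi_1(P)$ is a zero of $\Delta$. When $n = 2m^2$ the factor $4s^4 + 12 s^2 t^2 + t^4$ is strictly positive for real $(s,t) \neq (0,0)$, and when $n = -2m^2$ the factors of $\Delta$ vanish only at the irrational slopes $t/s \in \{\pm\sqrt 2,\, 2 \pm \sqrt 2,\, -2 \pm \sqrt 2\}$; in both cases $\Delta$ has no rational zero, so since $\pi_1$ is defined over $\Q$ every rational point — in particular every integer solution — maps into $\P^1(\Q)$ and hence lies on a smooth fibre. When $n = m^2$ the discriminant does have the rational zeros $(0:1)$, $(1:0)$, $(1:\pm 1)$, so here I would substitute the corresponding conditions $s = 0$, $t = 0$, $s = \pm t$ into the defining equation; a short manipulation shows each forces one of $x^2 = m$, $y^2 = m$, $y^2 = -m$, none of which has an integer solution since $m$ is not a perfect square and $-m < 0$.

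For part (2), a smooth fibre meets $D$ in a rational point precisely when it passes through one of the four points of $D(\Q)$ and the corresponding value of $\pi_1$ is a smooth value. I would therefore evaluate $\pi_1$ at each of the four points, resolving the base-point indeterminacy of the formula in Table~\ref{tab:conic_bundles} by passing to the second chart supplied by the factorisation identity used in Proposition~\ref{prop:conic_bundle_equation}. For $n = m^2$ all four points land on the singular values found in part (1), so no smooth fibre meets $D$ rationally; for $n = \pm 2m^2$ they land on $(0:1)$ and $(1:0)$, which $\Delta$ shows are smooth, giving exactly the fibres $s = 0$ and $t = 0$ of the statement.

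Part (3) is where the real content lies. The key identity is $\pi_2|_D = \pi_1|_D \circ \iota_D$, where $\iota_D$ is the restriction of the Geiser involution to $D$; since $\iota_D$ is an automorphism this gives $\mathrm{Ram}(\pi_2|_D) = \iota_D\big(\mathrm{Ram}(\pi_1|_D)\big)$. A smooth fibre $C = \pi_1^{-1}(P_0)$ meets $D$ in a reduced scheme iff $P_0$ is not a branch point of the degree-$2$ map $\pi_1|_D$, i.e.\ iff $C \cap D$ avoids $\mathrm{Ram}(\pi_1|_D)$; so it suffices to prove $\iota_D\big(\mathrm{Ram}(\pi_1|_D)\big) = \mathrm{Ram}(\pi_1|_D)$, for then $C \cap D$ also avoids $\mathrm{Ram}(\pi_2|_D)$, which is exactly the unramifiedness asserted. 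The main obstacle is this last stability statement, which I expect to establish by computing $\mathrm{Ram}(\pi_1|_D)$ directly: a Riemann--Hurwitz count gives four ramification points, and a short differentiation shows that the ramification locus is cut out on $D$ by a binary form in $x,y$ alone ($xy = 0$ when $n = m^2$, $x^2 + y^2 = 0$ when $n = 2m^2$, and $x^2 - y^2 = 0$ when $n = -2m^2$). Such a locus is manifestly invariant under $w \mapsto -w$, so $\iota_D$ permutes the four ramification points and the stability — hence part (3) — follows. For $n = m^2$ this is especially transparent, since the deck involution of $\pi_1|_D$ is $x \mapsto -x$, which commutes with $\iota_D$.
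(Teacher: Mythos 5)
Your proof is correct. For parts (1) and (2) it runs essentially parallel to the paper's: both rest on Fermat's theorem that $D\colon x^4+y^4=w^2$ has only the four obvious rational points, and on the absence of rational zeros of the discriminant when $n=\pm 2m^2$; the only divergence is part (1) for $n=m^2$, where the paper notes that the four singular fibres are non-split conics whose unique rational points have $x_2=0$ and so lie on the boundary, while you substitute $s=0$, $t=0$, $s=\pm t$ and factor; both work. The substantive difference is part (3), where your argument is in fact more complete than the paper's. The paper's proof observes $\iota(D)=D$, deduces that a fibre $C_1$ meets $D$ non-reducedly iff $\iota(C_1)$ does, and calls the claim immediate. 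That equivalence, however, only gives equality of the branch loci of $\pi_1|_D$ and $\pi_2|_D$ in $\P^1$ (equivalently, that $\iota(C)\cap D$ is reduced); the claim concerns ramification of $\pi_2|_D$ at the points of $C\cap D$, and the $\pi_2$-fibres through those points are in general not $\iota(C)$. Since two degree-$2$ maps from an elliptic curve to $\P^1$ can have the same branch locus yet different ramification loci (compose one with translation by a non-$2$-torsion point), the missing ingredient is exactly the statement you isolate as the main obstacle: that $\mathrm{Ram}(\pi_1|_D)$ is $\iota_D$-stable, equivalently $\mathrm{Ram}(\pi_1|_D)=\mathrm{Ram}(\pi_2|_D)$. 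Your verification of this is correct: the deck involutions of $\pi_1|_D$ are $(x:y:w)\mapsto(-x:y:w)$, $(x:y:w)\mapsto(y:-x:-w)$ and $(x:y:w)\mapsto(y:x:w)$ in the cases $n=m^2$, $n=2m^2$, $n=-2m^2$ respectively, with fixed loci $xy=0$, $x^2+y^2=0$, $x^2-y^2=0$ on $D$, each cut out by a form in $x,y$ alone and hence invariant under $w\mapsto -w$ (indeed each of these involutions commutes with $\iota$). So your proposal both proves the lemma and supplies the computation that the paper's ``immediate'' leaves implicit; what the paper's phrasing buys is brevity and the conceptual emphasis on $\iota$-invariance of $D$, but, as your analysis shows, that invariance alone is not sufficient.
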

\begin{proof}
 (1) For the cases $n = \pm 2m^2$, this is because there is no singular fibre over $\Q$. For the case $n = m^2$, the description from Proposition \ref{prop:conic_bundle_equation} reveals that there are $4$ singular fibres over $\Q$, and that these singular fibres are all non-split. It follows that the only rational point on a singular fibre is the intersection of two lines. However none of these define integral points since they all have $x_2 = 0$.
 
 (2) The boundary divisor is the elliptic curve $D: x^4 + y^4 = w^2$, whose only rational points are $(1:0:\pm1)$ and $(0:1:\pm 1)$. This corresponds to the boundary divisor $x_2 = 0$ in the conic bundle model. Therefore it suffices to find the $4$ rational points on this curve and see where they lie. When $n = m^2$ all these rational points are the singular points of the singular fibres, so do not lie on a smooth fibre. For $n = \pm 2m^2$ the $4$ rational points occur when $s = 0$ or $t = 0$, as required.

 (3) Our boundary divisor $D$ has the special property that it is preserved under the action of the Geiser involution $\iota: w \mapsto -w$. One checks that $\iota$ acts on $D$ via:
 \begin{table}[htb] 
    \centering
    \begin{tabular}{c|c|c|c} 
        $n$ & $m^2$ & $2m^2$ & $-2m^2$   \\  \hline
        $(s,t) \mapsto $ & $(t,-s)$ & $(t,-2s)$ & $(t,2s)$
    \end{tabular}
  \end{table} 

  Let $R_{i} \subset D$ denote the ramification locus of $\pi_i|_D: D \to \P^1$, viewed as a closed subscheme of degree $4$; we have $\iota(R_1) = R_2$. 
  In each case one checks that the action preserves the discriminant of $\pi_1|_D$ as in Table \ref{tab:discriminant}. It follows that $\iota$ preserves $R_1$, thus $R_1 = R_2$. However it also follows from Tables \ref{tab:conic_bundles} and \ref{tab:discriminant} that $D$ ramifies only above the singular fibres of $\pi_1$, thus the same holds for $\pi_2$ since $\iota$ preserves intersection multiplicities and maps singular fibres to singular fibres.
  
Now assume for a contradiction that $C$ is a smooth fibre for $\pi_1$ such that $C \cap R_2 \neq \emptyset$. It follows that $\iota(C) \cap R_1 \neq \emptyset$. However from the above we know that it is only the singular fibres of $\pi_2$ which pass through $R_1 = R_2$. It follows that $\iota(C)$ is singular, thus $C$ is singular, which is a contradiction.
\end{proof}

Lemma \ref{lem:Fermat} shows that, away from the fibres over $st=0$ in the case $n = \pm 2m^2$, any integral point lies on a smooth conic which meets the boundary divisor in a closed point of degree two which is an unramified point with respect to $\pi_2$. It remains to verify that the real conditions in Theorem~\ref{thm:Fermat} are now exactly asking that the residue field of this point is real quadratic.
However this is now clear from Table~\ref{tab:discriminant}, noting that for $n = 2m^2$ the discriminant $4s^4 + 12s^2t^2 + t^4$ is positive definite, and for $n = -2m^2$ the discriminant expands to $4s^4 -12 s^2t^2 + t^4$. \qed 

\begin{remark}
    By Lemma \ref{lem:Fermat}, the trivial solutions presented in Remark \ref{rem:trivial} exactly correspond to integral points which lie on a conic that meets the divisor $D$ in $2$ rational points. Such conics have only finitely many integral points (Lemma \ref{lem:pointsatinftyconic}), so are useless for our method.
\end{remark}

\bibliographystyle{alpha}
\bibliography{references}

@article {Coccia2024ASurfaces,
    AUTHOR = {Coccia, Simone},
     TITLE = {A {H}ilbert irreducibility theorem for integral points on del
              {P}ezzo surfaces},
   JOURNAL = {Int. Math. Res. Not},
  FJOURNAL = {International Mathematics Research Notices. IMRN},
      YEAR = {2024},
    NUMBER = {6},
     PAGES = {5005--5049},
      ISSN = {1073-7928},
   MRCLASS = {14G05 (14J26)},
  MRNUMBER = {4721049},
MRREVIEWER = {Nguyen Manh Linh},
       DOI = {10.1093/imrn/rnad189},
       URL = {https://doi.org/10.1093/imrn/rnad189},
}

@unpublished{Eliashar2023An=4,
    title = {{A note on Fermat's Last Theorem for $n=4$}},
    year = {2023},
    author = {Eliashar, Matan and Linial, Nati},
    url = {http://arxiv.org/abs/2302.02378},
    NOTE = {\href{http://arxiv.org/abs/2302.02378}{arxiv:2302.02378}},
    arxivId = {2302.02378}
}

@article{Alvanos2009CharacterizingPoints,
    AUTHOR = {Alvanos, Paraskevas and Bilu, Yuri and Poulakis, Dimitrios},
     TITLE = {Characterizing algebraic curves with infinitely many integral
              points},
   JOURNAL = {Int. J. Number Theory},
  FJOURNAL = {International Journal of Number Theory},
    VOLUME = {5},
      YEAR = {2009},
    NUMBER = {4},
     PAGES = {585--590},
      ISSN = {1793-0421},
   MRCLASS = {11G30 (11D45)},
  MRNUMBER = {2532274},
MRREVIEWER = {Konstantinos Draziotis},
       DOI = {10.1142/S1793042109002274},
       URL = {https://doi.org/10.1142/S1793042109002274},
}

@article{Demeio2021EllipticProperty,
    AUTHOR = {Demeio, Julian L.},
     TITLE = {Elliptic fibrations and the {H}ilbert property},
   JOURNAL = {Int. Math. Res. Not.},
  FJOURNAL = {International Mathematics Research Notices. IMRN},
      YEAR = {2021},
    NUMBER = {13},
     PAGES = {10260--10277},
      ISSN = {1073-7928},
   MRCLASS = {14G05 (14J27 14J28)},
  MRNUMBER = {4283578},
MRREVIEWER = {Sajad Salami},
       DOI = {10.1093/imrn/rnz108},
       URL = {https://doi.org/10.1093/imrn/rnz108},
}

@article{Streeter2021HilbertVarieties,
    AUTHOR = {Streeter, Sam},
     TITLE = {Hilbert property for double conic bundles and del {P}ezzo
              varieties},
   JOURNAL = {Math. Res. Lett.},
  FJOURNAL = {Mathematical Research Letters},
    VOLUME = {28},
      YEAR = {2021},
    NUMBER = {1},
     PAGES = {271--283},
      ISSN = {1073-2780},
   MRCLASS = {14G05 (14J26)},
  MRNUMBER = {4248003},
MRREVIEWER = {M. Miyanishi},
       DOI = {10.4310/MRL.2021.v28.n1.a11},
       URL = {https://doi.org/10.4310/MRL.2021.v28.n1.a11},
}

@article{Iskovskih1980MinimalFields,
    AUTHOR = {Iskovskih, V. A.},
     TITLE = {Minimal models of rational surfaces over arbitrary fields},
   JOURNAL = {Izv. Akad. Nauk SSSR Ser. Mat.},
  FJOURNAL = {Izvestiya Akademii Nauk SSSR. Seriya Matematicheskaya},
    VOLUME = {43},
      YEAR = {1979},
    NUMBER = {1},
     PAGES = {19--43, 237},
      ISSN = {0373-2436},
   MRCLASS = {14J10 (14M20)},
  MRNUMBER = {525940},
MRREVIEWER = {Miles Reid},
}

@article{Demeio2020Non-rationalProperty,
    AUTHOR = {Demeio, Julian L.},
     TITLE = {Non-rational varieties with the {H}ilbert property},
   JOURNAL = {Int. J. Number Theory},
  FJOURNAL = {International Journal of Number Theory},
    VOLUME = {16},
      YEAR = {2020},
    NUMBER = {4},
     PAGES = {803--822},
      ISSN = {1793-0421},
   MRCLASS = {14G05 (11D25 14J27 14J28 14M20)},
  MRNUMBER = {4093384},
MRREVIEWER = {Roberto Alvarenga},
       DOI = {10.1142/S1793042120500414},
       URL = {https://doi.org/10.1142/S1793042120500414},
}

@article{Kresch2004On2,
    AUTHOR = {Kresch, Andrew and Tschinkel, Yuri},
     TITLE = {On the arithmetic of del {P}ezzo surfaces of degree 2},
   JOURNAL = {Proc. London Math. Soc. (3)},
  FJOURNAL = {Proceedings of the London Mathematical Society. Third Series},
    VOLUME = {89},
      YEAR = {2004},
    NUMBER = {3},
     PAGES = {545--569},
      ISSN = {0024-6115},
   MRCLASS = {14G25 (14F22)},
  MRNUMBER = {2107007},
MRREVIEWER = {Timothy D. Browning},
       DOI = {10.1112/S002461150401490X},
       URL = {https://doi.org/10.1112/S002461150401490X},
}

@article{Corvaja2017OnVarieties,
    title = {{On the Hilbert property and the fundamental group of algebraic varieties}},
    year = {2017},
    journal = {Math. Zeitschrift},
    author = {Corvaja, Pietro and Zannier, Umberto},
    number = {1-2},
    volume = {286},
    doi = {10.1007/s00209-016-1775-x},
    issn = {14321823}
}

@article{Frei2018RationalSurfaces,
    title = {{Rational points of bounded height on general conic bundle surfaces}},
    year = {2018},
    journal = {Proc. London Math. Soc.},
    author = {Frei, Christopher and Loughran, Daniel and Sofos, Efthymios},
    number = {2},
    volume = {117},
    doi = {10.1112/plms.12134},
    issn = {1460244X}
}

@article{Coccia2019TheSurfaces,
    title = {{The Hilbert Property for integral points of affine smooth cubic surfaces}},
    year = {2019},
    journal = {J. Number Theory},
    author = {Coccia, Simone},
    month = {7},
    pages = {353--379},
    volume = {200},
    publisher = {Academic Press Inc.},
    doi = {10.1016/j.jnt.2018.11.024},
    issn = {0022314X},
    keywords = {Cubic surfaces, Diophantine geometry, Fermat cubic surface, Hilbert Property, Integral points, Thin sets}
}

@article {HB92,
    AUTHOR = {Heath-Brown, D. R.},
     TITLE = {The density of zeros of forms for which weak approximation
              fails},
   JOURNAL = {Math. Comp.},
  FJOURNAL = {Mathematics of Computation},
    VOLUME = {59},
      YEAR = {1992},
    NUMBER = {200},
     PAGES = {613--623},
      ISSN = {0025-5718},
   MRCLASS = {11G35 (11D25 11P55)},
  MRNUMBER = {1146835},
MRREVIEWER = {R. C. Baker},
       DOI = {10.2307/2153078},
       URL = {https://doi.org/10.2307/2153078},
}

@article {Boo19,
    AUTHOR = {Booker, Andrew R.},
     TITLE = {Cracking the problem with 33},
   JOURNAL = {Res. Number Theory},
  FJOURNAL = {Research in Number Theory},
    VOLUME = {5},
      YEAR = {2019},
    NUMBER = {3},
     PAGES = {Paper No. 26, 6},
      ISSN = {2522-0160},
   MRCLASS = {11Y50 (11D25)},
  MRNUMBER = {3983550},
MRREVIEWER = {Yuri Bilu},
       DOI = {10.1007/s40993-019-0162-1},
       URL = {https://doi.org/10.1007/s40993-019-0162-1},
}

@article {BS21,
    AUTHOR = {Booker, Andrew R. and Sutherland, Andrew V.},
     TITLE = {On a question of {M}ordell},
   JOURNAL = {Proc. Natl. Acad. Sci. USA},
  FJOURNAL = {Proceedings of the National Academy of Sciences of the United
              States of America},
    VOLUME = {118},
      YEAR = {2021},
    NUMBER = {11},
     PAGES = {Paper No. 2022377118, 11},
      ISSN = {0027-8424},
   MRCLASS = {11Y50},
  MRNUMBER = {4279690},
MRREVIEWER = {Azizul Hoque},
       DOI = {10.1073/pnas.2022377118},
       URL = {https://doi.org/10.1073/pnas.2022377118},
}

@misc{Tito,
    TITLE = {Does the equation $x^4+y^4+1 = z^2$ have a non-trivial solution?},
    AUTHOR = {Piezas III, Tito},
    HOWPUBLISHED = {Mathematics Stack Exchange},
    NOTE = {\href{https://math.stackexchange.com/q/16887}{https://math.stackexchange.com/q/16887 (version: 2025-03-03)}},
    EPRINT = {https://math.stackexchange.com/q/16887},
    URL = {https://math.stackexchange.com/q/16887}
}

@MISC {MO,
    TITLE = {The diophantine eq. $x^4 +y^4 +1=z^2$},
    AUTHOR = {},
    HOWPUBLISHED = {MathOverflow},
    NOTE = {\href{https://mathoverflow.net/q/61794}{https://mathoverflow.net/q/61794 (version: 2017-04-13)}},
    EPRINT = {https://mathoverflow.net/q/61794},
    URL = {https://mathoverflow.net/q/61794}
}

@article {Har17,
    AUTHOR = {Harpaz, Yonatan},
     TITLE = {Geometry and arithmetic of certain log {K}3 surfaces},
   JOURNAL = {Ann. Inst. Fourier (Grenoble)},
  FJOURNAL = {Universit\'{e} de Grenoble. Annales de l'Institut Fourier},
    VOLUME = {67},
      YEAR = {2017},
    NUMBER = {5},
     PAGES = {2167--2200},
      ISSN = {0373-0956},
   MRCLASS = {14G05 (14F22 14J26 14J28)},
  MRNUMBER = {3732687},
MRREVIEWER = {Valentijn Zo\"{e} Karemaker},
       DOI = {10.5802/aif.3132},
       URL = {https://doi.org/10.5802/aif.3132},
}

@unpublished{Kolllar,
    title = {The {M}ordell-{S}chinzel conjecture for cubic {D}iophantine equations},
    year = {2024},
    author = {Kollár, János and Li, Jennifer},
    url = {http://arxiv.org/abs/2412.12080},
    NOTE = {\href{http://arxiv.org/abs/2412.12080}{arxiv:2412.12080}},
    arxivId = {2412.12080}
}

@incollection{Hassett2001,
    AUTHOR = {Hassett, Brendan and Tschinkel, Yuri},
     TITLE = {Density of integral points on algebraic varieties},
 BOOKTITLE = {Rational points on algebraic varieties},
    SERIES = {Progr. Math.},
    VOLUME = {199},
     PAGES = {169--197},
 PUBLISHER = {Birkh\"{a}user, Basel},
      YEAR = {2001},
   MRCLASS = {14G25 (11G35 14D10 14G05 14J20)},
  MRNUMBER = {1875174},
MRREVIEWER = {Tam\'{a}s Szamuely},
       DOI = {10.1007/978-3-0348-8368-9\_7},
       URL = {https://doi.org/10.1007/978-3-0348-8368-9_7},
}

@article {LS22,
    AUTHOR = {Loughran, Daniel and Salgado, Cec\'{\i}lia},
     TITLE = {Rank jumps on elliptic surfaces and the {H}ilbert property},
   JOURNAL = {Ann. Inst. Fourier (Grenoble)},
  FJOURNAL = {Universit\'{e} de Grenoble. Annales de l'Institut Fourier},
    VOLUME = {72},
      YEAR = {2022},
    NUMBER = {2},
     PAGES = {617--638},
      ISSN = {0373-0956},
   MRCLASS = {14G05 (11G05 14J27)},
  MRNUMBER = {4448605},
MRREVIEWER = {Arman Shamsi Zargar},
       DOI = {10.5802/aif.3457},
       URL = {https://doi.org/10.5802/aif.3457},
}

@article {Shy77,
    AUTHOR = {Shyr, Jih Min},
     TITLE = {A generalization of {D}irichlet's unit theorem},
   JOURNAL = {J. Number Theory},
  FJOURNAL = {Journal of Number Theory},
    VOLUME = {9},
      YEAR = {1977},
    NUMBER = {2},
     PAGES = {213--217},
      ISSN = {0022-314X},
   MRCLASS = {20G30 (10C30 12A45)},
  MRNUMBER = {437650},
MRREVIEWER = {William C. Waterhouse},
       DOI = {10.1016/0022-314X(77)90025-7},
       URL = {https://doi.org/10.1016/0022-314X(77)90025-7},
}

@article {Bou11,
    AUTHOR = {Bourqui, David},
     TITLE = {Fonction z\^{e}ta des hauteurs des vari\'{e}t\'{e}s toriques non
              d\'{e}ploy\'{e}es},
   JOURNAL = {Mem. Amer. Math. Soc.},
  FJOURNAL = {Memoirs of the American Mathematical Society},
    VOLUME = {211},
      YEAR = {2011},
    NUMBER = {994},
      ISSN = {0065-9266},
      ISBN = {978-0-8218-4936-1},
   MRCLASS = {11G35 (11G50 11M41 14G40 14M25)},
  MRNUMBER = {2809202},
MRREVIEWER = {Timothy D. Browning},
       DOI = {10.1090/S0065-9266-2010-00609-4},
       URL = {https://doi.org/10.1090/S0065-9266-2010-00609-4},
}

@incollection{siegel2014einige,
    AUTHOR = {Siegel, Carl L.},
     TITLE = {\"{U}ber einige {A}nwendungen diophantischer {A}pproximationen
              [reprint of {A}bhandlungen der {P}reu\ss ischen {A}kademie der
              {W}issenschaften. {P}hysikalisch-mathematische {K}lasse 1929,
              {N}r. 1]},
 BOOKTITLE = {On some applications of {D}iophantine approximations},
    SERIES = {Quad./Monogr.},
    VOLUME = {2},
     PAGES = {81--138},
 PUBLISHER = {Ed. Norm., Pisa},
      YEAR = {2014},
   MRCLASS = {11G30 (01A75 11Jxx)},
  MRNUMBER = {3330350},
}

@unpublished{FehmJav2025,
    title = {Hilbert properties of varieties},
    year = {2025},
    author = {Fehm, Arno and Javanpeykar, Ariyan},
    url = {https://arxiv.org/abs/2511.18431},
    NOTE = {\href{https://arxiv.org/abs/2511.18431}{arXiv:2511.18431}},
    arxivId = {2511.18431}
}

@unpublished{KolVP,
    title = {Cubic surfaces with infinite, discrete automorphism group},
    year = {2024},
    author = {Kollár, János and Villalobos-Paz, David},
    url = {https://arxiv.org/abs/2410.03934},
    NOTE = {\href{http://arxiv.org/abs/2410.03934}{arxiv:2410.03934}},
    arxivId = {2410.03934}
}

@article {Magma,
    AUTHOR = {Bosma, Wieb and Cannon, John and Playoust, Catherine},
     TITLE = {The {M}agma algebra system. {I}. {T}he user language},
      NOTE = {Computational algebra and number theory (London, 1993)},
   JOURNAL = {J. Symbolic Comput.},
  FJOURNAL = {Journal of Symbolic Computation},
    VOLUME = {24},
      YEAR = {1997},
    NUMBER = {3-4},
     PAGES = {235--265},
      ISSN = {0747-7171},
   MRCLASS = {68Q40},
  MRNUMBER = {MR1484478},
       DOI = {10.1006/jsco.1996.0125},
       URL = {http://dx.doi.org/10.1006/jsco.1996.0125},
}
\end{document}